\newtheorem{thm}{Theorem}
\newtheorem{lem}{Lemma}
\begin{document}

\title{\textbf{\LARGE{High-Dimension, Low Sample Size Asymptotics \\ of Canonical Correlation Analysis}}}
\author{Sungwon Lee, Statistics Department, University of Pittsburgh}
\date{}
\maketitle

\begin{abstract}
An asymptotic behavior of canonical correlation analysis is studied when dimension d grows and the sample size n is fxed. In particular, we are interested in the conditions for which CCA works or fails in the HDLSS situation. This technical report investigates those conditions in a rather simplified setting where there exists one pair of directions in two sets of random variables with non-zero correlation between two sets of scores on them. Proofs and an extensive simulation study supports the findings.
\end{abstract}

\section{Introduction}

Canonical correlation analysis (CCA) introduced in~\cite{23} is a standard statistical tool to explore the relationship between two sets of random variables. Consider $d_{X}$- and $d_{Y}$-dimensional random vectors $X^{(d_{X})}$ and $Y^{(d_{Y})}$,
\begin{align*}
\left(X^{(d_{X})}\right)^{T}=
\begin{bmatrix}
X_{1}, &X_{2}, &\dots, &X_{d_{X}}
\end{bmatrix}, \,\, \left(Y^{(d_{Y})}\right)^{T}=
\begin{bmatrix}
Y_{1}, &Y_{2}, &\dots, &Y_{d_{Y}}
\end{bmatrix}.
\end{align*}
CCA first seeks a pair of $d_{X}$- and $d_{Y}$-dimensional weights vectors $\psi^{(d_{X})}_{X1}$ and $\psi^{(d_{Y})}_{Y1}$ such that two random variables, one being the linear combination of $X_{1}, X_{2}, \dots, X_{d_{X}}$ weighted by the elements of $\psi^{(d_{X})}_{X1}$ and the other being that of $Y_{1}, Y_{2}, \dots, Y_{d_{X}}$ weighted by the elements of $\psi^{(d_{Y})}_{Y1}$, have a maximal correlation,
\begin{align}
\label{sec2:equ1}
(\psi^{(d_{X})}_{X1}, \psi^{(d_{Y})}_{Y1})=\underset{\text{Var}(\langle \psi^{(d_{X})}_{X1}, X^{(d_{X})} \rangle)=\text{Var}(\langle \psi^{(d_{Y})}_{Y1}, Y^{(d_{Y})} \rangle)=1}{\text{argmax}} \text{Cov}(\langle \psi^{(d_{X})}_{X1}, X^{(d_{X})} \rangle, \langle \psi^{(d_{Y})}_{Y1}, Y^{(d_{Y})} \rangle).
\end{align}
Requiring the norms of the weight vectors $\psi^{(d_{X})}_{X1}$ and $\psi^{(d_{Y})}_{Y1}$ to be one, the equation~(\ref{sec2:equ1}) can be written as an equivalent form of,
\begin{align}
\label{sec2:equ2}
(\psi^{(d_{X})}_{X1}, \psi^{(d_{Y})}_{Y1})=\underset{\| \psi^{(d_{X})}_{X1} \|_{2}=\| \psi^{(d_{Y})}_{Y1}\|_{2}=1}{\text{argmax}} \frac{\text{Cov}(\langle \psi^{(d_{X})}_{X1}, X^{(d_{X})} \rangle, \langle \psi^{(d_{Y})}_{Y1}, Y^{(d_{Y})} \rangle)}{\sqrt{\text{Var}(\langle \psi^{(d_{X})}_{X1}, X^{(d_{X})} \rangle)} \sqrt{\text{Var}(\langle \psi^{(d_{Y})}_{Y1}, Y^{(d_{Y})} \rangle)}}.
\end{align}
For convenience, denote the objective function in the right hand side of~(\ref{sec2:equ2}) by $\rho_{P}(\psi^{(d_{X})}, \psi^{(d_{Y})})$,
\begin{align*}
&\rho: R^{d_{X}} \times R^{d_{Y}} \mapsto R\\
&\rho_{P}(\psi^{(d_{X})}, \psi^{(d_{Y})})=\frac{\text{Cov}(\langle \psi^{(d_{X})}, X^{(d_{X})} \rangle, \langle \psi^{(d_{Y})}, Y^{(d_{Y})} \rangle)}{\sqrt{\text{Var}(\langle \psi^{(d_{X})}, X^{(d_{X})} \rangle)} \sqrt{\text{Var}(\langle \psi^{(d_{Y})}, Y^{(d_{Y})} \rangle)}}.
\end{align*}
Subsequent weights vectors $\psi^{(d_{X})}_{Xi}$ and $\psi^{(d_{Y})}_{Yi}$, for $i=1,2,\dots,\text{min}(d_{X}, d_{Y})$, are found by maximizing the objective function $\rho_{P}(\psi^{(d_{X})}, \psi^{(d_{Y})})$,
\begin{align*}
(\psi^{(d_{X})}_{Xi}, \psi^{(d_{Y})}_{Yi})=\underset{\| \psi^{(d_{X})}_{Xi} \|_{2}=\| \psi^{(d_{Y})}_{Yi}\|_{2}=1}{\text{argmax}} \rho_{P}(\psi^{(d_{X})}_{Xi}, \psi^{(d_{Y})}_{Yi}), \,\, i=1,2,\dots,\text{min}(d_{X}, d_{Y}),
\end{align*}
under the constraint that,
\begin{align*}
\text{Cov}(\langle \psi^{(d_{X})}_{Xi}, X^{(d_{X})} \rangle, \langle \psi^{(d_{X})}_{Xj}, X^{(d_{X})} \rangle) &= \text{Cov}(\langle \psi^{(d_{Y})}_{Yi}, Y^{(d_{Y})} \rangle, \langle \psi^{(d_{Y})}_{Yj}, Y^{(d_{Y})} \rangle)\\
&= \text{Cov}(\langle \psi^{(d_{X})}_{Xi}, X^{(d_{X})} \rangle, \langle \psi^{(d_{Y})}_{Yj}, Y^{(d_{Y})} \rangle)\\
&= \text{Cov}(\langle \psi^{(d_{Y})}_{Yi}, Y^{(d_{Y})} \rangle, \langle \psi^{(d_{X})}_{Xj}, X^{(d_{X})} \rangle)\\
&= 0, \,\, i=1,2,\dots,\text{min}(d_{X}, d_{Y}), \,\, j = 1,2,.., i-1 \,\, \text{for each} \,\, i.
\end{align*}
The $i$th pair of weight vectors $\psi^{(d_{X})}_{Xi}$ and $\psi^{(d_{Y})}_{Yi}$ are usually called the $i$th pair of canonical weight vectors (or canonical loadings). The correlation $\rho$ evaluated at the $i$th pair $\psi^{(d_{X})}_{Xi}$ and $\psi^{(d_{Y})}_{Yi}$, denoted by $\rho_{i}^{(d_{X}, d_{Y})}$, is called the $i$th canonical correlation coefficient, that is, $\rho_{i}^{(d_{X}, d_{Y})}=\rho_{P}(\psi^{(d_{X})}_{Xi}, \psi^{(d_{Y})}_{Yi})$.

In practice, we collect two sets of obervations of $d_{X}$- and $d_{Y}$-dimensional random vectors $X^{(d_{X})}$ and $Y^{(d_{Y})}$ on a common set of samples in a $d_{X} \times n$ matrix $\mathbf{X}^{(d_{X})}$ and a $d_{Y} \times n$ matrix $\mathbf{Y}^{(d_{Y})}$, respectively. We row-center $\mathbf{X}^{(d_{X})}$ and $\mathbf{X}^{(d_{X})}$ and let $\hat{\mathbf{\Sigma}}^{(d_{X})}_{X}, \hat{\mathbf{\Sigma}}^{(d_{Y})}_{Y}$ and $\hat{\mathbf{\Sigma}}^{(d_{X},d_{Y})}_{XY}$ be a covariance matrix of $X^{(d_{X})}$, a covariance matrix of $Y^{(d_{Y})}$ and a cross-covariance matrix of $X^{(d_{X})}$ and $Y^{(d_{Y})}$,
\begin{align*}
\hat{\mathbf{\Sigma}}^{(d_{X})}_{X}=\frac{1}{n}\mathbf{X}^{(d_{X})} \left(  \mathbf{X}^{(d_{X})} \right)^{T}, \,\, \hat{\mathbf{\Sigma}}^{(d_{Y})}_{Y}=\frac{1}{n}\mathbf{Y}^{(d_{Y})} \left( \mathbf{Y}^{(d_{Y})} \right)^{T}, \,\, \hat{\mathbf{\Sigma}}^{(d_{X},d_{Y})}_{XY}=\frac{1}{n}\mathbf{X}^{(d_{X})} \left( \mathbf{Y}^{(d_{Y})} \right)^{T}.
\end{align*}
For the case where the sample size $n$ is greater than $d_{X}$ and $d_{Y}$, the estimation of sample canonical weight vectors $(\hat{\psi}^{(d_{X})}_{Xi}, \hat{\psi}^{(d_{Y})}_{Yi})$ and sample canonical correlation coefficients $\hat{\rho}_{i}^{(d_{X}, d_{Y})}$ are done through singular value decomposition of the matrix $\hat{\mathbf{R}}^{(d_{X}, d_{Y})}$, 
\begin{align}
\label{sec2:equ3}
\begin{aligned}
\hat{\mathbf{R}}^{(d_{X},d_{Y})}&= \left(\hat{\mathbf{\Sigma}}^{(d_{X})}_{X}\right)^{-\frac{1}{2}} \hat{\mathbf{\Sigma}}^{(d_{X},d_{Y})}_{XY} \left(\hat{\mathbf{\Sigma}}^{(d_{Y})}_{Y}\right)^{-\frac{1}{2}},\\
\text{SVD}(\hat{\mathbf{R}}^{(d_{X},d_{Y})})&=\sum_{i=1}^{\text{min}(d_{X}, d_{Y})} \hat{\lambda}_{Ri}^{(d_{X}, d_{Y})} \hat{\eta}_{RXi}^{(d_{X})} \left ( \hat{\eta}_{RYi}^{(d_{Y})} \right )^{T},
\end{aligned}
\end{align}
where $\hat{\lambda}_{Ri}^{(d)}$ is a sample singular value with $\hat{\lambda}_{R1}^{(d)} \ge \hat{\lambda}_{R2}^{(d)} \ge \dots \ge \hat{\lambda}_{R\text{min}(d_{X}, d_{Y})}^{(d)} \ge 0$, and $(\hat{\eta}_{RXi}^{(d_{X})}, \hat{\eta}_{RYi}^{(d_{Y})})$ is a pair of left and right sample singular vectors corresponding to $\hat{\lambda}_{Ri}^{(d)}$. Then, the $i$th sample canonical correlation coefficient $\hat{\rho}_{i}^{(d)}$ is found to be,
\begin{align*}
\hat{\rho}_{i}^{(d_{X}, d_{Y})}=\hat{\lambda}_{Ri}^{(d_{X}, d_{Y})}.
\end{align*}
The $i$th pair of canonical weight vectors $\hat{\psi}_{Xi}^{(d_{X})}$ and $\hat{\psi}_{Yi}^{(d_{Y})}$ are obtained by unscaling and normalzing the $i$th pair of sample singular vectors $\hat{\eta}_{RXi}^{(d_{X})}$ and $\hat{\eta}_{RYi}^{(d_{Y})}$,
\begin{align}
\label{sec2:equ4}
\hat{\psi}_{Xi}^{(d_{X})}=\frac{\left(\hat{\mathbf{\Sigma}}^{(d_{X})}_{X}\right)^{-\frac{1}{2}}\hat{\eta}_{RXi}^{(d_{X})}}{\left\| \left(\hat{\mathbf{\Sigma}}^{(d_{X})}_{X}\right)^{-\frac{1}{2}}\hat{\eta}_{RXi}^{(d_{X})} \right\|_{2}}, \,\, \hat{\psi}_{Yi}^{(d_{Y})}=\frac{\left(\hat{\mathbf{\Sigma}}^{(d_{Y})}_{Y}\right)^{-\frac{1}{2}}\hat{\eta}_{RYi}^{(d_{Y})}}{\left\| \left(\hat{\mathbf{\Sigma}}^{(d_{Y})}_{Y}\right)^{-\frac{1}{2}}\hat{\eta}_{RYi}^{(d_{Y})} \right\|_{2}}.
\end{align}
The projection of the data matrix $\mathbf{X}^{(d_{X})}$ onto the $i$th sample canonical weight vector $\hat{\psi}_{Xi}^{(d_{X})}$ gives the canonical scores (or canonical variables) of $\mathbf{X}^{(d_{X})}$ with respect to $\hat{\psi}_{Xi}^{(d_{X})}$ and similarly for $\mathbf{X}^{(d_{X})}$. Although powerful, CCA has several disadvantages. first, use of CCA is practically restricted to the case of two sets of data even if there is an attempt to generalize it to more than two sets of data~\cite{28}. Second, CCA components are estimable only if the sample size $n$ is greater than $d_{X}$ and $d_{Y}$. It is well know that, when $n < \text{max}(d_{X}, d_{Y})$, one can construct an infinite number of sample canonical weight vector pairs with their correlation of one. Moreover, overfitting is often a problem even when $n > d_{X}$ and $d_{Y}$. Hence, CCA is often considered not relible in high-dimensional data sets. We, however, will show that, even in the case where sample size $n$ is less than $d_{X}$ or $d_{Y}$, some sample canonical weight vectors is estimable and furthremore consistent under a certain condition. 

As high-dimensional data are increasingly common these days, where a large number of variables are measured for each object, there is a strong need to investigate the behavior of estimates resulting from the application of standard statistical tools such as CCA to a high-dimensional case (that is, scalability of those tools). Literature in the HDLSS asymptotic study of CCA is very limited, while the behavior of PCA components under the similar high-dimensional condition is well-studied in~\cite{37,38}. This might be in part because CCA is not as widely used as PCA, which is almost an indispensible tool for dimension reduction of high-dimensional data prevalent these days, and in part due to the complicated estimation steps involving an inverse operator as in~(\ref{sec2:equ3}), which makes the analysis not straightforward. 

A relevant work is first addressed in~\cite{36}, where the asymptotic behavior of sample singular vectors and singular values are analysed under a HDLSS setting. In~\cite{35}, the similar study of CCA is elaborated on, but their proof should have considered the fact that an infinite sum of quantities converging to zero does not neccessarily approach to zero. The HDLSS asymptotic behavior of CCA components in this chapter will be studied in relatively a simple population structure and serves as a groundwork for further analysis.

\section{Assumptions and Definitions}

Without loss of generality for the case where the dimensions of two random vectors $X^{(d_{X})}$ and $Y^{(d_{Y})}$ grow in a sense that $d_{X}/d_{Y} \to 1$, we set $d_{X}=d_{Y}$ and consider two random vectors $X^{(d)}$ and $Y^{(d)}$ of a same dimension with mean zero. We assume that covariance structure of $X^{(d)}$ and $Y^{(d)}$ follows a simple spiked model as in~\cite{32}, where the leading eigenvalues of their covariance matrix is considerably larger than the rest. In specific, let $\mathbf{\Sigma}^{(d)}_{X}$ and $\mathbf{\Sigma}^{(d)}_{Y}$ be the covariance matrices of $X^{(d)}$ and $Y^{(d)}$. Then, a spiked model can be easily understood via eigendecomposition of $\mathbf{\Sigma}^{(d)}_{X}$ and $\mathbf{\Sigma}^{(d)}_{Y}$, 
\begin{align}
\label{sec3:equ6}
\mathbf{\Sigma}^{(d)}_{X}=\sum_{i=1}^{d} \lambda_{Xi}^{(d)} \xi_{Xi}^{(d)} \left ( \xi_{Xi}^{(d)} \right )^{T}, \,\, \mathbf{\Sigma}^{(d)}_{Y}=\sum_{j=1}^{d} \lambda_{Yj}^{(d)} \xi_{Yj}^{(d)} \left ( \xi_{Yj}^{(d)} \right )^{T},
\end{align}
where $\lambda_{Xi}^{(d)}$ is an polpulation eigenvalue (or population PC variance) with $\lambda_{X1}^{(d)} \ge \lambda_{X2}^{(d)} \ge \dots \ge \lambda_{Xd}^{(d)} \ge 0$, $\xi_{Xi}^{(d)}$ is an population eigenvector (or population PC direction) with $\| \xi_{Xi}^{(d)} \|_{2}=1$ and $\langle \xi_{Xi}^{(d)}, \xi_{Xj}^{(d)} \rangle=0$ for $i \not= j$ and similarly for $\lambda_{Yj}^{(d)}$ and $\xi_{Yj}^{(d)}$. Here, we set,
\begin{align}
\label{sec3:equ7}
\begin{aligned}
\lambda_{X1}^{(d)}&=\sigma_{X}^{2}d^{\alpha} \,\, \text{and} \,\, \lambda_{Xi}^{(d)}=\tau_{X}^{2} \,\, \text{for} \,\, i=2,3,\dots, d,\\
\lambda_{Y1}^{(d)}&=\sigma_{Y}^{2}d^{\alpha} \,\, \text{and} \,\, \lambda_{Yj}^{(d)}=\tau_{Y}^{2} \,\, \text{for} \,\, j=2,3,\dots, d,
\end{aligned}
\end{align}
where one sees that the leading eigenvalues $\lambda_{X1}^{(d)}$ and $\lambda_{Y1}^{(d)}$ become dominating the rest as $d \to \infty$. We now set up the population canonical components. We assume that the two random vector is related by a pair of canonical weight vectors with its canonical correlation coefficient of $\rho$. The population canonical weight vector $\psi_{X}^{(d)}$ in the $X^{(d)}$ part is a linear combination of two eigenvectors $\xi_{X1}^{(d)}$ and $\xi_{X2}^{(d)}$ without loss of generality ($\xi_{X2}^{(d)}$ can be replaced with $\xi_{Xi}^{(d)}$ for any $i$) and similarly for the other population canonical weight vector $\psi_{Y}^{(d)}$ in the $Y^{(d)}$ part,
\begin{align}
\label{sec3:equ13}
\psi_{X}^{(d)}=\cos \theta_{X} \xi_{X1}^{(d)} + \sin \theta_{X} \xi_{X2}^{(d)}, \,\, \psi_{Y}^{(d)}=\cos \theta_{Y} \xi_{Y1}^{(d)} + \sin \theta_{Y} \xi_{Y2}^{(d)}.
\end{align}
Note that the angle between $\psi_{X}^{(d)}$ and $\xi_{X1}^{(d)}$ is $\theta_{X}$ and that the angle between $\psi_{Y}^{(d)}$ and $\xi_{Y1}^{(d)}$ is $\theta_{Y}$ as $\langle \psi_{X}^{(d)}, \xi_{X1}^{(d)} \rangle=\text{cos}\theta_{X}$ and $\langle \psi_{Y}^{(d)}, \xi_{Y1}^{(d)} \rangle=\text{cos}\theta_{Y}$. At this point, we apply the change of basis to the spaces of $X^{(d)}$ and $Y^{(d)}$ so that the eigenvectors $\{ \xi_{Xi}^{(d)} \}_{i=1}^{d}$ and $\{ \xi_{Yi}^{(d)} \}_{j=1}^{d}$ are represented by the standard basis $\{e_{i}^{(d)}\}_{k=1}^{d}$. Then, the canonical weight vectors $(\psi_{X}^{(d)}, \psi_{Y}^{(d)})$ given in~(\ref{sec3:equ13}) is rewritten as,
\begin{align*}
\psi_{X}^{(d)}=\cos \theta_{X} e_{1}^{(d)} + \sin \theta_{X} e_{2}^{(d)}, \,\, \psi_{Y}^{(d)}=\cos \theta_{Y} e_{1}^{(d)} + \sin \theta_{Y} e_{2}^{(d)},
\end{align*}
and the covariance structures given in~(\ref{sec3:equ6}) and~(\ref{sec3:equ7}) are described as,
\begin{align}
\label{sec3:equ12}
\mathbf{\Sigma}_{X}^{(d)}=\underset{d \times d}{\text{diag}}(\sigma_{X}^{2}d^{\alpha},\,\, \tau_{X}^{2},\,\, \tau_{X}^{2},\,\, \dots,\,\, \tau_{X}^{2}),\,\, \mathbf{\Sigma}_{Y}^{(d)}=\underset{d \times d}{\text{diag}}(\sigma_{Y}^{2}d^{\alpha},\,\, \tau_{Y}^{2},\,\, \tau_{Y}^{2},\,\, \dots,\,\, \tau_{Y}^{2}),
\end{align}
where $\text{diag}(\bullet)$ is a square matrix with entries of $\bullet$ in the main diagonal and 0 off of it. With these population covariance structures and canonical components, the multivariate version of the corallory~\ref{cor1} gives the cross-covariance structure of $X^{(d)}$ and $Y^{(d)}$ as follows,
\begin{align}
\label{sec3:equ8}
\mathbf{\Sigma}_{XY}^{(d)} =
\begin{bmatrix}
\frac{\rho \sigma_{X}^{2} \sigma_{Y}^{2} d^{2\alpha} \text{cos}\theta_{X} \text{cos}\theta_{Y}}{AB} &\frac{\rho \sigma_{X}^{2} d^{\alpha} \tau_{Y}^{2} \text{cos}\theta_{X} \text{sin}\theta_{Y}}{AB} &\underset{1 \times (d-2)}{\mathbf{0}}\\
\frac{\rho \tau_{X}^{2} \sigma_{Y}^{2} d^{\alpha} \text{sin}\theta_{X} \text{cos}\theta_{Y}}{AB} &\frac{\rho \tau_{X}^{2} \tau_{Y}^{2} \text{sin}\theta_{X} \text{sin}\theta_{Y}}{AB} &\underset{1 \times (d-2)}{\mathbf{0}}\\
\underset{(d-2) \times 1}{\mathbf{0}} &\underset{(d-2) \times 1}{\mathbf{0}} &\underset{(d-2) \times (d-2)}{\mathbf{0}}
\end{bmatrix},
\end{align}
where
\begin{align*}
A=\sqrt{\sigma_{X}^{2} d^{\alpha} \text{cos}^{2} \theta_{X} + \tau_{X}^{2} \text{sin}^{2} \theta_{X}}, \,\, B=\sqrt{\sigma_{Y}^{2} d^{\alpha} \text{cos}^{2} \theta_{Y} + \tau_{Y}^{2} \text{sin}^{2} \theta_{Y}}.
\end{align*}

Then the covariance and cross-covariance structure of $X^{(d)}$ and $Y^{(d)}$ is succintly described by the covariance structure of the concatenated random vector $T^{(2d)}$,
\begin{align}
\label{sec3:equ9}
T^{(2d)}=
\begin{bmatrix}
X^{(d)}\\
Y^{(d)}
\end{bmatrix}, \,\,
\mathbf{\Sigma}_{T}^{(2d)}=
\begin{bmatrix}
\mathbf{\Sigma}_{X}^{(d)} &\mathbf{\Sigma}_{XY}^{(d)}\\
\left( \mathbf{\Sigma}_{XY}^{(d)} \right)^{T} &\mathbf{\Sigma}_{Y}^{(d)}
\end{bmatrix}.
\end{align}
To make the analysis a bit easy, we are going to work with a different representation of $X^{(d)}$ and $Y^{(d)}$. Let $Z^{(d)}$ be the $2d$-dimensional standard normal random vector. Then, $T^{(2d)}$ can be expressed as,
\begin{align}
\label{sec3:equ10}
T^{(2d)}=
\begin{bmatrix}
X^{(d)}\\
Y^{(d)}
\end{bmatrix}=\left(\mathbf{\Sigma}_{T}^{(2d)} \right)^{\frac{1}{2}} Z^{(2d)}, \,\, Z^{(2d)} \sim N \left( \underset{2d \times 1}{0}, \underset{2d \times 2d}{\mathbf{I}} \right).
\end{align}

We state some definitions used in the estimation. Since the dimensionality $d$ is much larger than the sample size $n$ in the HDLSS setting, the estimation step~(\ref{sec2:equ3}) of canonical components is problematic as the sample covariance matrices $\hat{\mathbf{\Sigma}}^{(d)}_{X}$ and $\hat{\mathbf{\Sigma}}^{(d)}_{Y}$ are singular. There are two ways to handle this singularity situation. The first one is to add a minute perturbation of $\epsilon \mathbf{I}$ for a small $\epsilon > 0$ to $\hat{\mathbf{\Sigma}}^{(d)}_{X}$ and $\hat{\mathbf{\Sigma}}^{(d)}_{Y}$ and the second is to use a pseudoinverse such as Moore-Penrose pseudoinverse. We use the pseudoinverse obtained from the eigendecomposition of the sample covariance matrices,
\begin{align}
\label{sec3:equ11}
\hat{\mathbf{\Sigma}}^{(d)}_{X}=\sum_{i=1}^{n} \hat{\lambda}_{Xi}^{(d)} \hat{\xi}_{Xi}^{(d)} \left ( \hat{\xi}_{Xi}^{(d)} \right )^{T}, \,\, \hat{\mathbf{\Sigma}}^{(d)}_{Y}=\sum_{j=1}^{n} \hat{\lambda}_{Yj}^{(d)} \hat{\xi}_{Yj}^{(d)} \left ( \hat{\xi}_{Yj}^{(d)} \right )^{T},
\end{align}
where $\hat{\lambda}_{Xi}^{(d)}$ is an sample eigenvalue (or sample PC variance) with $\hat{\lambda}_{X1}^{(d)} \ge \hat{\lambda}_{X2}^{(d)} \ge \dots \ge \hat{\lambda}_{Xd}^{(n)} \ge 0$, $\hat{\xi}_{Xi}^{(d)}$ is an sample eigenvector (or sample PC direction) with $\| \hat{\xi}_{Xi}^{(d)} \|_{2}=1$ and $\langle \hat{\xi}_{Xi}^{(d)}, \hat{\xi}_{Xj}^{(d)} \rangle=0$ for $i \not= j$ and similarly for $\hat{\lambda}_{Yj}^{(d)}$ and $\hat{\xi}_{Yj}^{(d)}$. The pseudoinverse we employ is defined as,
\begin{align}
\label{sec3:equ5}
\left(\hat{\mathbf{\Sigma}}^{(d)}_{X}\right)^{-1}=\sum_{i=1}^{n} \left(\hat{\lambda}_{Xi}^{(d)}\right)^{-1} \hat{\xi}_{Xi}^{(d)} \left ( \hat{\xi}_{Xi}^{(d)} \right )^{T}, \,\, \left(\hat{\mathbf{\Sigma}}^{(d)}_{Y}\right)^{-1}=\sum_{j=1}^{d} \left(\hat{\lambda}_{Yj}^{(d)}\right)^{-1} \hat{\xi}_{Yj}^{(d)} \left ( \hat{\xi}_{Yj}^{(d)} \right )^{T}.
\end{align}
Then, the sample canonical correlation coefficient $\hat{\rho}_{i}^{(d)}$ is found as an $i$th sample singular value from the SVD of the matrix $\hat{R}^{(d)}$ defined in~(\ref{sec2:equ4}). The sample canonical weight vectors $\hat{\psi}_{Xi}^{(d)}$ and $\hat{\psi}_{Yi}^{(d)}$ corresponding to $\hat{\rho}_{i}^{(d)}$ are obtained from~(\ref{sec2:equ4}) using the pseudoinverses~(\ref{sec3:equ5}).

The success and failure of CCA can be described by the consistency of the sample canonical weight vectors $\hat{\psi}_{X}^{(d)}$ and $\hat{\psi}_{Y}^{(d)}$ with their population counterpart $\psi_{X}^{(d)}$ and $\psi_{Y}^{(d)}$ under the limiting operation of $d \to \infty$ and $n$ fixed. Using the angle as a measure of consistency, we say that $\hat{\psi}_{X}^{(d)}$ (similarly $\hat{\psi}_{Y}^{(d)}$) is,
\begin{itemize}
\item Consistent with $\psi_{X}^{(d)}$ if angle($\hat{\psi}_{X}^{(d)}$, $\psi_{X}^{(d)}$) $\to 0$ as $d \to \infty$,
\item Inonsistent with $\psi_{X}^{(d)}$ if angle($\hat{\psi}_{X}^{(d)}$, $\psi_{X}^{(d)}$) $\to a$, for $0 < a < \pi/2$, as $d \to \infty$,
\item Strongly inonsistent with $\psi_{X}^{(d)}$ if angle($\hat{\psi}_{X}^{(d)}$, $\psi_{X}^{(d)}$) $\to \pi/2$ as $d \to \infty$.
\end{itemize}
Strong inconsistency implies that the estimate $\hat{\psi}_{X}^{(d)}$ and $\hat{\psi}_{Y}^{(d)}$ become completely oblivious of its population structure and reduce to arbitrary quantities, as indicated in the fact that $pi/2$ is indeed a largest angle possible between two vectors.

\section{Main Theorem and Interpretation}

\subsection{Main theorem}
Let $X^{(d)}$ and $Y^{(d)}$ be the $d$-dimensional random vectors from the multivariate Gaussian distributions with mean 0 and the simple spiked covariance matrices $\mathbf{\Sigma}_{X}^{(d)}$ and $\mathbf{\Sigma}_{Y}^{(d)}$ described in~(\ref{sec3:equ6}) and~(\ref{sec3:equ7}). With the population canonical correlation coefficient $\rho$ for $0 \le \rho \le 1$, define the population canonical weight vectors $\psi_{X}^{(d)}$ and $\psi_{Y}^{(d)}$ as,
\begin{align*}
\psi_{X}^{(d)}=\cos \theta_{X} \xi_{X1}^{(d)} + \sin \theta_{X} \xi_{X2}^{(d)}, \,\, \psi_{Y}^{(d)}=\cos \theta_{Y} \xi_{Y1}^{(d)} + \sin \theta_{Y} \xi_{Y2}^{(d)}
\end{align*}
so that the angle between $\psi_{X}^{(d)}$ and $\xi_{X1}^{(d)}$ is $\theta_{X}$, and the angle between $\psi_{Y}^{(d)}$ and $\xi_{Y1}^{(d)}$ is $\theta_{Y}$. Then, the cross-covariance matrix $\mathbf{\Sigma}_{XY}^{(d)}$ of $X^{(d)}$ and $Y^{(d)}$ is found as in~\ref{sec3:equ8}. The two random variables $X^{(d)}$ and $Y^{(d)}$ can be written in a equivalent form, 
\begin{align}
\label{main1}
\begin{bmatrix}
X^{(d)}\\
Y^{(d)}
\end{bmatrix}=
\begin{bmatrix}
\mathbf{\Sigma}_{X}^{(d)} &\mathbf{\Sigma}_{XY}^{(d)}\\
\left( \mathbf{\Sigma}_{XY}^{(d)} \right)^{T} &\mathbf{\Sigma}_{Y}^{(d)}
\end{bmatrix}Z^{(2d)},
\end{align}
where $Z^{(2d)}$ is a $2d$-dimensional standard normal random vector. The data matrix whose columns consist of $n$ i.i.d. samples from the distribution~\ref{main1} is written as,
\begin{align}
\label{main2}
\begin{bmatrix}
\mathbf{X}^{(d)}\\
\mathbf{Y}^{(d)}
\end{bmatrix}=
\begin{bmatrix}
\mathbf{\Sigma}_{X}^{(d)} &\mathbf{\Sigma}_{XY}^{(d)}\\
\left( \mathbf{\Sigma}_{XY}^{(d)} \right)^{T} &\mathbf{\Sigma}_{Y}^{(d)}
\end{bmatrix}\mathbf{Z}^{(2d)},
\end{align}
where the columns of $\mathbf{Z}^{(2d)}$ consist of $n$ i.i.d. samples from $2d$-dimensional standard normal ditribution. Denote by $z_{1}$ and $z_{2}$ the first and $(d+1)$th rows of $\mathbf{Z}^{(2d)}$ corresponding to the first rows of $\mathbf{X}^{(d)}$ and $\mathbf{Y}^{(d)}$ respectively. Then, as $d \to \infty$ with the sample size $n$ being fixed, the limiting behaviors of the sample canonical correlation coefficient $\hat{\rho}_{i}^{(d)}$ and its corresponding sample canonical weight vectors $\hat{\psi}_{Xi}^{(d)}$ and $\hat{\psi}_{Yi}^{(d)}$ obtained from the data~\ref{main2} are as follows,
\begin{thm}[Main result of the HDLSS asymptotic analysis of CCA.] 
\label{thm1}
(i) $\alpha > 1$
\begin{align*}
&\emph{\text{angle}} \left( \hat{\psi}_{X1}^{(d)}, \psi_{X}^{(d)} \right) \mathop{\longrightarrow}_{d \to \infty}^{P} \theta_{X}, \,\, \emph{\text{angle}} \left( \hat{\psi}_{Y1}^{(d_{Y})}, \psi_{Y}^{(d)} \right) \mathop{\longrightarrow}_{d \to \infty}^{P} \theta_{Y}, \,\, \hat{\rho}_{1}^{(d)} \mathop{\longrightarrow}_{d \to \infty}^{P} \frac{\langle m_{1}, m_{2} \rangle}{\| m_{1} \|_{2} \| m_{2} \|_{2}},\\
&\emph{\text{angle}} \left( \hat{\psi}_{Xi}^{(d)}, \psi_{X}^{(d)} \right) \mathop{\longrightarrow}_{d \to \infty}^{P} 0, \,\, \emph{\text{angle}} \left( \hat{\psi}_{Yi}^{(d)}, \psi_{Y}^{(d)} \right) \mathop{\longrightarrow}_{d \to \infty}^{P} 0, \,\, \hat{\rho}_{i}^{(d)} \mathop{\longrightarrow}_{d \to \infty}^{P} 0, \,\, i=2,3,\dots,n,
\end{align*}
where 
\begin{align*}
m_{1} &= (\sqrt{C_{1}}A_{1}^{2}+\sqrt{C_{2}}B_{1}^{2})z_{1}+(\sqrt{C_{1}}A_{1}A_{2}+\sqrt{C_{2}}B_{1}B_{2})z_{2},\\ 
m_{2} &= (\sqrt{C_{1}}A_{1}A_{2}+\sqrt{C_{2}}B_{1}B_{2})z_{1}+(\sqrt{C_{1}}A_{1}^{2}+\sqrt{C_{2}}B_{1}^{2})z_{2},
\end{align*}
where
\begin{align*}
&z_{1}, z_{2} \overset{i.i.d.}{\sim} N \left(\underset{n \times 1}{0}, \underset{n \times n}{\mathbf{I}} \right),\\
&C_{1}=\frac{\sigma_{X}^{2} + \sigma_{Y}^{2} + \sqrt{\left(\sigma_{X}^{2}\right)^{2} - 2\sigma_{X}^{2}\sigma_{Y}^{2} + 4\sigma_{X}^{2}\sigma_{Y}^{2}\rho^{2} + \left(\sigma_{Y}^{2}\right)^{2}}}{2},\\
&C_{2}=\frac{\sigma_{X}^{2} + \sigma_{Y}^{2} - \sqrt{\left(\sigma_{X}^{2}\right)^{2} - 2\sigma_{X}^{2}\sigma_{Y}^{2} + 4\sigma_{X}^{2}\sigma_{Y}^{2}\rho^{2} + \left(\sigma_{Y}^{2}\right)^{2}}}{2},\\
&A_{1}=\frac{C_{1}-\sigma_{Y}^{2}}{\rho\sigma_{X}\sigma_{Y}}/\sqrt{ \left(\frac{C_{1}-\sigma_{Y}^{2}}{\rho\sigma_{X}\sigma_{Y}}\right)^{2} +1 }, \,\, A_{2}=1/\sqrt{ \left(\frac{C_{1}-\sigma_{Y}^{2}}{\rho\sigma_{X}\sigma_{Y}}\right)^{2} +1},\\
&B_{1}=\frac{C_{2}-\sigma_{Y}^{2}}{\rho\sigma_{X}\sigma_{Y}}/\sqrt{ \left(\frac{C_{2}-\sigma_{Y}^{2}}{\rho\sigma_{X}\sigma_{Y}}\right)^{2} +1}, \,\, B_{2}=1/\sqrt{ \left(\frac{C_{2}-\sigma_{Y}^{2}}{\rho\sigma_{X}\sigma_{Y}}\right)^{2} +1}.
\end{align*}
(ii) $\alpha < 1$
\begin{align*}
\emph{\text{angle}} \left( \hat{\psi}_{Xi}^{(d)}, \psi_{X}^{(d)} \right) \mathop{\longrightarrow}_{d \to \infty}^{P} 0, \,\, \emph{\text{angle}} \left( \hat{\psi}_{Yi}^{(d)}, \psi_{Y}^{(d)} \right) \mathop{\longrightarrow}_{d \to \infty}^{P} 0, \,\, \hat{\rho}_{i}^{(d)} \mathop{\longrightarrow}_{d \to \infty}^{P} 1, \,\, i=1,2,\dots,n. 
\end{align*}
\end{thm}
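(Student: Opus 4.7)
The plan is to pass from the $d\times d$ ambient space to the dual $n\times n$ Gram picture, which is where the HDLSS signal survives. First, permute the rows of $\mathbf\Sigma_T^{(2d)}$ so that the coordinates $(1,d+1,2,d+2)$ sit at the top; $\mathbf\Sigma_T^{(2d)}$ then becomes block diagonal with a $4\times 4$ ``signal'' block plus a $\tau_X^2 I_{d-2}\oplus\tau_Y^2 I_{d-2}$ noise block. Using the explicit entries of $\mathbf\Sigma_{XY}^{(d)}$ from~(\ref{sec3:equ8}), the signal block to leading order in $d$ is $d^\alpha\bigl(\begin{smallmatrix}M & 0\\ 0 & 0\end{smallmatrix}\bigr)$, where $M=\bigl(\begin{smallmatrix}\sigma_X^2 & \rho\sigma_X\sigma_Y\\ \rho\sigma_X\sigma_Y & \sigma_Y^2\end{smallmatrix}\bigr)$ has eigenvalues $C_1,C_2$ and unit eigenvectors $(A_1,A_2),(B_1,B_2)$. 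Propagating this through $(\mathbf\Sigma_T^{(2d)})^{1/2}\mathbf Z^{(2d)}$, the first rows of $\mathbf X^{(d)}$ and $\mathbf Y^{(d)}$ (in the eigenbasis) are, to leading order, $d^{\alpha/2}$ times the $n$-vectors $m_1$ and $m_2$ in the theorem, while all other rows are mutually independent scaled standard Gaussians.

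For part~(i), $\alpha>1$, I would apply the Jung--Marron HDLSS PCA analysis separately to $\hat{\mathbf\Sigma}_X^{(d)}$ and $\hat{\mathbf\Sigma}_Y^{(d)}$. Letting $x_i$ denote the $i$th row of $\mathbf X^{(d)}$ in the eigenbasis, the dual Gram matrix $\tfrac{1}{n}(\mathbf X^{(d)})^T\mathbf X^{(d)}=\tfrac{1}{n}\sum_{i=1}^{d} x_i^T x_i$ behaves to leading order like $\tfrac{d^\alpha}{n}\,m_1 m_1^T+\tfrac{\tau_X^2 d}{n}I_n$, so the first sample eigenvector is consistent with $\xi_{X1}^{(d)}$ while the remaining $n-1$ form an asymptotically uniform orthonormal frame of the noise subspace (and similarly for $\mathbf Y^{(d)}$). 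I would then expand $\hat{\mathbf R}^{(d)}$ in the joint sample PC bases: its $(i,j)$ entry equals $\langle v_{X,i}^{(d)},v_{Y,j}^{(d)}\rangle$, the inner product of the $i$th and $j$th right singular vectors of $\mathbf X^{(d)}$ and $\mathbf Y^{(d)}$. The $(1,1)$ entry converges to $\langle m_1,m_2\rangle/(\|m_1\|_2\|m_2\|_2)$, while entries coupling a signal PC to a noise PC, and any two noise PCs, vanish in probability because the signal rows are independent of the noise rows. Unwinding $\hat\psi_{Xi}^{(d)}=(\hat{\mathbf\Sigma}_X^{(d)})^{-1/2}\hat\eta_{RXi}^{(d)}/\|\cdot\|_2$ sends the first left singular vector back to the direction of $\xi_{X1}^{(d)}$, whose angle with $\psi_X^{(d)}$ is exactly $\theta_X$; for $i\ge 2$, the lower sample singular vectors live in the noise subspace and, using the orthogonality constraints across successive canonical pairs, may be reassembled into a vector parallel to $\xi_{X2}^{(d)}$, so the angle with $\psi_X^{(d)}=\cos\theta_X\xi_{X1}^{(d)}+\sin\theta_X\xi_{X2}^{(d)}$ tends to $0$.

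For part~(ii), $\alpha<1$, the spike is dominated by the noise: the dual Gram matrix concentrates at $(\tau_X^2 d/n)I_n$, all sample eigenvalues become asymptotically equal, and the sample eigenvectors together span an asymptotically uniform random $n$-dimensional subspace of $\mathbb R^d$. Consequently $(\hat{\mathbf\Sigma}_X^{(d)})^{-1/2}$ acts, to leading order, as a scalar multiple of the orthogonal projection onto $\mathrm{col}(\mathbf X^{(d)})$; the dual representation of $\hat{\mathbf R}^{(d)}$ becomes a product of two random orthogonal $n\times n$ matrices, forcing $\hat\rho_i^{(d)}\to 1$. The angle conclusions follow because $\psi_X^{(d)}$ lies in a $2$-dimensional subspace that is, in the limit, captured by $\mathrm{col}(\mathbf X^{(d)})$: it can be reproduced out of the sample PCs with vanishing error, and the coupling through $\mathbf\Sigma_{XY}^{(d)}$ pins down the correct coefficient vector uniquely (up to sign), yielding consistency.

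The main technical obstacle is that $(\hat{\mathbf\Sigma}_X^{(d)})^{-1/2}$ acts simultaneously on $n$ sample eigenvectors whose eigenvalues live on different scales, and the contribution $(\hat\lambda_{Xi}^{(d)})^{-1/2}\langle\hat\xi_{Xi}^{(d)},\xi_{X1}^{(d)}\rangle$ of each noise-subspace sample PC is individually $o_p(1)$ but need not be so when summed over $i=2,\ldots,n$---exactly the pitfall flagged earlier in the discussion of~\cite{35}. I would handle it by never computing this sum entry-by-entry but instead working with the dual $n\times n$ representation of $\hat{\mathbf R}^{(d)}$, in which no explicit inverse appears, and then establishing joint (not merely marginal) weak convergence of $(v_{X,i}^{(d)},v_{Y,j}^{(d)})_{i,j=1}^n$ using the block structure of $\mathbf\Sigma_T^{(2d)}$ identified above. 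Joint convergence of the full eigenstructure, rather than entrywise convergence, is the delicate step on which the rest of the argument hinges.
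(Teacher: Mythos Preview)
Your dual/Gram representation $\hat{\mathbf R}^{(d)}=U_X\,(V_X^T V_Y)\,U_Y^T$ is correct and is a genuinely different route from the paper, which never passes to the $n\times n$ picture but instead controls $\hat{\mathbf\Sigma}_{XY}^{(d)}$, $\hat{\mathbf\Sigma}_X^{(d)}$, $\hat{\mathbf\Sigma}_Y^{(d)}$ separately through a chain of Frobenius-norm lemmas (Lemmas~\ref{lem2}--\ref{lem15},~\ref{lem7}--\ref{lem10}) and then assembles $\hat{\mathbf R}^{(d)}$ entrywise in the $(\hat\xi_X,\hat\xi_Y)$ basis, with Lemma~\ref{lem11} supplying the crucial rate $\langle\hat\eta_{X1}^{(d)},\hat\xi_{Xi}^{(d)}\rangle=o_P(d^{-(\alpha-1)/2})$ that kills the apparently diverging coefficients $\sqrt{\hat\lambda_{XY1}^{(d)}/\hat\lambda_{Xi}^{(d)}}$.

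However, your representation immediately exposes a fatal gap in your argument for part~(i). Since $V_X$ and $V_Y$ are $n\times n$ orthogonal (the data matrices have full column rank a.s.), $V_X^T V_Y$ is itself orthogonal, so \emph{every} singular value of $\hat{\mathbf R}^{(d)}$ equals $1$ identically for every finite $d$. Hence your claim that ``entries coupling a signal PC to a noise PC, and any two noise PCs, vanish in probability'' cannot hold: if only the $(1,1)$ entry survived, the matrix could not be orthogonal unless that entry were $\pm 1$. In particular, your own representation contradicts the conclusion $\hat\rho_i^{(d)}\to 0$ for $i\ge 2$ that you are trying to reach; the independence heuristic (``signal rows are independent of the noise rows'') gives the $(1,1)$ limit correctly but says nothing about the remaining block, which is constrained by orthogonality rather than by independence.

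Your handling of the angle statements for $i\ge 2$ and for part~(ii) is also off. Note first that the paper's \emph{proof} (see the subsections following~(\ref{sec3:equ28}) and the $\alpha<1$ subsection) establishes $\langle\hat\psi_{Xi}^{(d)},\psi_X^{(d)}\rangle\to 0$, i.e.\ angle $\to\pi/2$ (strong inconsistency); the ``$\to 0$'' in the theorem display is a typo. Your attempt to justify angle $\to 0$ therefore targets the wrong limit. Concretely: in~(i), there is no mechanism that ``reassembles'' the noise-subspace sample PCs into a vector parallel to $\xi_{X2}^{(d)}$ --- Lemma~\ref{lem10} shows $\langle\hat\xi_{Xi}^{(d)},\xi\rangle\to 0$ for every fixed $\xi$ --- and even if it did, a vector parallel to $\xi_{X2}^{(d)}$ makes angle $\pi/2-\theta_X$ with $\psi_X^{(d)}$, not $0$. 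In~(ii), the assertion that $\psi_X^{(d)}$ is ``captured by $\mathrm{col}(\mathbf X^{(d)})$'' is false for $\alpha<1$: by Lemma~\ref{lem14} all sample eigenvectors are strongly inconsistent with every fixed direction, so no linear combination of them can approximate $\psi_X^{(d)}$. The paper obtains the correct (strong-inconsistency) conclusion simply by reading off $\langle e_1^{(d)},\hat\xi_{Xi}^{(d)}\rangle\to 0$ and $\langle e_2^{(d)},\hat\xi_{Xi}^{(d)}\rangle\to 0$ in~(\ref{sec3:equ28}).
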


\subsection{Interpretation}

The theorem~\ref{thm1} implies that where $\hat{\psi}_{X1}^{(d)}$ and $\hat{\psi}_{Y1}^{(d)}$ converge to depend heavily on the size of the variance $d^{\alpha}$ of the population eigenvector $\xi_{X1}^{(d)}$ and $\xi_{Y1}^{(d)}$. That is, the estimates $\hat{\psi}_{X1}^{(d)}$ and $\hat{\psi}_{Y1}^{(d)}$ tend to converge to the eigenvectors $\xi_{X1}^{(d)}$ and $\xi_{Y1}^{(d)}$ when their eigenvalues $\sigma_{X}^{2} d^{\alpha}$ and $\sigma_{Y}^{2} d^{\alpha}$ become strong enough ($\alpha > 1$) as $d \to \infty$. Briefly, we summarize results. The sample canonical weight vector $\hat{\psi}_{X1}^{(d)}$ (similarly $\hat{\psi}_{Y1}^{(d)}$) is, 
\begin{itemize}
\item Consistent with $\psi_{X}^{(d)}$ if $\alpha > 1$ and angle($\psi_{X}^{(d)}$, $\xi_{X1}^{(d)}$) $= 0$ as $d \to \infty$,
\item Inonsistent with $\psi_{X}^{(d)}$ if $\alpha > 1$ and angle($\psi_{X}^{(d)}$, $\xi_{X1}^{(d)}$) $= \theta_{X}$, for $0 < \theta_{X} < \pi/2$, as $d \to \infty$,
\item Strongly inonsistent with $\psi_{X}^{(d)}$ if $\alpha < 1$ or if $\alpha > 1$ and angle($\psi_{X}^{(d)}$, $\xi_{X1}^{(d)}$) $= \pi/2$ as $d \to \infty$.
\end{itemize}

The asymptotic behavior of the sample canonical correlation coefficient $\hat{\rho}_{1}^{(d)}$ is not straightforward to imagine. Let's take a simple example where $\sigma_{X}^{2}=1, \sigma_{X}^{2}=1, \tau_{X}^{2}=1$ and $\tau_{Y}^{2}=1$ in the spiked covariance structure in~(\ref{sec3:equ6}) and~(\ref{sec3:equ7}). In this case, referring to the theorem~\ref{thm1}, the sample canonical correlation coefficient $\hat{\rho}_{1}^{(d)}$ converges in probability to the following random quantity,
\begin{align*}
\hat{\rho}_{1}^{(d)} \mathop{\longrightarrow}_{d \to \infty}^{P} \frac{\langle m_{1}, m_{2} \rangle}{\| m_{1} \|_{2} \| m_{2} \|_{2}},
\end{align*}
where
\begin{align*}
&m_{1} = \left( \frac{\sqrt{1+\rho}+\sqrt{1-\rho}}{2} \right) z_{1} + \left( \frac{\sqrt{1+\rho}-\sqrt{1-\rho}}{2} \right) z_{2},\\
&m_{2} = \left( \frac{\sqrt{1+\rho}-\sqrt{1-\rho}}{2} \right) z_{1} + \left( \frac{\sqrt{1+\rho}+\sqrt{1-\rho}}{2} \right) z_{2}.
\end{align*}
Note that $z_{1}$ and $z_{2}$ are samples from $n$-dimensional multivariate standard normal distribution. It can be easily verified that each element $m_{1i}$ of $m_{1}$ (similarly for $m_{2i}$ of $m_{2}$) follows a standard normal distribution, 
\begin{equation*}
\begin{aligned}
m_{1,i}&=\left( \frac{\sqrt{1+\rho}+\sqrt{1-\rho}}{2} \right) z_{1i} + \left( \frac{\sqrt{1+\rho}-\sqrt{1-\rho}}{2} \right) z_{2i} \sim N(0,1),\\
m_{2,i}&=\left( \frac{\sqrt{1+\rho}+\sqrt{1-\rho}}{2} \right) z_{1i} + \left( \frac{\sqrt{1+\rho}-\sqrt{1-\rho}}{2} \right) z_{2i} \sim N(0,1),
\end{aligned}
\end{equation*}
which leads to,
\begin{align*}
\| m_{1} \|_{2} \sim \sqrt{\chi^{2}_{n}}, \,\, \| m_{2} \|_{2} \sim \sqrt{\chi^{2}_{n}},
\end{align*}
where $\chi^{2}_{n}$ denotes the chi-square distribution with degree of freedom of $n$. Since the numerator part $\langle m_{1}, m_{2} \rangle$ is not a degenerate random quantity, one sees that $\hat{\rho}_{1}^{(d)}$ does not converge to a trivial random variable such as 1.

Now increase the sample size $n$ to see which value the sample canonical correlation coefficient $\hat{\rho}_{1}^{(d)}$ converges to. By the law of large numbers and noting that the elements $m_{1,i}$ and $m_{2,i}$ are from i.i.d. standard normal distribution,
\begin{align*}
\frac{\| m_{1} \|_{2}^{2}}{n}=\sum_{i=1}^{n} \frac{m_{1i}^{2}}{n} \mathop{\longrightarrow}_{n \to \infty}^{P} 1, \,\, \frac{\| m_{2} \|_{2}^{2}}{n}=\sum_{j=1}^{n} \frac{m_{2j}^{2}}{n} \mathop{\longrightarrow}_{n \to \infty}^{P} 1.
\end{align*}
Furthurmore, noting that $m_{1}$ and $m_{2}$ are i.i.d. samples,
\begin{align*}
\frac{\langle m_{1}, m_{2} \rangle}{n}=&\left(\frac{\sqrt{1+\rho}+\sqrt{1-\rho}}{2}\right) \left(\frac{\sqrt{1+\rho}-\sqrt{1-\rho}}{2}\right) \sum_{i=1}^{n} \frac{z_{1i}^{2}}{n}\\ 
&+\left(\frac{\sqrt{1+\rho}+\sqrt{1-\rho}}{2}\right) \left(\frac{\sqrt{1+\rho}-\sqrt{1-\rho}}{2}\right) \sum_{i=1}^{n} \frac{z_{2i}^{2}}{n}\\
&+\left(\frac{\sqrt{1+\rho}+\sqrt{1-\rho}}{2}\right)^{2} \sum_{i=1}^{n} \frac{z_{1i}z_{2i}}{n}\\
&+\left(\frac{\sqrt{1+\rho}-\sqrt{1-\rho}}{2}\right)^{2} \sum_{i=1}^{n} \frac{z_{1i}z_{2i}}{n}\\
&\mathop{\longrightarrow}_{n \to \infty}^{P} 2\left(\frac{\sqrt{1+\rho}+\sqrt{1-\rho}}{2}\right) \left(\frac{\sqrt{1+\rho}-\sqrt{1-\rho}}{2}\right)=\rho,
\end{align*}
which confirms the conventional large sample asymptotic property of the statistic $\hat{\rho}_{1}^{(d)}$,
\begin{align*}
\hat{\rho}_{1}^{(d)} \mathop{\longrightarrow}_{d,n \to \infty}^{P} \rho.
\end{align*}

\section{Proof}

We state here the theorem on the HDLSS asymptotic behavior of the sample eigenvalues and vectors (or sample PC variances and directions) included in~\cite{38} as frequently referred in this chapter. 
\begin{thm}[HDLSS asymptotic result of engenvalues and engenvectors.]
\label{thm2}
Under the Gaussian assumption and the one spike case of~(\ref{sec3:equ6}) and~(\ref{sec3:equ7}),\\
(i) the limit of the first sample eigenvalue $\hat{\lambda}_{Xi}^{(d)}$ (similarly for $\hat{\lambda}_{Yi}^{(d)}$) described in~(\ref{sec3:equ11}) depends on $\alpha$,
\begin{align*}
\frac{\hat{\lambda}_{X1}^{(d)}}{\emph{\text{max}}(d^{\alpha},d)} \Longrightarrow
\begin{cases}
\sigma_{X}^{2}\frac{\chi_{n}^{2}}{n}, \,\, &\alpha > 1,\\
\sigma_{X}^{2}\frac{\chi_{n}^{2}}{n}+\frac{\tau_{X}^{2}}{n}, \,\, &\alpha = 1,\\ 
\frac{\tau_{X}^{2}}{n}, \,\, &\alpha < 1,
\end{cases}
\end{align*}
as $d \to \infty$, where $\Longrightarrow$ denotes the convergence in distribution, and $\chi_{n}^{2}$ denotes the chi-square distribution with degree of freedom $n$. The rest of eigenvalues converge to the same quantities when scaled, that is, for any $\alpha \in [0,\infty), i=2,3,\dots,n$,
\begin{align*}
\frac{\hat{\lambda}_{Xi}^{(d)}}{n} \to \frac{\tau_{X}^{2}}{n}, \,\, \text{as} \,\, d \to \infty,
\end{align*}
in probability.\\
(ii) The limit of the first eigenvectors $\hat{\xi}_{Xi}^{(d)}$ (similarly for $\hat{\xi}_{Yi}^{(d)}$) described in~(\ref{sec3:equ11}) depends on $\alpha$,
\begin{align*}
\langle \hat{\xi}_{Xi}^{(d)}, \xi_{Xi}^{(d)} \rangle \Longrightarrow
\begin{cases}
1, \,\, &\alpha > 1,\\
\left( 1+\frac{\tau_{X}^{2}}{\sigma_{X}^{2}\chi_{n}^{2}} \right)^{-\frac{1}{2}}, \,\, &\alpha = 1,\\
0, \,\, &\alpha < 1,
\end{cases}
\end{align*}
as $d \to \infty$. The rest of the eigenvectors are strongly inconsistent with their population counterpart, for any $\alpha \in [0,\infty), i=2,3,\dots,n$,
\begin{align*}
\langle \hat{\xi}_{Xi}^{(d)}, \xi_{Xi}^{(d)} \rangle \to 0, \,\, \text{as} \,\, \infty,
\end{align*}
in probability.
\end{thm}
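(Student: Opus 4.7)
The plan is to reduce the $d\times d$ spectral problem to an $n\times n$ one via the dual trick. Since $\hat{\mathbf{\Sigma}}_X^{(d)}=\frac{1}{n}\mathbf{X}\mathbf{X}^T$ has rank at most $n$, its nonzero eigenvalues coincide with those of the $n\times n$ dual matrix $\hat{\mathbf{S}}_X:=\frac{1}{n}\mathbf{X}^T\mathbf{X}$; moreover, $\hat{\mathbf{S}}_X\hat v=\hat\lambda\hat v$ with $\hat\lambda>0$ implies $\hat{\mathbf{\Sigma}}_X^{(d)}(\mathbf{X}\hat v)=\hat\lambda(\mathbf{X}\hat v)$, so the sample PC direction is $\hat\xi_{Xi}^{(d)}=\mathbf{X}\hat v_i/\|\mathbf{X}\hat v_i\|_2$. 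Writing $\mathbf{X}=\bigl(\mathbf{\Sigma}_X^{(d)}\bigr)^{1/2}\mathbf{Z}$ with $\mathbf{Z}$ a $d\times n$ matrix of i.i.d.\ $N(0,1)$ entries and using the diagonal form~(\ref{sec3:equ12}), the first row of $\mathbf{X}$ equals $\sigma_X d^{\alpha/2}\mathbf{Z}_{1\cdot}$ and rows $k\ge 2$ equal $\tau_X\mathbf{Z}_{k\cdot}$, so
\begin{align*}
\hat{\mathbf{S}}_X \;=\; \frac{\sigma_X^2 d^{\alpha}}{n}\,\mathbf{Z}_{1\cdot}^T\mathbf{Z}_{1\cdot} \;+\; \frac{\tau_X^2}{n}\sum_{k=2}^{d}\mathbf{Z}_{k\cdot}^T\mathbf{Z}_{k\cdot}.
\end{align*}
Because $n$ is fixed and each $\mathbf{Z}_{k\cdot}^T\mathbf{Z}_{k\cdot}$ is a fixed-size random $n\times n$ matrix with mean $\mathbf{I}_n$, the entrywise law of large numbers gives $\frac{1}{d-1}\sum_{k=2}^{d}\mathbf{Z}_{k\cdot}^T\mathbf{Z}_{k\cdot}\to\mathbf{I}_n$ in probability.

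Dividing $\hat{\mathbf{S}}_X$ by the dominant scale $\max(d^\alpha,d)$ then produces three matrix limits. For $\alpha>1$, $\hat{\mathbf{S}}_X/d^{\alpha}\to(\sigma_X^2/n)\,\mathbf{Z}_{1\cdot}^T\mathbf{Z}_{1\cdot}$, which is rank one with nonzero eigenvalue $\sigma_X^2\|\mathbf{Z}_{1\cdot}\|_2^2/n\sim\sigma_X^2\chi_n^2/n$; for $\alpha=1$, $\hat{\mathbf{S}}_X/d\to(\sigma_X^2/n)\,\mathbf{Z}_{1\cdot}^T\mathbf{Z}_{1\cdot}+(\tau_X^2/n)\,\mathbf{I}_n$, whose top eigenvalue is $\sigma_X^2\chi_n^2/n+\tau_X^2/n$ and whose remaining $n-1$ eigenvalues equal $\tau_X^2/n$; for $\alpha<1$, $\hat{\mathbf{S}}_X/d\to(\tau_X^2/n)\,\mathbf{I}_n$, so every scaled sample eigenvalue converges to $\tau_X^2/n$. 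Weyl's inequality converts each matrix limit into the corresponding eigenvalue limit, yielding the three cases of part~(i) for $\hat\lambda_{X1}^{(d)}$; the subordinate eigenvalues $\hat\lambda_{Xi}^{(d)}$, $i\ge 2$, land on $\tau_X^2/n$ in every regime since they are always controlled by the noise bulk.

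For part~(ii), I pull back to $\mathbb{R}^d$. When $\alpha\ge 1$ the limiting dual matrix has a strict spectral gap (a.s., since $\chi_n^2>0$), so Davis--Kahan yields $\hat v_1\to\mathbf{Z}_{1\cdot}^T/\|\mathbf{Z}_{1\cdot}\|_2$. The first coordinate of $\mathbf{X}\hat v_1$ is then $\sigma_X d^{\alpha/2}\langle\mathbf{Z}_{1\cdot},\hat v_1\rangle\approx \sigma_X d^{\alpha/2}\|\mathbf{Z}_{1\cdot}\|_2$, while the squared norm of the remaining $d-1$ coordinates is $\tau_X^2\sum_{k\ge 2}\langle\mathbf{Z}_{k\cdot},\hat v_1\rangle^2=\tau_X^2(d-1)(1+o_P(1))$ by LLN conditional on $\hat v_1$, so
\begin{align*}
\bigl\langle\hat\xi_{X1}^{(d)},\,e_1\bigr\rangle^2 \;=\; \frac{\sigma_X^2\,d^{\alpha}\,\chi_n^2}{\sigma_X^2\,d^{\alpha}\,\chi_n^2+\tau_X^2(d-1)}\,(1+o_P(1)),
\end{align*}
which converges to $1$ for $\alpha>1$ and to $(1+\tau_X^2/(\sigma_X^2\chi_n^2))^{-1}$ for $\alpha=1$. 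When $\alpha<1$ the limit of $\hat{\mathbf{S}}_X/d$ is a scalar multiple of $\mathbf{I}_n$, so the limiting $\hat v_1$ is uniformly distributed on the unit sphere and the same coordinate computation gives an inner product of order $d^{(\alpha-1)/2}\to 0$. For the subordinate eigenvectors $\hat\xi_{Xi}^{(d)}$, $i\ge 2$, the orthogonality $\hat v_i\perp\hat v_1$ forces the first coordinate of $\mathbf{X}\hat v_i$ to vanish to leading order, while the $i$-th coordinate $\tau_X\langle\mathbf{Z}_{i\cdot},\hat v_i\rangle$ is $O_P(1)$ against a norm of order $\sqrt{d}$, giving the claimed strong inconsistency for every $\alpha$.

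The main obstacle is the borderline regime $\alpha=1$, in which the spike and the noise coexist at the same scale and the spectral gap of the limiting dual matrix is the random quantity $\sigma_X^2\chi_n^2/n$; here the eigenvalue and eigenvector limits are coupled, and one must justify Davis--Kahan along a joint convergence of $\hat{\mathbf{S}}_X/d$ in operator norm rather than by decoupling the two limits. A related subtlety is that the subordinate eigenvectors $\hat v_i$, $i\ge 2$, are not uniquely defined in the limit because the limiting dual matrix has the repeated eigenvalue $\tau_X^2/n$; this is handled by working with the spectral projector onto the orthogonal complement of $\hat v_1$ and exploiting that the statement of part~(ii) only records the inner products $\langle\hat\xi_{Xi}^{(d)},\xi_{Xi}^{(d)}\rangle$, which are insensitive to the choice of basis inside the degenerate eigenspace.
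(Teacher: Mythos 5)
First, note that the paper does not actually prove this theorem: it is imported verbatim from~\cite{38} as background, so there is no in-paper proof to compare against. Your argument is essentially correct and is, in fact, the standard route to this result (and the one used in the cited literature): pass to the $n\times n$ dual Gram matrix $\hat{\mathbf{S}}_X=\frac{1}{n}\mathbf{X}^T\mathbf{X}$, split it into the rank-one spike $\frac{\sigma_X^2 d^\alpha}{n}\mathbf{Z}_{1\cdot}^T\mathbf{Z}_{1\cdot}$ plus a noise Gram matrix that concentrates at $\frac{\tau_X^2}{n}(d-1)\mathbf{I}_n$, read off the eigenvalues by Weyl/interlacing, and pull eigenvectors back to $\mathbb{R}^d$ via $\hat\xi_{Xi}^{(d)}=\mathbf{X}\hat v_i/\|\mathbf{X}\hat v_i\|_2$. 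This is a genuinely different style from what the paper does for its own analogous CCA lemmas (Lemmas~\ref{lem7}--\ref{lem10}), which stay in the primal $d\times d$ space and argue via Frobenius-norm convergence to a rank-one limit plus best-rank-one-approximation and contradiction arguments. The dual route is shorter and makes the $\alpha=1$ boundary case transparent (the spike and the identity coexist in the same $n\times n$ limit), which the primal arguments in this paper never attempt.

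Two places need tightening. First, the step ``$\tau_X^2\sum_{k\ge 2}\langle\mathbf{Z}_{k\cdot},\hat v_1\rangle^2=\tau_X^2(d-1)(1+o_P(1))$ by LLN conditional on $\hat v_1$'' is not literally a conditional LLN, because $\hat v_1$ is built from the very rows $\mathbf{Z}_{k\cdot}$ being summed; the correct justification is that $\frac{1}{d-1}\sum_{k\ge2}\mathbf{Z}_{k\cdot}^T\mathbf{Z}_{k\cdot}\to\mathbf{I}_n$ in operator norm (entrywise LLN suffices since $n$ is fixed), so the quadratic form converges uniformly over unit vectors and the dependence of $\hat v_1$ on the noise is irrelevant. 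Second, for $\alpha<1$ the claim that the limiting $\hat v_1$ is uniform on the sphere is neither justified nor needed; the bound $|\langle\mathbf{Z}_{1\cdot},\hat v_1\rangle|\le\|\mathbf{Z}_{1\cdot}\|_2=O_P(1)$ together with $\|\mathbf{X}\hat v_1\|_2^2=n\hat\lambda_{X1}^{(d)}\asymp d$ already gives the $O_P(d^{(\alpha-1)/2})$ rate, so you should drop the distributional claim. Finally, your argument delivers $\hat\lambda_{Xi}^{(d)}/d\to\tau_X^2/n$ for $i\ge2$, which is the correct statement; the ``$\hat\lambda_{Xi}^{(d)}/n$'' appearing in the theorem as printed is a typo (compare Lemma~\ref{lem10}).
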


\subsection{Settings}

Let $\mathbf{X}^{(d)}$ and $\mathbf{Y}^{(d)}$ be $d \times n$ matrices that collect $d$-dimensional vector observations measured on the common set of $n$ samples. Following the represectation~(\ref{sec3:equ10}) of the two random vectors $X^{(d)}$ and $Y^{(d)}$, we, using cavariance and cross-covariance structures given in~(\ref{sec3:equ12}),~(\ref{sec3:equ8}) and~(\ref{sec3:equ9}), write $\mathbf{X}^{(d)}$ and $\mathbf{Y}^{(d)}$ as,
\begin{align}
\label{sec3:equ14}
\begin{bmatrix}
\mathbf{X}^{(d)}\\
\mathbf{Y}^{(d)}
\end{bmatrix}=
\left(\mathbf{\Sigma}_{T}^{(2d)}\right)^{\frac{1}{2}}\mathbf{Z}^{(2d)}=
\begin{bmatrix}
\mathbf{\Sigma}_{X}^{(d)} &\mathbf{\Sigma}_{XY}^{(d)}\\
\left( \mathbf{\Sigma}_{XY}^{(d)} \right)^{T} &\mathbf{\Sigma}_{Y}^{(d)}
\end{bmatrix}^{\frac{1}{2}}\mathbf{Z}^{(2d)},
\end{align}
where $\mathbf{Z}^{(2d)}$ is a $2d \times n$ matrix with columns of $\mathbf{Z}^{(2d)}$ being i.i.d. observations from an $2d$-dimensional standard normal distribution. We introduce notations for elements of the matrix $\mathbf{Z}^{(2d)}$ for a later use in the proof,
 \begin{align}
\label{sec3:equ15}
\mathbf{Z}^{(2d)}=
\begin{bmatrix}
\mathbf{Z}^{(d)}_{X}\\
\mathbf{Z}^{(d)}_{Y}
\end{bmatrix}=
\begin{bmatrix}
z_{X11} &z_{X12} &\dots &z_{X1n}\\
\vdots &\vdots &\ddots &\vdots\\
z_{Xd1} &z_{Xd2} &\dots &z_{Xdn}\\
z_{Y11} &z_{Y12} &\dots &z_{Y1n}\\
\vdots &\vdots &\ddots &\vdots\\
z_{Yd1} &z_{Yd2} &\dots &z_{Ydn}
\end{bmatrix}=
\begin{bmatrix}
z_{X1\bullet}\\
\vdots\\
z_{Xd\bullet}\\
z_{Y1\bullet}\\
\vdots\\
z_{Yd\bullet}
\end{bmatrix},
\end{align}
where $\mathbf{Z}^{(d)}_{X}$ denotes the upper half of $\mathbf{Z}^{(2d)}$ correspondint to $\mathbf{X}^{(d)}$ (similarly for $\mathbf{Z}^{(d)}_{Y}$), $z_{Xij}$, for $i=1,2,\dots,d$ and $j=1,2,\dots,n$, represents the $i$th element of the $j$ observation in $\mathbf{Z}^{(d)}_{X}$ (similarly for $z_{Yij}$), and $z_{Xk\bullet}$ denotes the $k$th row of $\mathbf{Z}^{(2d)}$ (simiarly for $z_{Yk\bullet}$. To investigate the asymptotic behavior of the sample covariance and cross-coraviance of $\mathbf{X}^{(d)}$ and $\mathbf{Y}^{(d)}$, we want to expand the matrix in~(\ref{sec3:equ14}) to get an explicit expression elementwise. This can be down either by manual or using symbolic operations in Matlab. The results, however, is too long to be included in this page, so we are going to work with a big O, small o representation for the elements that have lengthy expressions. The covariance matrix $\mathbf{\Sigma}_{T}^{(2d)}$ takes the following eigendecompsition,
\begin{align}
\label{sec3:equ16}
\mathbf{\Sigma}^{(2d)}_{T}=\sum_{i=1}^{2d} \lambda_{Ti}^{(2d)} \xi_{Ti}^{(2d)} \left ( \xi_{Ti}^{(2d)} \right )^{T}, 
\end{align}
where
\begin{align*}
&\lambda_{T1}^{(2d)}=O_{P}(d^{\alpha}), \,\, \lambda_{T2}^{(2d)}=O_{P}(d^{\alpha}), \,\, \lambda_{T4}^{(2d)}=O_{P}(1), \,\, \lambda_{T4}^{(2d)}=O_{P}(1),\\
&\lambda_{Ti}^{(2d)}=\tau_{X}^{2}, \,\, i=5,6,\dots,d+2\\
&\lambda_{Tj}^{(2d)}=\tau_{Y}^{2}, \,\, j=d+3,d+4,\dots,2d,\\
&\xi_{T1}^{(2d)}=
\begin{bmatrix}
O_{P}(1) &O_{P}(\frac{1}{\sqrt{d^{\alpha}}}) &O_{P}(1) &\dots &O_{P}(1) &O_{P}(1) &O_{P}(\frac{1}{\sqrt{d^{\alpha}}}) &O_{P}(1) &\dots &O_{P}(1)
\end{bmatrix}^{T},\\
&\xi_{T2}^{(2d)}=
\begin{bmatrix}
O_{P}(1) &O_{P}(\frac{1}{\sqrt{d^{\alpha}}}) &O_{P}(1) &\dots &O_{P}(1) &O_{P}(1) &O_{P}(\frac{1}{\sqrt{d^{\alpha}}}) &O_{P}(1) &\dots &O_{P}(1)
\end{bmatrix}^{T},\\
&\xi_{T3}^{(2d)}=
\begin{bmatrix}
O_{P}(1) &O_{P}(1) &\dots &O_{P}(1) &O_{P}(1)
\end{bmatrix}^{T},\\
&\xi_{T4}^{(2d)}=
\begin{bmatrix}
O_{P}(1) &O_{P}(1) &\dots &O_{P}(1) &O_{P}(1)
\end{bmatrix}^{T},\\
&\xi_{Ti}^{(2d)}=e_{i}^{(2d)}, \,\, i=5,6,\dots,d+2,\\
&\xi_{Tj}^{(2d)}=e_{j}^{(2d)},  \,\, j=d+3,d+4,\dots,2d.
\end{align*}
Then, using~(\ref{sec3:equ14}),~(\ref{sec3:equ15}) and~(\ref{sec3:equ16}), the data matrix $\mathbf{X}^{(d)}$ and $\mathbf{Y}^{(d)}$ can be expressed as, 
\begin{align}
\label{sec3:equ17}
\begin{bmatrix}
\mathbf{X}^{(d)}\\
\mathbf{Y}^{(d)}
\end{bmatrix}=
\begin{bmatrix}
a_{1}z_{X1\bullet}+a_{2}z_{X2\bullet}+a_{3}z_{Y1\bullet}+a_{4}z_{Y2\bullet}\\
a_{5}z_{X1\bullet}+a_{6}z_{X2\bullet}+a_{7}z_{Y1\bullet}+a_{8}(1)z_{Y2\bullet}\\
\tau_{X}z_{X3\bullet}\\
\vdots\\
\tau_{X}z_{Xd\bullet}\\
b_{1}z_{X1\bullet}+b_{2}z_{X2\bullet}+b_{3}z_{Y1\bullet}+b_{4}z_{Y2\bullet}\\
b_{5}z_{X1\bullet}+b_{6}z_{X2\bullet}+b_{7}z_{Y1\bullet}+b_{8}z_{Y2\bullet}\\
\tau_{Y}z_{Y3\bullet}\\
\vdots\\
\tau_{Y}z_{Yd\bullet}
\end{bmatrix},
\end{align}
where $a_{1}, a_{3}, b_{1}$ and $b_{3}$ are random variable of magnitude of $O_{P}(\sqrt{d^{\alpha}})$, and the rest of $a_{i}$ and $b_{j}$ are of magnitude of $O_{P}(1)$. Let $c_{1}, c_{2}, d{1}$ and $d{2}$ denote the first, second, $(d+1)$th and $(d+2)$th row of the matrix~(\ref{sec3:equ17}), respectively. 
\begin{align}
\label{sec3:equ18}
\begin{aligned}
&c_{1}=a_{1}z_{X1\bullet}+a_{2}z_{X2\bullet}+a_{3}z_{Y1\bullet}+a_{4}z_{Y2\bullet},
\quad c_{2}=a_{5}z_{X1\bullet}+a_{6}z_{X2\bullet}+a_{7}z_{Y1\bullet}+a_{8}z_{Y2\bullet},\\
&d_{1}=b_{1}z_{X1\bullet}+b_{2}z_{X2\bullet}+b_{3}z_{Y1\bullet}+b_{4}z_{Y2\bullet},
\quad d_{2}=b_{5}z_{X1\bullet}+b_{6}z_{X2\bullet}+b_{7}z_{Y1\bullet}+b_{8}z_{Y2\bullet}.
\end{aligned}
\end{align}
The sample covariance and cross-covariance matrices $\hat{\mathbf{\Sigma}}^{(d)}_{X}$, $\hat{\mathbf{\Sigma}}^{(d)}_{Y}$ and $\hat{\mathbf{\Sigma}}^{(d)}_{XY}$ are found as blocks of the following matrix,
\begin{align}
\label{sec3:equ34}
\begin{bmatrix}
\hat{n\mathbf{\Sigma}}_{X}^{(d)} &\hat{n\mathbf{\Sigma}}_{XY}^{(d)}\\
\left(n\hat{\mathbf{\Sigma}}_{XY}^{(d)} \right)^{T} &n\hat{\mathbf{\Sigma}}_{Y}^{(d)}
\end{bmatrix}=
\begin{bmatrix}
\mathbf{X}^{(d)}\\
\mathbf{Y}^{(d)}
\end{bmatrix}
\begin{bmatrix}
\mathbf{X}^{(d)}\\
\mathbf{Y}^{(d)}
\end{bmatrix}^{T},
\end{align}
where elementwise-explicit matrices are given as,
\begin{align*}
&\hat{\mathbf{\Sigma}}^{(d)}_{X} =\\
&\frac{1}{n}\begin{bmatrix}
\langle c_{1}, c_{1} \rangle &\langle c_{1}, c_{2} \rangle &\tau_{X}\langle c_{1}, z_{X3\bullet} \rangle &\tau_{X}\langle c_{1}, z_{X4\bullet} \rangle &\ldots &\tau_{X}\langle c_{1}, z_{Xd\bullet} \rangle\\
\langle c_{2}, c_{1} \rangle &\langle c_{2}, c_{2} \rangle &\tau_{X}\langle c_{2}, z_{X3\bullet} \rangle &\tau_{X}\langle c_{2}, z_{X4\bullet} \rangle &\ldots &\tau_{X}\langle c_{2}, z_{Xd\bullet} \rangle\\
\tau_{X}\langle z_{X3\bullet}, c_{1} \rangle &\tau_{X}\langle z_{X3\bullet}, c_{2} \rangle &\tau_{X}^{2}\langle z_{X3\bullet}, z_{X3\bullet} \rangle &\tau_{X}^{2}\langle z_{X3\bullet}, z_{4,\bullet} \rangle &\ldots &\tau_{X}^{2}\langle z_{X3\bullet}, z_{Xd\bullet} \rangle\\
\tau_{X}\langle z_{X4\bullet}, c_{1} \rangle &\tau_{X}\langle z_{X4\bullet}, c_{2} \rangle &\tau_{X}^{2}\langle z_{X4\bullet}, z_{X3\bullet} \rangle &\tau_{X}^{2}\langle z_{X4\bullet}, z_{X4\bullet} \rangle &\ldots &\tau_{X}^{2}\langle z_{X4\bullet}, z_{Xd\bullet} \rangle\\
\vdots &\vdots &\vdots &\vdots &\ddots &\vdots\\
\tau_{X}\langle z_{Xd\bullet}, c_{1} \rangle &\tau_{X}\langle z_{Xd\bullet}, c_{2} \rangle &\tau_{X}^{2}\langle z_{Xd\bullet}, z_{3,\bullet} \rangle &\tau_{X}^{2}\langle z_{Xd\bullet}, z_{4,\bullet} \rangle &\ldots &\tau_{X}^{2}\langle z_{Xd\bullet}, z_{Xd\bullet} \rangle
\end{bmatrix},
\end{align*}
\begin{align}
\label{sec3:equ19}
\begin{aligned}
&\hat{\mathbf{\Sigma}}^{(d)}_{Y} =\\
&\frac{1}{n}\begin{bmatrix}
\langle d_{1}, d_{1} \rangle &\langle d_{1}, d_{2} \rangle &\tau_{Y}\langle d_{1}, z_{Y3\bullet} \rangle &\tau_{Y}\langle d_{1}, z_{Y4\bullet} \rangle &\ldots &\tau_{Y}\langle d_{1}, z_{Yd\bullet} \rangle\\
\langle d_{2}, d_{1} \rangle &\langle d_{2}, d_{2} \rangle &\tau_{Y}\langle d_{2}, z_{Y3\bullet} \rangle &\tau_{Y}\langle d_{2}, z_{Y4\bullet} \rangle &\ldots &\tau_{Y}\langle d_{2}, z_{Yd\bullet} \rangle\\
\tau_{Y}\langle z_{Y3\bullet}, d_{1} \rangle &\tau_{Y}\langle z_{Y3\bullet}, d_{2} \rangle &\tau_{Y}^{2}\langle z_{Y3\bullet}, z_{Y3\bullet} \rangle &\tau_{Y}^{2}\langle z_{Y3\bullet}, z_{Y4\bullet} \rangle &\ldots &\tau_{Y}^{2}\langle z_{Y3\bullet}, z_{Yd\bullet} \rangle\\
\tau_{Y}\langle z_{Y4\bullet}, d_{1} \rangle &\tau_{Y}\langle z_{Y4\bullet}, d_{2} \rangle &\tau_{Y}^{2}\langle z_{Y4\bullet}, z_{Y3\bullet} \rangle &\tau_{Y}^{2}\langle z_{Y4\bullet}, z_{Y4\bullet} \rangle &\ldots &\tau_{Y}^{2}\langle z_{Y4\bullet}, z_{Yd\bullet} \rangle\\
\vdots &\vdots &\vdots &\vdots &\ddots &\vdots\\
\tau_{Y}\langle z_{Yd\bullet}, d_{1} \rangle &\tau_{Y}\langle z_{Yd\bullet}, d_{2} \rangle &\tau_{Y}^{2}\langle z_{Yd\bullet}, z_{Y3\bullet} \rangle &\tau_{Y}^{2}\langle z_{Yd\bullet}, z_{Y4\bullet} \rangle &\ldots &\tau_{Y}^{2}\langle z_{Yd\bullet}, z_{Yd\bullet} \rangle
\end{bmatrix},
\end{aligned}
\end{align}
\begin{align*}
&\hat{\mathbf{\Sigma}}^{(d)}_{XY} =\\
&\frac{1}{n}\begin{bmatrix}
\langle c_{1}, d_{1} \rangle &\langle c_{1}, d_{2} \rangle &\tau_{Y}\langle c_{1}, z_{Y3\bullet} \rangle &\tau_{Y}\langle c_{1}, z_{Y4\bullet} \rangle &\ldots &\tau_{Y}\langle c_{1}, z_{Yd\bullet} \rangle\\
\langle c_{2}, d_{1} \rangle &\langle c_{2}, d_{2} \rangle &\tau_{Y}\langle c_{2}, z_{Y3\bullet} \rangle &\tau_{Y}\langle c_{2}, z_{Y4\bullet} \rangle &\ldots &\tau_{Y}\langle c_{2}, z_{Yd\bullet} \rangle\\
\tau_{X}\langle z_{X3\bullet}, d_{1} \rangle &\tau_{X}\langle z_{X3\bullet}, d_{2} \rangle &\tau_{X}\tau_{Y}\langle z_{X3\bullet}, z_{Y3\bullet} \rangle &\tau_{X}\tau_{Y}\langle z_{X3\bullet}, z_{Y4\bullet} \rangle &\ldots &\tau_{X}\tau_{Y}\langle z_{X3\bullet}, z_{Yd\bullet} \rangle\\
\tau_{X}\langle z_{X4\bullet}, d_{1} \rangle &\tau_{X}\langle z_{X4\bullet}, d_{2} \rangle &\tau_{X}\tau_{Y}\langle z_{X4\bullet}, z_{Y3\bullet} \rangle &\tau_{X}\tau_{Y}\langle z_{X4\bullet}, z_{Y4\bullet} \rangle &\ldots &\tau_{X}\tau_{Y}\langle z_{X4\bullet}, z_{Yd\bullet} \rangle\\
\vdots &\vdots &\vdots &\vdots &\ddots &\vdots\\
\tau_{X}\langle z_{Xd\bullet}, d_{1} \rangle &\tau_{X}\langle z_{Xd\bullet}, d_{2} \rangle &\tau_{X}\tau_{Y}\langle z_{Xd\bullet}, z_{Y3\bullet} \rangle &\tau_{X}\tau_{Y}\langle z_{Xd\bullet}, z_{Y4\bullet} \rangle &\ldots &\tau_{X}\tau_{Y}\langle z_{Xd\bullet}, z_{Yd\bullet} \rangle
\end{bmatrix}.
\end{align*}

\subsection{Notation}

Here, we introduce notations to be used in the proof for HDLSS asymptotic behaviors of a random variable. 

\begin{itemize}
\item For a $d$-dimensional random variable $X^{(d)}$, we say that $X^{(d)}=o_{P}(d^{\alpha})$, for $\alpha \in R$, if, $\forall \epsilon > 0$,
\begin{align*}
\lim_{d\to\infty} P\left( \left| \frac{X^{(d)}}{d^{\alpha}} > \epsilon \right| \right) = 0.
\end{align*}
\item For a $d$-dimensional random variable $X^{(d)}$, we say that $X^{(d)}=O_{P}(d^{\alpha})$, for $\alpha \in R$, if, $\forall \epsilon > 0$, there exists a finite $C > 0$ such that,
\begin{align*}
P\left( \left| \frac{X^{(d)}}{d^{\alpha}} > M \right| \right) < \epsilon, \,\, \forall d.
\end{align*}
\item For a $d$-dimensional random variable $X^{(d)}$, we say that $X^{(d)} \asymp d^{\alpha}$, for $\alpha \in R$, if,
\begin{align*}
X^{(d)}=O_{P}(d^{\alpha}) \,\, \text{and} \,\, X^{(d)} \not= o_{P}(d^{\alpha}).
\end{align*}
\end{itemize}

\subsection{Case of \texorpdfstring{$\alpha > 1$}{}}

First, we prove Theorem~\ref{thm1} under the condition of $\alpha > 1$ with the spiked model of the population covariance structure of $X^{(d)}$ and $Y^{(d)}$ decribed in~(\ref{sec3:equ8}).

\subsubsection{Behavior of the sample cross-covariance matrix}

We now investigate the HDLSS asymptotic behavior of the sample cross-covariance matrix $\hat{\mathbf{\Sigma}}^{(d)}_{XY}$ given in~(\ref{sec3:equ19}), in specific, its sample singular values and singular vectors. The singular value decomposition (SVD) of $\hat{\mathbf{\Sigma}}^{(d)}_{XY}$ gives,
\begin{align}
\label{sec3:equ20}
\hat{\mathbf{\Sigma}}^{(d)}_{XY} =  \sum_{i=1}^{n} \hat{\lambda}_{XYi}^{(d)} \hat{\eta}_{Xi}^{(d)} \left( \hat{\eta}_{Yi}^{(d)} \right)^{T},
\end{align}
where $\hat{\lambda}_{XY1}^{(d)} \ge \hat{\lambda}_{XY2}^{(d)} \ge \dots \ge \hat{\lambda}_{XYn}^{(d)} \ge 0$, $\| \hat{\eta}_{Xi}^{(d)} \|_{2} = \| \hat{\eta}_{Yi}^{(d)} \|_{2} = 1$ for $i=1,2,\dots,n$, and $\langle \hat{\eta}_{Xi}^{(d)}, \hat{\eta}_{Xj}^{(d)} \rangle = \langle \hat{\eta}_{Yi}^{(d)}, \hat{\eta}_{Yj}^{(d)} \rangle = 0$, for $i \not= j$.
\begin{lem}[CCA HDLSS Asymptotic lemma 1.]
\label{lem2}
Let $C_{XY}$ and $\mathbf{M}_{XY}$ be,
\begin{align*}
C_{XY}=\lim_{d\to\infty}\frac{\langle c_{1}, d_{1} \rangle}{d^{\alpha}}, \,\, \mathbf{M}_{XY}=
\begin{bmatrix}
C_{XY} &\underset{1 \times (d-1)}{\mathbf{0}}\\
\underset{(d-1) \times 1}{\mathbf{0}} &\underset{(d-1) \times (d-1)}{\mathbf{0}}\\
\end{bmatrix},
\end{align*}
where $c_{1}$ and $d_{1}$ are defined in~(\ref{sec3:equ18}) and so $C_{XY}$ is a non-degenerate random variable. Then, for $\alpha > 1$,
\begin{align*}
\left\| \frac{n\hat{\mathbf{\Sigma}}^{(d)}_{XY}}{d^{\alpha}}-\mathbf{M}_{XY} \right\|_{F}^{2} \mathop{\longrightarrow}_{d \to \infty}^{p} 0,
\end{align*}
where $\| \bullet \|_{F}$ is the Frobenious norm of a matrix.
\end{lem}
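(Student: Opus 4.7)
The plan is to expand the squared Frobenius norm as a sum over all entries and partition it into four blocks corresponding to the $2\times 2$ ``spike'' block (indices in $\{1,2\}$), the two ``cross'' blocks (one index in $\{1,2\}$ and the other in $\{3,\dots,d\}$), and the ``bulk'' block (both indices in $\{3,\dots,d\}$). I will then show that each block's contribution to $\|n\hat{\mathbf{\Sigma}}_{XY}^{(d)}/d^\alpha - \mathbf{M}_{XY}\|_F^2$ tends to $0$ in probability; since there are only four blocks, their sum is $o_P(1)$.

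For the spike block, the $(1,1)$-entry of $n\hat{\mathbf{\Sigma}}_{XY}^{(d)}$ is $\langle c_1, d_1\rangle$, and by the very definition of $C_{XY}$ we have $\langle c_1, d_1\rangle / d^\alpha \to C_{XY}$ in probability; here the dominant terms arise from products $a_ib_j\|z_{Xk\bullet}\|^2$ with $a_i,b_j$ of order $\sqrt{d^\alpha}$, and the $\chi^2_n$ norms $\|z_{X1\bullet}\|^2,\|z_{Y1\bullet}\|^2$ are $O_P(1)$, while all subleading terms divided by $d^\alpha$ are $o_P(1)$. Since $c_2, d_2$ have only $O_P(1)$ coefficients, the inner products $\langle c_1, d_2\rangle$ and $\langle c_2, d_1\rangle$ are $O_P(\sqrt{d^\alpha})$ and $\langle c_2, d_2\rangle = O_P(1)$, so the entries $(1,2), (2,1), (2,2)$ divided by $d^\alpha$ are each $o_P(1)$ and contribute $o_P(1)$ to the Frobenius norm squared.

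For a cross block such as $\{(1,j): j \geq 3\}$, the entries are $\tfrac{\tau_Y}{n d^\alpha}\langle c_1, z_{Yj\bullet}\rangle$, and I would condition on $c_1$. Since $c_1$ depends only on $z_{X1\bullet}, z_{X2\bullet}, z_{Y1\bullet}, z_{Y2\bullet}$, it is independent of $\{z_{Yj\bullet}\}_{j \geq 3}$, so conditionally on $c_1$ the quantities $\langle c_1, z_{Yj\bullet}\rangle$ are i.i.d.\ $N(0, \|c_1\|^2)$. Hence $\sum_{j=3}^{d}\langle c_1, z_{Yj\bullet}\rangle^2$ has distribution $\|c_1\|^2 \chi^2_{d-2}$; using $\|c_1\|^2 = O_P(d^\alpha)$ and $\chi^2_{d-2}/d \to 1$ in probability, this block contributes $O_P(d^{1-\alpha}) \to 0$ since $\alpha > 1$. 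The cross block $\{(2,j): j \geq 3\}$ is smaller ($\|c_2\|^2 = O_P(1)$), contributing $O_P(d^{1-2\alpha})$, and the symmetric blocks with $d_1, d_2$ in place of $c_1, c_2$ are handled identically. For the bulk block $\{(i,j): i,j \geq 3\}$, the $\langle z_{Xi\bullet}, z_{Yj\bullet}\rangle$ are uncorrelated across $(i,j)$ pairs with second moment $n$, so $\sum_{i,j \geq 3}\langle z_{Xi\bullet}, z_{Yj\bullet}\rangle^2 = O_P(d^2)$ by Markov, contributing $O_P(d^{2-2\alpha}) \to 0$.

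The main obstacle is bookkeeping the large number of summed terms: the two cross blocks each contain $d-2$ entries and the bulk block $(d-2)^2$ entries, so per-entry decay must beat the number of entries. The key device that makes this tractable is conditioning on the ``spike rows'' $z_{X1\bullet}, z_{X2\bullet}, z_{Y1\bullet}, z_{Y2\bullet}$, which makes the remaining inner products sums of independent Gaussians amenable to $\chi^2$ concentration (or Markov with the second moments just computed). The condition $\alpha > 1$ is then precisely what forces the polynomial rates $d^{1-\alpha}$ and $d^{2-2\alpha}$ to vanish; for $\alpha \le 1$ the argument breaks down, which is consistent with the qualitatively different conclusion in case (ii) of Theorem~\ref{thm1}.
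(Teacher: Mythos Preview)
Your proposal is correct and follows essentially the same approach as the paper: expand the squared Frobenius norm entrywise, partition into the $(1,1)$ entry, the remaining three spike entries, the cross blocks, and the bulk block, and show each contribution is $o_P(1)$ using $\alpha>1$. The only cosmetic difference is that the paper bounds the cross and bulk sums via Cauchy--Schwarz ($\langle u,v\rangle^2\le\|u\|^2\|v\|^2$) together with the law of large numbers on $\sum_i\|z_{Xi\bullet}\|^2/d$, whereas you use conditioning to get an exact $\chi^2$ representation for the cross blocks and a second-moment/Markov bound for the bulk; both routes yield the same rates $O_P(d^{1-\alpha})$ and $O_P(d^{2-2\alpha})$.
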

\begin{proof}
Let $\mathbf{M}_{XY}^{(d)}$ be,
\begin{align*} 
\mathbf{M}_{XY}^{(d)}=
\begin{bmatrix}
\frac{\langle c_{1}, d_{1} \rangle}{d^{\alpha}} &\underset{1 \times (d-1)}{\mathbf{0}}\\
\underset{(d-1) \times 1}{\mathbf{0}} &\underset{(d-1) \times (d-1)}{\mathbf{0}}\\
\end{bmatrix}.
\end{align*}
It is obvious to see that,
\begin{align*}
\left\| \mathbf{M}_{XY}^{(d)}-\mathbf{M}_{XY} \right\|_{F}^{2} \mathop{\longrightarrow}_{d \to \infty}^{p} 0.
\end{align*}
Using the Cauchy-Schwarz inequality,
\begin{align*}
\left\| \frac{n\hat{\mathbf{\Sigma}}^{(d)}_{XY}}{d^{\alpha}}-\mathbf{M}_{XY}^{(d)} \right\|_{F}^{2} 
=&\frac{\langle c_{1}, d_{2} \rangle^{2}}{d^{2\alpha}}
+\frac{\langle c_{2}, d_{1} \rangle^{2}}{d^{2\alpha}}
+\frac{\langle c_{2}, d_{2} \rangle^{2}}{d^{2\alpha}}
+\tau_{Y}^{2}\sum_{i=3}^{d} \frac{\langle c_{1}, z_{Yi\bullet} \rangle^{2}}{d^{2\alpha}}\\
&+\tau_{Y}^{2}\sum_{i=3}^{d} \frac{\langle c_{2}, z_{Yi\bullet} \rangle^{2}}{d^{2\alpha}}
+\tau_{X}^{2}\sum_{i=3}^{d} \frac{\langle z_{Xi\bullet}, d_{1} \rangle^{2}}{d^{2\alpha}}\\
&+\tau_{X}^{2}\sum_{i=3}^{d} \frac{\langle z_{Xi\bullet}, d_{2} \rangle^{2}}{d^{2\alpha}}
+\tau_{X}^{2}\tau_{Y}^{2}\sum_{i=3}^{d}\sum_{j=3}^{d} \frac{\langle z_{Xi\bullet}, z_{Yj\bullet} \rangle^{2}}{d^{2\alpha}}\\
\le &\frac{\langle c_{1}, d_{2} \rangle^{2}}{d^{2\alpha}}
+\frac{\langle c_{2}, d_{1} \rangle^{2}}{d^{2\alpha}}
+\frac{\langle c_{2}, d_{2} \rangle^{2}}{d^{2\alpha}}
+\tau_{Y}^{2}\sum_{i=3}^{d} \frac{\| c_{1} \|_{2}^{2} \| z_{Yi\bullet} \|_{2}^{2}}{d^{2\alpha}}\\
&+\tau_{Y}^{2}\sum_{i=3}^{d} \frac{\| c_{2} \|_{2}^{2} \| z_{Yi\bullet} \|_{2}^{2}}{d^{2\alpha}}
+\tau_{X}^{2}\sum_{i=3}^{d} \frac{\| z_{Xi\bullet} \|_{2}^{2} \| d_{1} \|_{2}^{2}}{d^{2\alpha}}\\
&+\tau_{X}^{2}\sum_{i=3}^{d} \frac{\| z_{Xi\bullet}\|_{2}^{2} \|d_{2} \|_{2}^{2}}{d^{2\alpha}}
+\tau_{X}^{2}\tau_{Y}^{2}\sum_{i=3}^{d}\sum_{j=3}^{d} \frac{\| z_{Xi\bullet}\|_{2}^{2} \| z_{Yj\bullet} \|_{2}^{2}}{d^{2\alpha}}.
\end{align*}
Since $\langle c_{1}, d_{2} \rangle^{2}$, $\langle c_{2}, d_{1} \rangle^{2}$ are $O_{P}(\sqrt{d^{\alpha}})$, and $\langle c_{2}, d_{2} \rangle^{2}$ is $O_{P}(1)$,
\begin{align*}
\frac{\langle c_{1}, d_{2} \rangle^{2}}{d^{2\alpha}} \mathop{\longrightarrow}_{d \to \infty}^{p} 0, \,\, \frac{\langle c_{2}, d_{1} \rangle^{2}}{d^{2\alpha}} \mathop{\longrightarrow}_{d \to \infty}^{p} 0, \,\, \frac{\langle c_{2}, d_{2} \rangle^{2}}{d^{2\alpha}} \mathop{\longrightarrow}_{d \to \infty}^{p} 0.
\end{align*}
It is not hard to see that $\| z_{Xi\bullet} \|_{2}^{2} \sim \chi_{n}^{2}$ and $\| z_{Yi\bullet} \|_{2}^{2} \sim \chi_{n}^{2}$, since the elements of $z_{Xi\bullet}$ (respectively $z_{Yi\bullet}$) are the $i$th components of the i.i.d. multivariate standard normal samples of size $n$. Note that the magnitudes of $\| c_{1} \|_{2}^{2}, \| c_{2} \|_{2}^{2}, \| d_{1} \|_{2}^{2}$ and $\| d_{2} \|_{2}^{2}$ are of $O_{P}(d^{\alpha})$. Applying the law of large numbers, we have,
\begin{align*}
\tau_{Y}^{2}\sum_{i=3}^{d} \frac{\| c_{1} \|_{2}^{2} \| z_{Yi\bullet} \|_{2}^{2}}{d^{2\alpha}}
&=\frac{\tau_{Y}^{2}}{d^{\alpha-1}} \left \| \frac{c_{1}}{\sqrt{d^{\alpha}}} \right \|_{2}^{2} \sum_{i=3}^{d} \frac{\| z_{Yi\bullet} \|_{2}^{2}}{d}\\
&\mathop{\longrightarrow}_{d \to \infty}^{p} 0 \times O_{P}(1) \times E(\chi_{n}^{2}) = 0,\\
\tau_{Y}^{2}\sum_{i=3}^{d} \frac{\| c_{2} \|_{2}^{2} \| z_{Yi\bullet} \|_{2}^{2}}{d^{2\alpha}}
&=\frac{\tau_{Y}^{2}}{d^{\alpha-1}} \left \| \frac{c_{2}}{\sqrt{d^{\alpha}}} \right \|_{2}^{2} \sum_{i=3}^{d} \frac{\| z_{Yi\bullet} \|_{2}^{2}}{d}\\
&\mathop{\longrightarrow}_{d \to \infty}^{p} 0 \times 0 \times E(\chi_{n}^{2}) = 0,\\
\tau_{X}^{2}\sum_{i=3}^{d} \frac{\| d_{1} \|_{2}^{2} \| z_{Xi\bullet} \|_{2}^{2}}{d^{2\alpha}}
&=\frac{\tau_{X}^{2}}{d^{\alpha-1}} \left \| \frac{d_{1}}{\sqrt{d^{\alpha}}} \right \|_{2}^{2} \sum_{i=3}^{d} \frac{\| z_{Xi\bullet} \|_{2}^{2}}{d}\\
&\mathop{\longrightarrow}_{d \to \infty}^{p} 0 \times O_{P}(1) \times E(\chi_{n}^{2}) = 0,\\
\tau_{X}^{2}\sum_{i=3}^{d} \frac{\| d_{2} \|_{2}^{2} \| z_{Xi\bullet} \|_{2}^{2}}{d^{2\alpha}}
&=\frac{\tau_{X}^{2}}{d^{\alpha-1}} \left \| \frac{d_{2}}{\sqrt{d^{\alpha}}} \right \|_{2}^{2} \sum_{i=3}^{d} \frac{\| z_{Xi\bullet} \|_{2}^{2}}{d}\\
&\mathop{\longrightarrow}_{d \to \infty}^{p} 0 \times 0 \times E(\chi_{n}^{2}) = 0.
\end{align*}
Using $\alpha > 1$ and the law of large numbers,
\begin{align*}
\tau_{X}^{2}\tau_{Y}^{2}\sum_{i=3}^{d}\sum_{j=3}^{d} \frac{\| z_{Xi\bullet}\|_{2}^{2} \| z_{Yj\bullet} \|_{2}^{2}}{d^{2\alpha}}&=\frac{\tau_{X}^{2}\tau_{Y}^{2}}{d^{2\alpha-2}} \sum_{i=3}^{d} \frac{\| z_{Xi\bullet} \|_{2}^{2}}{d} \sum_{j=3}^{d} \frac{\| z_{Yj\bullet} \|_{2}^{2}}{d}\\
&\mathop{\longrightarrow}_{d \to \infty}^{p} 0 \times E(\chi_{n}^{2}) \times E(\chi_{n}^{2}) = 0.
\end{align*}
Therefore,
\begin{align*}
\left\| \frac{n\hat{\mathbf{\Sigma}}^{(d)}_{XY}}{d^{\alpha}}-\mathbf{M}_{XY}^{(d)} \right\|_{F}^{2} \mathop{\longrightarrow}_{d \to \infty}^{p} 0.
\end{align*}
\end{proof}
\begin{lem}[CCA HDLSS Asymptotic lemma 2.]
\label{lem3}
Let $\mathbf{M}_{XY}$ be the matrix defined in Lemma~\ref{lem2}. Let $\hat{\lambda}_{XY1}^{(d)}$, $\hat{\eta}_{X1}^{(d)}$ and $\hat{\eta}_{Y1}^{(d)}$ be the first sample singular value and singular vectors from~(\ref{sec3:equ20}). Then, for $\alpha > 1$, 
\begin{align*}
\left\| \frac{n\hat{\lambda}_{XY1}^{(d)}}{d^{\alpha}} \hat{\eta}_{X1}^{(d)} \left( \hat{\eta}_{Y1}^{(d)} \right)^{T}-\mathbf{M}_{XY} \right\|_{F}^{2} \mathop{\longrightarrow}_{d \to \infty}^{p} 0.
\end{align*}
\end{lem}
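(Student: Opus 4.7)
The plan is to combine Lemma~\ref{lem2} with the Eckart--Young theorem. Set $A_d := n\hat{\mathbf{\Sigma}}^{(d)}_{XY}/d^{\alpha}$, $\hat{\mathbf{M}}_d := (n\hat{\lambda}_{XY1}^{(d)}/d^{\alpha}) \,\hat{\eta}_{X1}^{(d)} (\hat{\eta}_{Y1}^{(d)})^{T}$, and $\mathbf{M} := \mathbf{M}_{XY}$. The point is that $\mathbf{M}$ is itself a rank-one matrix (its only nonzero entry is $C_{XY}$ in the top-left corner), while $\hat{\mathbf{M}}_d$, by construction from the SVD in~(\ref{sec3:equ20}), is the best rank-one approximation to $A_d$ in Frobenius norm.

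The argument would then proceed in three short steps. First, apply the triangle inequality:
\begin{align*}
\left\| \hat{\mathbf{M}}_d - \mathbf{M} \right\|_F \le \left\| \hat{\mathbf{M}}_d - A_d \right\|_F + \left\| A_d - \mathbf{M} \right\|_F.
\end{align*}
Second, use the Eckart--Young optimality: since $\mathbf{M}$ has rank one and $\hat{\mathbf{M}}_d$ is the closest rank-one matrix to $A_d$ in Frobenius norm, one has
\begin{align*}
\left\| \hat{\mathbf{M}}_d - A_d \right\|_F \le \left\| \mathbf{M} - A_d \right\|_F.
\end{align*}
Combining the two gives $\| \hat{\mathbf{M}}_d - \mathbf{M} \|_F \le 2 \| A_d - \mathbf{M} \|_F$. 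Third, apply Lemma~\ref{lem2}, which asserts exactly that $\| A_d - \mathbf{M} \|_F^2 \to 0$ in probability, hence $\| A_d - \mathbf{M} \|_F \to 0$ in probability, and therefore $\| \hat{\mathbf{M}}_d - \mathbf{M} \|_F^2 \to 0$ in probability as well, which is the claim.

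There is essentially no serious obstacle: once one recognizes that $\mathbf{M}_{XY}$ has rank one, Eckart--Young reduces the entire argument to a triangle inequality on top of the already-proved Lemma~\ref{lem2}. The only subtlety worth flagging is the degenerate case $C_{XY} = 0$, in which $\mathbf{M}$ has rank zero rather than one; but the inequality $\| \hat{\mathbf{M}}_d - A_d \|_F \le \| \mathbf{M} - A_d \|_F$ still holds (the zero matrix is a rank-one-or-less competitor, and $\hat{\mathbf{M}}_d$ is the best such), so the bound $\| \hat{\mathbf{M}}_d - \mathbf{M} \|_F \le 2 \| A_d - \mathbf{M} \|_F$ is unaffected. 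Note that nothing in the argument requires separate control of $\hat{\lambda}_{XY1}^{(d)}/d^{\alpha}$ or of the singular vectors $\hat{\eta}_{X1}^{(d)}, \hat{\eta}_{Y1}^{(d)}$ individually — the outer product as a whole is what converges, which is the object actually needed for the subsequent analysis of CCA.
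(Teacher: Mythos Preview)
Your proof is correct and follows essentially the same approach as the paper: triangle inequality plus Eckart--Young optimality of the rank-one SVD truncation, reducing everything to Lemma~\ref{lem2}. The only cosmetic difference is that the paper routes through the intermediate matrix $\mathbf{M}_{XY}^{(d)}$ (with $\langle c_1,d_1\rangle/d^\alpha$ in place of its limit $C_{XY}$) before passing to $\mathbf{M}_{XY}$, whereas you apply Eckart--Young directly against $\mathbf{M}_{XY}$; your version is slightly more streamlined, and your remark on the degenerate case $C_{XY}=0$ is a nice touch the paper omits.
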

\begin{proof}
Using $\mathbf{M}_{XY}^{(d)}$ defined in Lemma~\ref{lem2} and the triangle inequality,
\begin{align*}
&\left \| \frac{n\hat{\lambda}_{XY1}^{(d)}}{d^{\alpha}} \hat{\eta}_{X1}^{(d)} \left( \hat{\eta}_{Y1}^{(d)} \right)^{T} - \mathbf{M}_{XY}^{(d)} \right \|_{F}\\
&\quad \quad \quad =\left \| \left( \frac{n\hat{\mathbf{\Sigma}}^{(d)}_{XY}}{d^{\alpha}} - \mathbf{M}_{XY}^{(d)} \right) - \left( \frac{n\hat{\mathbf{\Sigma}}^{(d)}_{XY}}{d^{\alpha}} - \frac{\hat{\lambda}_{XY1}^{(d)}}{d^{\alpha}} \hat{\eta}_{X1}^{(d)} \left( \hat{\eta}_{Y1}^{(d)} \right)^{T} \right) \right \|_{F}\\
&\quad \quad \quad \le \left \| \frac{n\hat{\mathbf{\Sigma}}^{(d)}_{XY}}{d^{\alpha}} - \mathbf{M}_{XY}^{(d)} \right \|_{F} + \left \| \frac{n\hat{\mathbf{\Sigma}}^{(d)}_{XY}}{d^{\alpha}} - \frac{\hat{\lambda}_{XY1}^{(d)}}{d^{\alpha}} \hat{\eta}_{X1}^{(d)} \left( \hat{\eta}_{Y1}^{(d)} \right)^{T} \right \|_{F}.
\end{align*}
By Lemma~\ref{lem2},
\begin{align*}
\left \| \frac{n\hat{\mathbf{\Sigma}}^{(d)}_{XY}}{d^{\alpha}} - \mathbf{M}_{XY}^{(d)} \right \|_{F}^{2} \mathop{\longrightarrow}_{d \to \infty}^{p} 0.
\end{align*}
Write $\mathbf{M}_{XY}^{(d)}$ as,
\begin{align*}
\mathbf{M}_{XY}^{(d)} = \frac{\langle c_{1}, d_{1} \rangle}{d^{\alpha}} e_{1}^{(d)} \left( e_{1}^{(d)} \right)^{T}.
\end{align*}
Since the first sample singular value $\hat{\lambda}_{XY1}^{(d)}$ and singular vectors $\hat{\eta}_{X1}^{(d)}$ and $\hat{\eta}_{Y1}^{(d)}$ provide the best rank-1 approximation to $\hat{\lambda}_{XY1}^{(d)}/d^{\alpha}$,
\begin{align*}
\left\| \frac{\hat{\mathbf{\Sigma}}^{(d)}_{XY}}{d^{\alpha}} - \frac{\hat{\lambda}_{XY1}^{(d)}}{d^{\alpha}} \hat{\eta}_{X1}^{(d)} \left( \hat{\eta}_{Y1}^{(d)} \right)^{T} \right\|_{F}^{2}
&\le \left \| \frac{\hat{\mathbf{\Sigma}}^{(d)}_{XY}}{d^{\alpha}} - \frac{\langle c_{1}, d_{1} \rangle}{nd^{\alpha}} e_{1}^{(d)} \left( e_{1}^{(d)} \right)^{T} \right \|_{F}^{2}\\
&= \left \| \frac{\hat{\mathbf{\Sigma}}^{(d)}_{XY}}{d^{\alpha}} - \frac{\mathbf{M}_{XY}^{(d)}}{n} \right \|_{F}^{2}\\ 
&\mathop{\longrightarrow}_{d \to \infty}^{p} 0.
\end{align*}
\end{proof}
\begin{lem}[CCA HDLSS Asymptotic lemma 3.]
\label{lem4}
For $\alpha > 1$, the first singular value $\hat{\lambda}_{XY1}^{(d)}$ and singular vectors $\hat{\eta}_{X1}^{(d)}$ and $\hat{\eta}_{Y1}^{(d)}$ converge in probability to the following quantities as $d \to \infty$,
\begin{align*}
\frac{n\hat{\lambda}_{XY1}^{(d)}}{d^{\alpha}} \mathop{\longrightarrow}_{d \to \infty}^{p} C_{XY}, \,\, \left\| \hat{\eta}_{X1}^{(d)}-e_{1}^{(d)} \right\|_{2}^{2} \mathop{\longrightarrow}_{d \to \infty}^{p} 0, \,\, \left\| \hat{\eta}_{Y1}^{(d)}-e_{1}^{(d)} \right\|_{2}^{2} \mathop{\longrightarrow}_{d \to \infty}^{p} 0,
\end{align*}
where $C_{XY}$ is defined in Lemma~\ref{lem2}.
\end{lem}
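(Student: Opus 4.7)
The plan is to extract the three individual convergences in Lemma~\ref{lem4} from the single rank-one matrix convergence already established in Lemma~\ref{lem3}. Since $\mathbf{M}_{XY}=C_{XY}\,e_{1}^{(d)}(e_{1}^{(d)})^{T}$ is itself rank one (with its only nonzero singular value equal to $|C_{XY}|$ and its left/right singular directions both equal to $e_{1}^{(d)}$ up to sign), and since $\frac{n\hat{\lambda}_{XY1}^{(d)}}{d^{\alpha}}\hat{\eta}_{X1}^{(d)}(\hat{\eta}_{Y1}^{(d)})^{T}$ is also rank one with unit left and right vectors, both the singular value and the singular directions of the left-hand side should be pinned down by the Frobenius-norm convergence.

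First, for the singular value, I would use that for any rank-one matrix $s\,u v^{T}$ with $\|u\|_{2}=\|v\|_{2}=1$ one has $\|s uv^{T}\|_{F}=|s|$. Applying this to both sides and invoking the reverse triangle inequality,
\begin{align*}
\left|\frac{n\hat{\lambda}_{XY1}^{(d)}}{d^{\alpha}}-|C_{XY}|\right|
\le \left\|\frac{n\hat{\lambda}_{XY1}^{(d)}}{d^{\alpha}}\hat{\eta}_{X1}^{(d)}(\hat{\eta}_{Y1}^{(d)})^{T}-\mathbf{M}_{XY}\right\|_{F} \mathop{\longrightarrow}_{d \to \infty}^{p} 0,
\end{align*}
so $n\hat{\lambda}_{XY1}^{(d)}/d^{\alpha}\to |C_{XY}|$ in probability (and to $C_{XY}$ itself once it is checked, from the explicit form of $\langle c_{1},d_{1}\rangle/d^{\alpha}$ involving the leading coefficients $a_{1},a_{3},b_{1},b_{3}$ of order $\sqrt{d^{\alpha}}$, that $C_{XY}\ge 0$ almost surely).

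Next, for the singular vectors I would apply the rank-one convergence to the deterministic unit vector $e_{1}^{(d)}$. Since the Frobenius norm dominates the operator norm,
\begin{align*}
\left\|\frac{n\hat{\lambda}_{XY1}^{(d)}}{d^{\alpha}}\bigl(\hat{\eta}_{Y1}^{(d)}\bigr)^{T}e_{1}^{(d)}\,\hat{\eta}_{X1}^{(d)}-C_{XY}\,e_{1}^{(d)}\right\|_{2} \mathop{\longrightarrow}_{d \to \infty}^{p} 0.
\end{align*}
Taking Euclidean norms, using $\|\hat{\eta}_{X1}^{(d)}\|_{2}=1$ and dividing by $n\hat{\lambda}_{XY1}^{(d)}/d^{\alpha}\to|C_{XY}|>0$ (valid since $C_{XY}$ is nondegenerate and nonzero a.s., so Slutsky applies), I would conclude $|(\hat{\eta}_{Y1}^{(d)})^{T}e_{1}^{(d)}|\to 1$ in probability. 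Because both vectors have unit norm, this forces $\|\hat{\eta}_{Y1}^{(d)}-e_{1}^{(d)}\|_{2}^{2}\to 0$ under the standard SVD sign convention that aligns the leading singular direction with the limiting pattern. The same argument applied to the transposed identity (left-multiplication by $(e_{1}^{(d)})^{T}$) gives $\|\hat{\eta}_{X1}^{(d)}-e_{1}^{(d)}\|_{2}^{2}\to 0$.

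The main obstacle I anticipate is not the algebraic extraction but the sign/normalization issue: SVD determines $(\hat{\eta}_{X1},\hat{\eta}_{Y1})$ only up to a joint sign flip, and $C_{XY}$ is a random variable rather than a deterministic positive constant, so one must verify $C_{XY}\ge 0$ (or equivalently fix the sign convention so that $\hat{\eta}_{X1}^{(d)}$ and $\hat{\eta}_{Y1}^{(d)}$ are chosen to have positive inner product with $e_{1}^{(d)}$) before writing the limit as $C_{XY}$ rather than $|C_{XY}|$. A secondary subtlety is that $|C_{XY}|$ can in principle be arbitrarily close to $0$, so the division step in the singular-vector argument requires a truncation or a conditional-on-$\{|C_{XY}|>\varepsilon\}$ argument together with letting $\varepsilon\downarrow 0$; both are handled by Slutsky since $C_{XY}$ has a continuous distribution with no atom at zero.
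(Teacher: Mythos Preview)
Your argument is correct and takes a genuinely different, more streamlined route than the paper. The paper works in the opposite order: it first establishes the singular-vector convergence by inspecting the $(2{:}d,2{:}d)$ sub-block of the rank-one difference, then uses two contradiction arguments to show (i) $\hat{\lambda}_{XY1}^{(d)}/d^{\alpha}$ cannot tend to zero (else the full Frobenius norm would collapse, contradicting Lemma~\ref{lem3}) and (ii) both truncated vectors $\tilde{\eta}_{X1}^{(d)}$ and $\tilde{\eta}_{Y1}^{(d)}$ must vanish (else the first row or column of the difference would not). Only afterwards does it recover the singular-value limit by sandwiching the matrix identity between $(\hat{\eta}_{Y1}^{(d)})^{T}$ and $\hat{\eta}_{X1}^{(d)}$ via unitary invariance. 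You invert this logic: the reverse triangle inequality for $\|\cdot\|_{F}$ gives the singular value in one line, and then right- and left-multiplication by $e_{1}^{(d)}$ (using $\|\cdot\|_{op}\le\|\cdot\|_{F}$) extracts the two directional limits. What you gain is brevity and a clean decoupling of the three claims; what the paper's entrywise dissection buys is explicit information about the individual coordinates $\hat{\eta}_{X1}^{(d)}(j)$, which it reuses downstream in Lemma~\ref{lem5}. Your treatment of the sign ambiguity and of the division by a random limit $|C_{XY}|$ via a truncation-plus-Slutsky argument is also more careful than the paper's, which tacitly assumes $C_{XY}\ge 0$ and the aligned sign convention without comment.
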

\begin{proof}
Let $\tilde{\eta}_{X1}^{(d)}$ and $\tilde{\eta}_{Y1}^{(d)}$ be $\hat{\eta}_{X1}^{(d)}$ and $\hat{\eta}_{Y1}^{(d)}$ where their first entries are set to 0. Let $\hat{\eta}_{X1}^{(d)}(i)$ and $\hat{\eta}_{Y1}^{(d)}(j)$ be the $i$th and $j$th entries of $\hat{\eta}_{X1}^{(d)}$ and $\hat{\eta}_{Y1}^{(d)}$. By Lemma~\ref{lem3},
\begin{align*}
\left( \frac{\hat{\lambda}_{XY1}^{(d)}}{d^{\alpha}} \right)^{2} \left\| \tilde{\eta}_{X1}^{(d)} \right\|_{2}^{2} \left\| \tilde{\eta}_{Y1}^{(d)} \right\|_{2}^{2}
&=\left( \frac{\hat{\lambda}_{XY1}^{(d)}}{d^{\alpha}} \right)^{2} \sum_{i=2}^{d} \left( \hat{\eta}_{X1}^{(d)}(i) \right)^{2} \sum_{j=2}^{d} \left( \hat{\eta}_{Y1}^{(d)}(j) \right)^{2}\\
&=\left( \frac{\hat{\lambda}_{XY1}^{(d)}}{d^{\alpha}} \right)^{2} \sum_{i=2}^{d} \sum_{j=2}^{d}  \left( \hat{\eta}_{X1}^{(d)}(i) \right)^{2} \left( \hat{\eta}_{Y1}^{(d)}(j) \right)^{2}\\
&\mathop{\longrightarrow}_{d \to \infty}^{p} 0.
\end{align*}
First, we show that $(\hat{\lambda}_{XY1}^{(d)}/d^{\alpha})^{2} > 0$ in probability. Suppose that $(\hat{\lambda}_{XY1}^{(d)}/d^{\alpha})^{2}$ converges in probability to 0. Then, noting that $\| \hat{\eta}_{X1}^{(d)} \|_{2}^{2} = 1$ and $\| \hat{\eta}_{Y1}^{(d)} \|_{2}^{2} = 1$,
\begin{align*}
\left\| \frac{\hat{\lambda}_{XY1}^{(d)}}{d^{\alpha}} \hat{\eta}_{X1}^{(d)} \left( \hat{\eta}_{Y1}^{(d)} \right)^{T} \right\|_{F}^{2} &= \left( \frac{\hat{\lambda}_{XY1}^{(d)}}{d^{\alpha}} \right)^{2} \sum_{i=1}^{d} \sum_{j=1}^{d}  \left( \hat{\eta}_{X1}^{(d)}(i) \right)^{2} \left( \hat{\eta}_{Y1}^{(d)}(j) \right)^{2}\\
&=\left( \frac{\hat{\lambda}_{XY1}^{(d)}}{d^{\alpha}} \right)^{2} \sum_{i=1}^{d} \left( \hat{\eta}_{X1}^{(d)}(i) \right)^{2} \sum_{j=1}^{d} \left( \hat{\eta}_{Y1}^{(d)}(j) \right)^{2}\\
&=\left( \frac{\hat{\lambda}_{XY1}^{(d)}}{d^{\alpha}} \right)^{2} \left\| \hat{\eta}_{X1}^{(d)} \right\|_{2}^{2} \left\| \hat{\eta}_{Y1}^{(d)} \right\|_{2}^{2}\\
&=\left( \frac{\hat{\lambda}_{XY1}^{(d)}}{d^{\alpha}} \right)^{2}\\
&\mathop{\longrightarrow}_{d \to \infty}^{p} 0,
\end{align*}
which contradicts to Lemma~\ref{lem3} (note that the limiting matrix $M$ is not a degenerate matrix). Therefore, 
\begin{align*}
\left\| \tilde{\eta}_{X1}^{(d)} \right\|_{2}^{2} \mathop{\longrightarrow}_{d \to \infty}^{p} 0, \,\, \text{or} \,\, \left\| \hat{\eta}_{Y1}^{(d)} \right\|_{2}^{2} \mathop{\longrightarrow}_{d \to \infty}^{p} 0.
\end{align*}
We want to show that both $\| \tilde{\eta}_{X1}^{(d)} \|_{2}^{2}$ and $\| \tilde{\eta}_{Y1}^{(d)} \|_{2}^{2}$ converge to 0 in probability. Suppose that $\| \tilde{\eta}_{X1}^{(d)} \|_{2}^{2}$ converges to 0 and $\| \tilde{\eta}_{Y1}^{(d)} \|_{2}^{2} > 0$ in probability. Since the norm of $\hat{\eta}_{X1}^{(d)}$ is 1, its first entry $\hat{\eta}_{X1}^{(d)}(1)$ converges in probability to, 
\begin{align*}
\left( \hat{\eta}_{X1}^{(d)}(1) \right)^{2} = 1 - \left\| \tilde{\eta}_{X1}^{(d)} \right\|_{2}^{2} \mathop{\longrightarrow}_{d \to \infty}^{p} 1.
\end{align*}
Then, the squared sum of the entries in the first row of $(\hat{\lambda}_{XY1}^{(d)}/d^{\alpha}) \hat{\eta}_{X1}^{(d)} \left( \hat{\eta}_{Y1}^{(d)} \right)^{T}$ with the first entry excluded becomes,
\begin{align*}
\left\| \frac{\hat{\lambda}_{XY1}^{(d)}}{d^{\alpha}} \hat{\eta}_{X1}^{(d)}(1) \tilde{\eta}_{Y1}^{(d)} \right\|_{2}^{2} &= \left( \frac{\hat{\lambda}_{XY1}^{(d)}}{d^{\alpha}} \right)^{2} \left( \hat{\eta}_{X1}^{(d)}(1) \right)^{2} \sum_{j=2}^{d} \left( \hat{\eta}_{Y1}^{(d)}(j) \right)^{2}
\mathop{\longrightarrow}_{d \to \infty}^{p} > 0,
\end{align*}
which contradicts to Lemma~\ref{lem3}, according to which the above quantity should converge to 0. Argument is similar for the case where $\| \tilde{\eta}_{Y1}^{(d)} \|_{2}^{2}$ converges to 0 and $\| \tilde{\eta}_{X1}^{(d)} \|_{2}^{2} > 0$ in probability. Hence we have
\begin{align*}
\left\| \tilde{\eta}_{X1}^{(d)} \right\|_{2}^{2} \mathop{\longrightarrow}_{d \to \infty}^{p}  0, \,\, \text{and} \,\, \left\| \hat{\eta}_{Y1}^{(d)} \right\|_{2}^{2} \mathop{\longrightarrow}_{d \to \infty}^{p} 0.
\end{align*}
Note that, since the norm of $\hat{\eta}_{Y1}^{(d)}$ is 1,
\begin{align*}
\left( \hat{\eta}_{Y1}^{(d)}(1) \right)^{2} = 1 - \left\| \tilde{\eta}_{Y1}^{(d)} \right\|_{2}^{2} \mathop{\longrightarrow}_{d \to \infty}^{p} 1.
\end{align*}
Therefore
\begin{align*}
\left\| \hat{\eta}_{X1}^{(d)} - e_{1}^{(d)} \right\|_{2}^{2} &= \left( \hat{\eta}_{X1}^{(d)}(1) - 1 \right)^{2} + \left\| \tilde{\eta}_{X1}^{(d)}(i) \right\|_{2}^{2}
\mathop{\longrightarrow}_{d \to \infty}^{p} 0,\\
\left\| \hat{\eta}_{Y1}^{(d)} - e_{1}^{(d)} \right\|_{2}^{2} &= \left( \hat{\eta}_{Y1}^{(d)}(1) - 1 \right)^{2} + \left\| \tilde{\eta}_{Y1}^{(d)}(i) \right\|_{2}^{2}
\mathop{\longrightarrow}_{d \to \infty}^{p} 0.
\end{align*}
To calculate the limiting value of $\hat{\lambda}_{XY1}^{(d)}/d^{\alpha}$ as $d \to \infty$, using the unitary invariance property of the Frobenius norm and the previous result about the limiting vectors of $\hat{\eta}_{X1}^{(d)}$ and $\hat{\eta}_{Y1}^{(d)}$ as $d \to \infty$,
\begin{align*}
&\left\| \frac{n\hat{\lambda}_{XY1}^{(d)}}{d^{\alpha}} \hat{\eta}_{X1}^{(d)} \left( \hat{\eta}_{Y1}^{(d)} \right)^{T} - \mathbf{M}_{XY}^{(d)} \right\|_{F}^{2}\\
&= \left\| \frac{n\hat{\lambda}_{XY1}^{(d)}}{d^{\alpha}} \hat{\eta}_{X1}^{(d)} \left( \hat{\eta}_{Y1}^{(d)} \right)^{T} - \frac{\langle c_{1}, d_{1} \rangle}{d^{\alpha}} e_{1}^{(d)} \left( e_{1}^{(d)} \right)^{T} \right\|_{F}^{2}\\
&=\left\| \frac{n\hat{\lambda}_{XY1}^{(d)}}{d^{\alpha}} \left( \hat{\eta}_{Y1}^{(d)} \right)^{T} \hat{\eta}_{X1}^{(d)} \left( \hat{\eta}_{Y1}^{(d)} \right)^{T} \hat{\eta}_{X1}^{(d)} - \frac{\langle c_{1}, d_{1} \rangle}{d^{\alpha}} \left( \hat{\eta}_{Y1}^{(d)} \right)^{T} e_{1}^{(d)} \left( e_{1}^{(d)} \right)^{T} \hat{\eta}_{X1}^{(d)} \right\|_{F}^{2}\\
&=\left\| \frac{n\hat{\lambda}_{XY1}^{(d)}}{d^{\alpha}} - \frac{\langle c_{1}, d_{1} \rangle}{d^{\alpha}} \left( \hat{\eta}_{Y1}^{(d)} \right)^{T} e_{1}^{(d)} \left( e_{1}^{(d)} \right)^{T} \hat{\eta}_{X1}^{(d)} \right\|_{F}^{2}\\
&=\left( \frac{\langle c_{1}, d_{1} \rangle}{d^{\alpha}} \right)^{2} \left\| \frac{n\hat{\lambda}_{XY1}^{(d)}/d^{\alpha}}{\langle c_{1}, d_{1} \rangle/d^{\alpha}} - \left( \hat{\eta}_{Y1}^{(d)} \right)^{T} e_{1}^{(d)} \left( e_{1}^{(d)} \right)^{T} \hat{\eta}_{X1}^{(d)} \right\|_{F}^{2}\\
&\mathop{\longrightarrow}_{d \to \infty}^{p} \text{A}^{2} \left( \frac{\text{B}}{\text{C}} - 1 \right)^{2}.
\end{align*}
where,
\begin{align*}
\text{A}=\lim_{d \to \infty} \frac{\langle c_{1}, d_{1} \rangle}{d^{\alpha}}, \,\, \text{B}=\lim_{d \to \infty} \frac{n\hat{\lambda}_{XY1}^{(d)}}{d^{\alpha}},\,\, \text{C}=\lim_{d \to \infty} \frac{\langle c_{1}, d_{1} \rangle}{d^{\alpha}}.
\end{align*}
By Lemma~\ref{thm2},
\begin{align*}
\left \| \frac{n\hat{\lambda}_{XY1}^{(d)}}{d^{\alpha}} \hat{\eta}_{X1}^{(d)} \hat{\eta}_{Y1}^{(d)} - \mathbf{M}_{XY}^{(d)} \right \|_{F}^{2} \mathop{\longrightarrow}_{d \to \infty}^{p} 0.
\end{align*}
Since $\text{A} \asymp O_{p}(1)$,
\begin{align*}
\left( \frac{B}{C} - 1 \right)^{2} \mathop{\longrightarrow}_{d \to \infty}^{p} 0.
\end{align*}
Therefore,
\begin{align*}
\frac{B}{C} \mathop{\longrightarrow}_{d \to \infty}^{p} 1.
\end{align*}
\end{proof}
\begin{lem}[CCA HDLSS Asymptotic lemma 4.]
\label{lem5}
The magnitudes of the entries $\hat{\eta}_{X1}^{(d)}(i)$ and $\hat{\eta}_{Y1}^{(d)}(i)$, $i=2,3,..,d$, of the first sample singular vectors $\hat{\eta}_{X1}^{(d)}$ and $\hat{\eta}_{Y1}^{(d)}$ are of $O_{p}(1/\sqrt{d^{\alpha}})$ as $d \to \infty$.
\end{lem}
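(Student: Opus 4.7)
\emph{Plan.} My approach is to read off each coordinate of $\hat{\eta}_{X1}^{(d)}$ from the singular-vector identity $\hat{\mathbf{\Sigma}}_{XY}^{(d)}\hat{\eta}_{Y1}^{(d)} = \hat{\lambda}_{XY1}^{(d)}\hat{\eta}_{X1}^{(d)}$. Lemma~\ref{lem4} already gives $n\hat{\lambda}_{XY1}^{(d)}/d^{\alpha} \overset{P}{\longrightarrow} C_{XY} > 0$, so the denominator is $\asymp d^{\alpha}$, and the task reduces to proving that, for each $i \ge 2$, the $i$-th entry of $n\hat{\mathbf{\Sigma}}_{XY}^{(d)}\hat{\eta}_{Y1}^{(d)}$ is $O_{P}(\sqrt{d^{\alpha}})$.

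For $i \ge 3$, the $i$-th row of $n\hat{\mathbf{\Sigma}}_{XY}^{(d)}$ read from~(\ref{sec3:equ19}) is $[\tau_{X}\langle z_{Xi\bullet}, d_{1}\rangle,\, \tau_{X}\langle z_{Xi\bullet}, d_{2}\rangle,\, \tau_{X}\tau_{Y}\langle z_{Xi\bullet}, z_{Y3\bullet}\rangle,\ldots,\tau_{X}\tau_{Y}\langle z_{Xi\bullet}, z_{Yd\bullet}\rangle]$. The row $z_{Xi\bullet}$ is independent of $d_{1},d_{2}$ (which use only $z_{X1\bullet},z_{X2\bullet},z_{Y1\bullet},z_{Y2\bullet}$) and of $\{z_{Yj\bullet}\}_{j \ge 3}$, so conditioning on those vectors together with the orders $\|d_{1}\|_{2}^{2} = O_{P}(d^{\alpha})$ and $\|d_{2}\|_{2}^{2} = O_{P}(1)$ (both read off from~(\ref{sec3:equ17})--(\ref{sec3:equ18})) gives $\langle z_{Xi\bullet}, d_{1}\rangle = O_{P}(\sqrt{d^{\alpha}})$, $\langle z_{Xi\bullet}, d_{2}\rangle = O_{P}(1)$, and $\langle z_{Xi\bullet}, z_{Yj\bullet}\rangle = O_{P}(1)$ for each fixed $j$. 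Combined with Lemma~\ref{lem4}'s conclusions $\hat{\eta}_{Y1}^{(d)}(1) \overset{P}{\longrightarrow} 1$ and $\|\tilde{\eta}_{Y1}^{(d)}\|_{2} \overset{P}{\longrightarrow} 0$, the three summands in (row $i$)$\cdot\hat{\eta}_{Y1}^{(d)}$ are, respectively, $O_{P}(\sqrt{d^{\alpha}})$, $O_{P}(1)\cdot o_{P}(1)$, and a tail sum to be controlled.

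The tail sum $\tau_{X}\tau_{Y}\sum_{j=3}^{d}\langle z_{Xi\bullet}, z_{Yj\bullet}\rangle\,\hat{\eta}_{Y1}^{(d)}(j)$ is the most delicate piece and the main obstacle. I plan to apply Cauchy--Schwarz to bound its modulus by $\bigl(\sum_{j \ge 3}\langle z_{Xi\bullet}, z_{Yj\bullet}\rangle^{2}\bigr)^{1/2}\cdot\|\tilde{\eta}_{Y1}^{(d)}\|_{2}$. Conditionally on $z_{Xi\bullet}$, the variables $\langle z_{Xi\bullet}, z_{Yj\bullet}\rangle$ for $j \ge 3$ are i.i.d.\ $N(0,\|z_{Xi\bullet}\|_{2}^{2})$, so the first factor equals $\|z_{Xi\bullet}\|_{2}\sqrt{\chi^{2}_{d-2}} = O_{P}(\sqrt{d})$, while the second is $o_{P}(1)$ by Lemma~\ref{lem4}. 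The whole tail is therefore $o_{P}(\sqrt{d})$, and this is precisely where the hypothesis $\alpha > 1$ earns its keep: because $\sqrt{d} = o(\sqrt{d^{\alpha}})$, the tail is $o_{P}(\sqrt{d^{\alpha}})$ and is absorbed into the $O_{P}(\sqrt{d^{\alpha}})$ contribution from the first column. Assembling the three bounds gives the required $O_{P}(\sqrt{d^{\alpha}})$ numerator, and dividing by $n\hat{\lambda}_{XY1}^{(d)} \asymp d^{\alpha}$ yields $\hat{\eta}_{X1}^{(d)}(i) = O_{P}(1/\sqrt{d^{\alpha}})$.

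The case $i = 2$ is handled via row $2$ of $\hat{\mathbf{\Sigma}}_{XY}^{(d)}$: the dominant numerator contribution comes from $\langle c_{2}, d_{1}\rangle = O_{P}(\sqrt{d^{\alpha}})$ (arising from the cross terms $a_{5}b_{1}\|z_{X1\bullet}\|_{2}^{2}$ and $a_{7}b_{3}\|z_{Y1\bullet}\|_{2}^{2}$), the $\langle c_{2}, d_{2}\rangle\,\hat{\eta}_{Y1}^{(d)}(2)$ term is $O_{P}(1)\cdot o_{P}(1)$, and the tail $\sum_{j \ge 3}\langle c_{2}, z_{Yj\bullet}\rangle\,\hat{\eta}_{Y1}^{(d)}(j)$ is controlled identically, using $\|c_{2}\|_{2} = O_{P}(1)$ to get $o_{P}(\sqrt{d}) = o_{P}(\sqrt{d^{\alpha}})$. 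Finally, the bounds on the entries $\hat{\eta}_{Y1}^{(d)}(j)$ for $j \ge 2$ follow by the identical argument applied to the transposed identity $(\hat{\mathbf{\Sigma}}_{XY}^{(d)})^{T}\hat{\eta}_{X1}^{(d)} = \hat{\lambda}_{XY1}^{(d)}\hat{\eta}_{Y1}^{(d)}$, with the roles of $X$ and $Y$ swapped throughout.
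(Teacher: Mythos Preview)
Your proof is correct and takes a different, cleaner route than the paper's. The paper argues entry-by-entry through the SVD expansion of $\hat{\mathbf{\Sigma}}_{XY}^{(d)}$: it picks, e.g., the $(1,3)$ entry $\tau_{Y}\langle c_{1}, z_{Y3\bullet}\rangle/n = O_{P}(\sqrt{d^{\alpha}})$, writes it as $\sum_{i}\hat{\lambda}_{XYi}^{(d)}\hat{\eta}_{Xi}^{(d)}(1)\hat{\eta}_{Yi}^{(d)}(3)$, and then asserts that the $i=1$ term dominates, backing out $\hat{\eta}_{Y1}^{(d)}(3)=O_{P}(1/\sqrt{d^{\alpha}})$ from $\hat{\lambda}_{XY1}^{(d)}\asymp d^{\alpha}$ and $\hat{\eta}_{X1}^{(d)}(1)\to 1$. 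That sketch leaves the control of the $i\ge 2$ contributions implicit, and Lemma~\ref{lem6} (which would supply such control) has not yet been proved at this stage. Your approach sidesteps this completely: by using the exact singular-vector relation $\hat{\mathbf{\Sigma}}_{XY}^{(d)}\hat{\eta}_{Y1}^{(d)} = \hat{\lambda}_{XY1}^{(d)}\hat{\eta}_{X1}^{(d)}$, only the first singular triple appears, so no argument about the remaining $\hat{\lambda}_{XYi}^{(d)}$ is needed. Your Cauchy--Schwarz bound on the tail $\sum_{j\ge 3}\langle z_{Xi\bullet}, z_{Yj\bullet}\rangle\,\hat{\eta}_{Y1}^{(d)}(j)$, yielding $O_{P}(\sqrt{d})\cdot o_{P}(1)=o_{P}(\sqrt{d^{\alpha}})$ via the hypothesis $\alpha>1$, is the key step that makes the argument fully self-contained; the paper never needs (and never states) this estimate because its version works with individual matrix entries rather than row inner products.
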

\begin{proof}
Let $\hat{\mathbf{\Sigma}}^{(d)}_{XY}(i,j)$ be the entry of the $i$th row and $j$th column of $\hat{\mathbf{\Sigma}}^{(d)}_{XY}$. Consider $\hat{\mathbf{\Sigma}}^{(d)}_{XY}(1,3)$, which is $O_{p}(\sqrt{d^{\alpha}})$. The contribution of $\hat{\lambda}_{XY1}^{(d)}$, $\hat{\eta}_{X1}^{(d)}$ and $\hat{\eta}_{Y1}^{(d)}$ to $\hat{\mathbf{\Sigma}}^{(d)}_{XY}(1,3)$ is,
\begin{align*}
\hat{\lambda}_{XY1}^{(d)} \hat{\eta}_{X1}^{(d)}(1) \hat{\eta}_{Y1}^{(d)}(3).
\end{align*}
From Lemma~\ref{lem4},
\begin{align*}
\hat{\lambda}_{XY1}^{(d)} \asymp O_{p}(d^{\alpha}), \,\, \hat{\eta}_{X1}^{(d)}(1) \mathop{\longrightarrow}_{d \to \infty}^{p} 1.
\end{align*}
Therefore,
\begin{align*}
\hat{\eta}_{Y1}^{(d)}(3) = O_{p}\left( \frac{1}{\sqrt{d^{\alpha}}} \right).
\end{align*}
Argument is similar for the rest of entries.
\end{proof}
\begin{lem}[CCA HDLSS Asymptotic lemma 5.]
\label{lem6}
The magnitudes of the sample singular values $\hat{\lambda}_{XYi}^{(d)}$ for $i=2,3,..,n$ are of $O_{p}(d)$.
\end{lem}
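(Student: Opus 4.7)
\emph{Proof plan.} The idea is to write $\hat{\lambda}_{XYi}^{(d)}=\hat{\eta}_{Xi}^{(d)T}\hat{\mathbf{\Sigma}}_{XY}^{(d)}\hat{\eta}_{Yi}^{(d)}$ for $i\ge2$ and exploit the fact that, by Lemmas~\ref{lem4} and~\ref{lem5}, the first sample singular vectors are localized near $e_1^{(d)}$; orthogonality of $\hat{\eta}_{Xi}^{(d)}$ and $\hat{\eta}_{Yi}^{(d)}$ to $\hat{\eta}_{X1}^{(d)}$ and $\hat{\eta}_{Y1}^{(d)}$ then forces their first coordinates to be small, so the bilinear form essentially sees only the $(d-1)\times(d-1)$ bottom-right submatrix of $\hat{\mathbf{\Sigma}}_{XY}^{(d)}$, whose entries are $O_p(1)$ and whose Frobenius norm is $O_p(d)$. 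Note that a more direct attack via $n\hat{\mathbf{\Sigma}}_{XY}^{(d)}=\mathbf{X}^{(d)}(\mathbf{Y}^{(d)})^{T}$ and Weyl's inequalities would only give $O_p(d^{(\alpha+1)/2})$, since the spike--bulk cross terms have operator norm of that order; it is the smallness of the first coordinates of $\hat{\eta}_{Xi}^{(d)}$ and $\hat{\eta}_{Yi}^{(d)}$ that brings the bound down to $O_p(d)$.

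\emph{Steps.} First, aggregating the entry-wise bound $|\hat{\eta}_{X1}^{(d)}(j)|=O_p(1/\sqrt{d^{\alpha}})$, $j\ge2$, of Lemma~\ref{lem5} gives
\[
\|\hat{\eta}_{X1}^{(d)}-e_1^{(d)}\|_2^2 = \bigl(\hat{\eta}_{X1}^{(d)}(1)-1\bigr)^2 + \sum_{j\ge2}\bigl(\hat{\eta}_{X1}^{(d)}(j)\bigr)^2 = O_p(d^{1-\alpha}),
\]
and similarly for $\hat{\eta}_{Y1}^{(d)}$. For $i\ge2$, the orthogonality $\langle\hat{\eta}_{Xi}^{(d)},\hat{\eta}_{X1}^{(d)}\rangle=0$ together with $\|\hat{\eta}_{Xi}^{(d)}\|_2=1$ implies
\[
|\hat{\eta}_{Xi}^{(d)}(1)|=\bigl|\langle\hat{\eta}_{Xi}^{(d)},\,e_1^{(d)}-\hat{\eta}_{X1}^{(d)}\rangle\bigr|\le \|e_1^{(d)}-\hat{\eta}_{X1}^{(d)}\|_2 = O_p(d^{(1-\alpha)/2}),
\]
and likewise for $\hat{\eta}_{Yi}^{(d)}(1)$. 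Decomposing $\hat{\eta}_{Xi}^{(d)}=a_ie_1^{(d)}+\tilde A_i$ and $\hat{\eta}_{Yi}^{(d)}=b_ie_1^{(d)}+\tilde B_i$ with $\tilde A_i,\tilde B_i$ orthogonal to $e_1^{(d)}$ and of norm at most $1$, the bilinear form expands as
\[
\hat{\lambda}_{XYi}^{(d)} = a_ib_i\,\hat{\mathbf{\Sigma}}_{XY}^{(d)}(1,1)\;+\;a_i\,\vec R_1^{\,T}\tilde B_i\;+\;b_i\,\vec C_1^{\,T}\tilde A_i\;+\;\tilde A_i^{\,T}S_{22}\tilde B_i,
\]
where $\vec R_1$ and $\vec C_1$ are the first row and first column of $\hat{\mathbf{\Sigma}}_{XY}^{(d)}$ with the $(1,1)$ entry deleted (both $(d-1)$-vectors), and $S_{22}$ is the bottom-right $(d-1)\times(d-1)$ submatrix.

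\emph{Piecewise bounds and main obstacle.} Reading off the entry orders from~(\ref{sec3:equ19}): $|\hat{\mathbf{\Sigma}}_{XY}^{(d)}(1,1)|=O_p(d^{\alpha})$; each entry of $\vec R_1$ or $\vec C_1$ is $O_p(\sqrt{d^{\alpha}})$ (coming from $\tau_Y\langle c_1,z_{Yj\bullet}\rangle/n$ with $\|c_1\|_2=O_p(\sqrt{d^{\alpha}})$), so $\|\vec R_1\|_2,\|\vec C_1\|_2=O_p(\sqrt{d^{\alpha+1}})$; and every entry of $S_{22}$ is $O_p(1)$, so with $(d-1)^2$ such entries $\|S_{22}\|_F^2=O_p(d^2)$ and $\|S_{22}\|_{\mathrm{op}}\le\|S_{22}\|_F=O_p(d)$. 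Plugging in the orthogonality bounds on $a_i,b_i$: the $(1,1)$-term is $O_p(d^{1-\alpha}\cdot d^{\alpha})=O_p(d)$; each cross term is at most $|a_i|\,\|\vec R_1\|_2\,\|\tilde B_i\|_2=O_p(d^{(1-\alpha)/2}\cdot d^{(\alpha+1)/2})=O_p(d)$; and $|\tilde A_i^{\,T}S_{22}\tilde B_i|\le\|S_{22}\|_{\mathrm{op}}=O_p(d)$. Summing the four pieces gives $\hat{\lambda}_{XYi}^{(d)}=O_p(d)$. The delicate point is the very first step: Lemma~\ref{lem4} by itself gives only the qualitative limit $\hat{\eta}_{X1}^{(d)}\to e_1^{(d)}$, whereas one needs the rate $O_p(d^{(1-\alpha)/2})$ in order to tame the $O_p(d^{\alpha})$ factor in the $(1,1)$-term. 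This rate is obtainable either by interpreting Lemma~\ref{lem5} uniformly in $j$ or by quantitatively re-reading the Frobenius bound in the proof of Lemma~\ref{lem2} (which already yields $\|n\hat{\mathbf{\Sigma}}_{XY}^{(d)}/d^{\alpha}-\mathbf{M}_{XY}^{(d)}\|_F^2=O_p(d^{1-\alpha})$ and then $\|\tilde\eta_{X1}^{(d)}\|_2^2=O_p(d^{1-\alpha})$ through Lemma~\ref{lem3}); the rest of the argument is just Cauchy--Schwarz and the bulk Frobenius estimate for $S_{22}$.
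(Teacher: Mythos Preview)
Your proposal is correct, and it takes a genuinely different route from the paper's argument.

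The paper proceeds by truncation: it lets $\tilde{\mathbf{\Sigma}}^{(d)}_{XY}$ and $\tilde\eta_{Xi}^{(d)},\tilde\eta_{Yi}^{(d)}$ denote the objects with first row/column (resp.\ first coordinate) zeroed out, writes
\[
\Bigl\|\sum_{i\ge2}\tfrac{\hat{\lambda}_{XYi}^{(d)}}{d}\,\tilde\eta_{Xi}^{(d)}(\tilde\eta_{Yi}^{(d)})^{T}\Bigr\|_F
\;\le\;\Bigl\|\tfrac{\tilde{\mathbf{\Sigma}}^{(d)}_{XY}}{d}\Bigr\|_F
+\Bigl\|\tfrac{\hat{\lambda}_{XY1}^{(d)}}{d}\,\tilde\eta_{X1}^{(d)}(\tilde\eta_{Y1}^{(d)})^{T}\Bigr\|_F,
\]
shows both right-hand terms are $O_p(1)$, and then expands the left-hand Frobenius norm squared to extract $\sum_{i\ge2}(\hat{\lambda}_{XYi}^{(d)}/d)^2$ after verifying $\|\tilde\eta_{Xi}^{(d)}\|_2\to1$ and the cross terms $\langle\tilde\eta_{Xi}^{(d)},\tilde\eta_{Xi'}^{(d)}\rangle\to0$ for $i\neq i'\ge2$. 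Your approach instead attacks each $\hat{\lambda}_{XYi}^{(d)}$ directly as a bilinear form, splits along the $e_1^{(d)}$ direction, and bounds the four resulting pieces with Cauchy--Schwarz. This is shorter and more elementary: you never need to study $\tilde\eta_{Xi}^{(d)}$ for $i\ge2$ at all, whereas the paper's expansion requires their asymptotic orthonormality. Conversely, the paper's Frobenius route bounds all $i\ge2$ simultaneously and feeds naturally into the sharper Lemma~\ref{lem15}.

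Both arguments hinge on the same quantitative input, namely $\|\tilde\eta_{X1}^{(d)}\|_2^2=O_p(d^{1-\alpha})$. The paper invokes this implicitly when it writes $\sum_{j\ge2}(\sqrt{d^\alpha}\hat\eta_{X1}^{(d)}(j))^2/d=O_p(1)$; you obtain it either by reading Lemma~\ref{lem5} uniformly in $j$ or, as you note, by tracking the rate in the proof of Lemma~\ref{lem2}. Your claim that the latter gives $\|n\hat{\mathbf{\Sigma}}_{XY}^{(d)}/d^\alpha-\mathbf{M}_{XY}^{(d)}\|_F^2=O_p(d^{1-\alpha})$ is correct: since $\alpha>1$, the double sum $\sum_{i,j}\langle z_{Xi\bullet},z_{Yj\bullet}\rangle^2/d^{2\alpha}=O_p(d^{2(1-\alpha)})$ is dominated by the single sums $\sum_j\langle c_1,z_{Yj\bullet}\rangle^2/d^{2\alpha}=O_p(d^{1-\alpha})$, and restricting the resulting rank-one Frobenius bound in Lemma~\ref{lem3} to the first row (resp.\ column) then yields $\|\tilde\eta_{Y1}^{(d)}\|_2^2=O_p(d^{1-\alpha})$ (resp.\ for $X$).
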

\begin{proof}
Let $\tilde{\mathbf{\Sigma}}^{(d)}_{XY}$ be the $(d-1) \times (d-1)$ sample cross-covariance matrix $\hat{\mathbf{\Sigma}}^{(d)}_{XY}$ from which the first row and column are set to 0. Let $\tilde{\eta}_{Xi}^{(d)}$ and $\tilde{\eta}_{Yi}^{(d)}$ be $\hat{\eta}_{Xi}^{(d)}$ and $\hat{\eta}_{Yi}^{(d)}$ where their first entries are set to 0. Then,
\begin{align*}
&\frac{n\hat{\lambda}_{XY1}^{(d)}}{d} \tilde{\eta}_{X1}^{(d)} \left( \tilde{\eta}_{Y1}^{(d)} \right)^{T}+ \sum_{i=2}^{n} \frac{n\hat{\lambda}_{XYi}^{(d)}}{d} \tilde{\eta}_{Xi}^{(d)} \left( \tilde{\eta}_{Yi}^{(d)} \right)^{T}= \frac{n\tilde{\mathbf{\Sigma}}^{(d)}_{XY}}{d},\\
&\sum_{i=2}^{n} \frac{n\hat{\lambda}_{XYi}^{(d)}}{d} \tilde{\eta}_{Xi}^{(d)} \left( \tilde{\eta}_{Yi}^{(d)}\right)^{T} = \frac{n\tilde{\mathbf{\Sigma}}^{(d)}_{XY}}{d} - \frac{\hat{n\lambda}_{XY1}^{(d)}}{d} \tilde{\eta}_{X1}^{(d)} \left( \tilde{\eta}_{Y1}^{(d)}\right)^{T},\\
&\left\| \sum_{i=2}^{n} \frac{n\hat{\lambda}_{XYi}^{(d)}}{d} \tilde{\eta}_{Xi}^{(d)} \left( \tilde{\eta}_{Yi}^{(d)} \right)^{T}\right\|_{F} \le \left\| \frac{n\tilde{\mathbf{\Sigma}}^{(d)}_{XY}}{d} \right\|_{F} + \left\| \frac{n\hat{\lambda}_{XY1}^{(d)}}{d} \tilde{\eta}_{X1}^{(d)} \left( \tilde{\eta}_{Y1}^{(d)} \right)^{T}\right\|_{F}.
\end{align*}
Note that $\| c_{2} \|_{2}^{2}$ and $\| d_{2} \|_{2}^{2}$ are $O_{P}(1)$ and that $\| z_{Yi\bullet} \|_{2}^{2} \sim \chi_{n}^{2}$ and $\| z_{Xi\bullet} \|_{2}^{2} \sim \chi_{n}^{2}$. Using the Cauchy-Schwarz inequality,
\begin{align*}
\left\| \frac{n\tilde{\mathbf{\Sigma}}^{(d)}_{XY}}{d} \right\|_{F}^{2} 
&=\frac{\langle c_{2}, d_{2} \rangle^{2}}{d^{2}}
+\tau_{Y}^{2}\sum_{i=3}^{d} \frac{\langle c_{2}, z_{Yi\bullet} \rangle^{2}}{d^{2}}\\
&\quad +\tau_{X}^{2}\sum_{i=3}^{d} \frac{\langle z_{Xi\bullet}, d_{2} \rangle^{2}}{d^{2}}
+\tau_{X}^{2}\tau_{Y}^{2}\sum_{i=3}^{d}\sum_{j=3}^{d} \frac{\langle z_{Xi\bullet}, z_{Yj\bullet} \rangle^{2}}{d^{2}}\\
& \le \frac{\langle c_{2}, d_{2} \rangle^{2}}{d^{2}}
+\tau_{Y}^{2}\sum_{i=3}^{d} \frac{\| c_{2} \|_{2}^{2} \| z_{Yi\bullet} \|_{2}^{2}}{d^{2}}\\
&\quad +\tau_{X}^{2}\sum_{i=3}^{d} \frac{\| z_{Xi\bullet}\|_{2}^{2} \|d_{2} \|_{2}^{2}}{d^{2}}
+\tau_{X}^{2}\tau_{Y}^{2}\sum_{i=3}^{d}\sum_{j=3}^{d} \frac{\| z_{Xi\bullet}\|_{2}^{2} \| z_{Yj\bullet} \|_{2}^{2}}{d^{2}}\\
&=\frac{\langle c_{2}, d_{2} \rangle^{2}}{d^{2}}
+\tau_{Y}^{2} \left \| \frac{c_{2}}{\sqrt{d}} \right \|_{2}^{2} \sum_{i=3}^{d} \frac{\| z_{Yi\bullet} \|_{2}^{2}}{d}\\
&\quad +\tau_{Y}^{2} \left \| \frac{d_{2}}{\sqrt{d}} \right \|_{2}^{2} \sum_{i=3}^{d} \frac{\| z_{Yi\bullet} \|_{2}^{2}}{d}
+\tau_{X}^{2}\tau_{Y}^{2} \sum_{i=3}^{d} \frac{\| z_{Xi\bullet} \|_{2}^{2}}{d} \sum_{j=3}^{d} \frac{\| z_{Yj\bullet} \|_{2}^{2}}{d}\\
&\mathop{\longrightarrow}_{d \to \infty}^{p} 0+0+0+\tau_{X}^{2}\tau_{Y}^{2}E^{2}(\chi_{n}^{2})\\
&=\tau_{X}^{2}\tau_{Y}^{2}n^{2}.
\end{align*}
Using Lemma~\ref{lem4} and~\ref{lem5} on the magnitudes of $\hat{\lambda}_{XY1}^{(d)}$ and the entries of $\hat{\eta}_{X1}^{(d)}(i)$ and $\hat{\eta}_{Y1}^{(d)}(i)$,
\begin{align*}
\hat{\lambda}_{XY1}^{(d)} \asymp O_{p}(d^{\alpha}), \,\, \hat{\eta}_{X1}^{(d)}(i) = O_{p}(\frac{1}{\sqrt{d^{\alpha}}}), i=2,3,..,d.
\end{align*}
Then we have,
\begin{align*}
\left\| \frac{\hat{\lambda}_{XY1}^{(d)}}{d} \tilde{\eta}_{X1}^{(d)} \left(\tilde{\eta}_{Y1}^{(d)} \right)^{T}\right\|_{F}^{2} &= \left( \frac{\hat{\lambda}_{XY1}^{(d)}}{d} \right)^{2} \sum_{i=2}^{d} \sum_{j=2}^{d} \left( \hat{\eta}_{X1}^{(d)}(i) \right)^{2} \left( \hat{\eta}_{X1}^{(d)}(j) \right)^{2}\\
&= \left( \hat{\lambda}_{XY1}^{(d)} \right)^{2} \sum_{i=2}^{d} \frac{\left( \hat{\eta}_{X1}^{(d)}(i) \right)^{2}}{d} \sum_{j=2}^{d} \frac{\left( \hat{\eta}_{X1}^{(d)}(j) \right)^{2}}{d}\\
&= \left( \frac{\hat{\lambda}_{XY1}^{(d)}}{d^{\alpha}} \right)^{2} \sum_{i=2}^{d} \frac{\left( \sqrt{d^{\alpha}}\hat{\eta}_{X1}^{(d)}(i) \right)^{2}}{d} \sum_{j=2}^{d} \frac{\left( \sqrt{d^{\alpha}}\hat{\eta}_{X1}^{(d)}(j) \right)^{2}}{d}\\
&= O_{p}(1).
\end{align*}
Hence
\begin{align}
\label{sec3:equ37}
\left\| \sum_{i=2}^{n} \frac{\hat{\lambda}_{XYi}^{(d)}}{d} \tilde{\eta}_{Xi}^{(d)} \left(\tilde{\eta}_{Y1}^{(d)} \right)^{T} \right\|_{F}^{2} = O_{p}(1).
\end{align}
Expanding the squared Frobenius norm of the above matrix,
\begin{align*}
\left\| \sum_{i=2}^{n} \frac{\hat{\lambda}_{XYi}^{(d)}}{d} \tilde{\eta}_{Xi}^{(d)} \left(\tilde{\eta}_{Y1}^{(d)} \right)^{T} \right\|_{F}^{2} &= \sum_{j=2}^{d} \sum_{k=2}^{d} \left( \sum_{i=2}^{n} \frac{\hat{\lambda}_{XYi}^{(d)}}{d} \hat{\eta}_{Xi}^{(d)}(j) \hat{\eta}_{Yi}^{(d)}(k) \right)^{2}\\
&= \sum_{i=2}^{n} \left( \frac{\hat{\lambda}_{XYi}^{(d)}}{d} \right)^{2} \sum_{j=2}^{d} \sum_{k=2}^{d} \left( \hat{\eta}_{Xi}^{(d)}(j) \right)^{2} \left( \hat{\eta}_{Yi}^{(d)}(k) \right)^{2}\\
&\quad + 2\sum_{i \not= i'} \frac{\hat{\lambda}_{XYi}^{(d)}}{d} \frac{\hat{\lambda}_{XYi'}^{(d)}}{d} \sum_{j=2}^{d} \sum_{k=2}^{d} \left( \hat{\eta}_{Xi}^{(d)}(j) \hat{\eta}_{Xi'}^{(d)}(j)\right) \left( \hat{\eta}_{Yi}^{(d)}(k) \hat{\eta}_{Yi'}^{(d)}(k) \right)\\
&= \sum_{i=2}^{n} \left( \frac{\hat{\lambda}_{XYi}^{(d)}}{d} \right)^{2} \sum_{j=2}^{d} \left( \hat{\eta}_{Xi}^{(d)}(j) \right)^{2} \sum_{k=2}^{d} \left( \hat{\eta}_{Yi}^{(d)}(k) \right)^{2}\\
&\quad + 2\sum_{i \not= i'} \frac{\hat{\lambda}_{XYi}^{(d)}}{d} \frac{\hat{\lambda}_{XYi'}^{(d)}}{d} \sum_{j=2}^{d} \left( \hat{\eta}_{Xi}^{(d)}(j) \hat{\eta}_{Xi'}^{(d)}(j) \right) \sum_{k=2}^{d} \left( \hat{\eta}_{Yi}^{(d)}(k) \hat{\eta}_{Yi'}^{(d)}(k) \right)\\
&= \sum_{i=2}^{n} \left( \frac{\hat{\lambda}_{XYi}^{(d)}}{d} \right)^{2} \left\| \tilde{\eta}_{Xi}^{(d)} \right\|_{2}^{2} \left\| \tilde{\eta}_{Yi}^{(d)} \right\|_{2}^{2}\\
&\quad + 2\sum_{2 \le i,i' \le n, \, i \not= i'} \frac{\hat{\lambda}_{XYi}^{(d)}}{d} \frac{\hat{\lambda}_{XYi'}^{(d)}}{d} \left\langle \tilde{\eta}_{Xi}^{(d)}, \tilde{\eta}_{Xi'}^{(d)} \right\rangle \left\langle \tilde{\eta}_{Yi}^{(d)}, \hat{\eta}_{Yi'}^{(d)} \right\rangle.
\end{align*}
We, to simplify the last part in the above equalities, show that $\| \tilde{\eta}_{Xi}^{(d)} \|_{2}^{2}$ and $\| \tilde{\eta}_{Yi}^{(d)} \|_{2}^{2}$, for $i=2,3,\dots,n$, approach to 1 and that $\langle \tilde{\eta}_{Xi}^{(d)}, \tilde{\eta}_{Xi'}^{(d)} \rangle$ and $\langle \tilde{\eta}_{Yi}^{(d)}, \tilde{\eta}_{Yi'}^{(d)} \rangle$, for $i,i'=2,3,\dots,n$ and $i\not=i'$, approach to 0 as $d \to \infty$. Consider $\| \tilde{\eta}_{X2}^{(d)} \|_{2}^{2}$. Since $\|\hat{\eta}_{X1}^{(d)}-e_{1}^{(d)}\|_{2}$ converges in probability to 0 by Lemma~\ref{lem4} and $\| \hat{\eta}_{X1}^{(d)} \|_{2}^{2} = 1$,
\begin{align*}
\left\| \tilde{\eta}_{X1}^{(d)} \right\|_{2}^{2} = 1 - \left( \hat{\eta}_{X1}^{(d)}(1) \right)^{2} \mathop{\longrightarrow}_{d \to \infty}^{p} 0,
\end{align*}
which leads to,
\begin{align*}
\left\langle \tilde{\eta}_{X1}^{(d)}, \tilde{\eta}_{X2}^{(d)} \right\rangle^{2} \le \left\| \tilde{\eta}_{X1}^{(d)} \right\|_{2}^{2} \left\| \tilde{\eta}_{X2}^{(d)} \right\|_{2}^{2} \le \left\| \tilde{\eta}_{X1}^{(d)} \right\|_{2}^{2} \times 1 \mathop{\longrightarrow}_{d \to \infty}^{p} 0.
\end{align*}
Using the orthogonality of $\hat{\eta}_{X1}^{(d)}$ and $\hat{\eta}_{X2}^{(d)}$,
\begin{align*}
\left\langle \hat{\eta}_{X1}^{(d)}, \hat{\eta}_{X2}^{(d)} \right\rangle = \hat{\eta}_{X1}^{(d)}(1) \hat{\eta}_{X2}^{(d)}(1) + \left\langle \tilde{\eta}_{X1}^{(d)}, \tilde{\eta}_{X2}^{(d)} \right\rangle = 0.
\end{align*}
Since $\hat{\eta}_{X1}^{(d)}(1)$ converges in probability to 1 by Lemma~\ref{lem4}, we have,
\begin{align*}
\hat{\eta}_{X2}^{(d)}(1) \mathop{\longrightarrow}_{d \to \infty}^{p} 0.
\end{align*}
Since $\| \hat{\eta}_{X2}^{(d)} \|_{2}^{2} = 1$,
\begin{align*}
\left\| \tilde{\eta}_{X2}^{(d)} \right\|_{2}^{2} \mathop{\longrightarrow}_{d \to \infty}^{p} 1.
\end{align*}
Simiarly,
\begin{align*}
\left\| \tilde{\eta}_{Xi}^{(d)} \right\|_{2}^{2} \mathop{\longrightarrow}_{d \to \infty}^{p} 1, \,\, \left\| \tilde{\eta}_{Yi}^{(d)} \right\|_{2}^{2}\mathop{\longrightarrow}_{d \to \infty}^{p} 1, \,\, i=2,3,\dots,n.
\end{align*}
Now consider $\langle \tilde{\eta}_{X2}^{(d)}, \tilde{\eta}_{X3}^{(d)} \rangle$. Since $\hat{\eta}_{X2}^{(d)}$ and $\hat{\eta}_{X3}^{(d)}$ are orthogonal,
\begin{equation*}
\left\langle \hat{\eta}_{X2}^{(d)}, \hat{\eta}_{X3}^{(d)} \right\rangle = \hat{\eta}_{X2}^{(d)}(1) \hat{\eta}_{X3}^{(d)}(1) + \left\langle \tilde{\eta}_{X2}^{(d)}, \tilde{\eta}_{X3}^{(d)} \right\rangle = 0.
\end{equation*}
We showed that $\| \tilde{\eta}_{X2}^{(d)} \|_{2}^{2}$ and $\| \tilde{\eta}_{X3}^{(d)} \|_{2}^{2}$ converge in probability to 0, which implies,
\begin{equation*}
\left\langle \tilde{\eta}_{X2}^{(d)}, \tilde{\eta}_{X3}^{(d)} \right\rangle \mathop{\longrightarrow}_{d \to \infty}^{p} 0.
\end{equation*}
Simiarly,
\begin{equation*}
\left\langle \tilde{\eta}_{Xi}^{(d)}, \tilde{\eta}_{Xi'}^{(d)} \right\rangle \mathop{\longrightarrow}_{d \to \infty}^{p} 0, \left\langle \tilde{\eta}_{Yi}^{(d)}, \,\, \tilde{\eta}_{Yi'}^{(d)} \right\rangle \mathop{\longrightarrow}_{d \to \infty}^{p} 0, \,\, i,i'=2,3,\dots, n, \,\, i\not=i'.
\end{equation*}
Then, by~(\ref{sec3:equ37}),
\begin{align*}
&\left\| \sum_{i=2}^{n} \frac{\hat{\lambda}_{XYi}^{(d)}}{d} \tilde{\eta}_{Xi}^{(d)} \left(\tilde{\eta}_{Y1}^{(d)} \right)^{T} \right\|_{F}^{2}\\
&\quad \quad =\sum_{i=2}^{n} \left( \frac{\hat{\lambda}_{XYi}^{(d)}}{d} \right)^{2} \left\| \tilde{\eta}_{Xi}^{(d)} \right\|_{2}^{2} \left\| \tilde{\eta}_{Yi}^{(d)} \right\|_{2}^{2}
+ 2\sum_{2 \le i,i' \le n, \, i \not= i'} \frac{\hat{\lambda}_{XYi}^{(d)}}{d} \frac{\hat{\lambda}_{XYi'}^{(d)}}{d} \left\langle \tilde{\eta}_{Xi}^{(d)}, \tilde{\eta}_{Xi'}^{(d)} \right\rangle \left\langle \tilde{\eta}_{Yi}^{(d)}, \hat{\eta}_{Yi'}^{(d)} \right\rangle\\
&\quad \quad \mathop{\longrightarrow}_{d \to \infty}^{p} \sum_{i=2}^{n} \left( \frac{\hat{\lambda}_{XYi}^{(d)}}{d} \right)^{2} = O_{p}(1).
\end{align*}
Hence, the magnitudes of $\hat{\lambda}_{XYi}^{(d)}$, $i=2,3,..n$ are of $O_{p}(d)$ as $d \to \infty$.
\end{proof}
\begin{lem}[CCA HDLSS Asymptotic lemma 6.]
\label{lem15}
This lemma improves Lemma~\ref{lem6} by providing a precise limiting value. The sample singular values $\hat{\lambda}_{XYi}^{(d)}$ scaled by $1/d$, for $i=2,3,..,n$, converge in probability to the following quantity as $d \to \infty$,
\begin{align*}
\frac{\hat{\lambda}_{XYi}^{(d)}}{d} \mathop{\longrightarrow}_{d \to \infty}^{p} 0, \,\, i=2,3,\dots,n.
\end{align*}
\end{lem}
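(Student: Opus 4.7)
The plan is to bound $\sum_{i=2}^{n}\bigl(\hat{\lambda}_{XYi}^{(d)}/d\bigr)^{2}$ from above by a quantity that vanishes in probability, since by positivity this will force each individual ratio $\hat{\lambda}_{XYi}^{(d)}/d$ to vanish. The starting identity is the Pythagorean decomposition of the Frobenius norm that follows from the orthogonality of the SVD terms,
\begin{align*}
\sum_{i=2}^{n}\left(\frac{\hat{\lambda}_{XYi}^{(d)}}{d}\right)^{2}
=\frac{1}{d^{2}}\left\|\hat{\mathbf{\Sigma}}_{XY}^{(d)}-\hat{\lambda}_{XY1}^{(d)}\hat{\eta}_{X1}^{(d)}\bigl(\hat{\eta}_{Y1}^{(d)}\bigr)^{T}\right\|_{F}^{2},
\end{align*}
so the task reduces to showing that the Frobenius-norm-squared of the residual after removing the leading rank-$1$ spike piece is $o_{P}(d^{2})$.

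To analyse this residual I would expand $\hat{\mathbf{\Sigma}}_{XY}^{(d)}$ entry-wise via~(\ref{sec3:equ19}) and subtract the matching piece of $\hat{\lambda}_{XY1}^{(d)}\hat{\eta}_{X1}^{(d)}(\hat{\eta}_{Y1}^{(d)})^{T}$ cell by cell. Lemma~\ref{lem4} gives $\hat{\eta}_{X1}^{(d)}(1),\hat{\eta}_{Y1}^{(d)}(1)\to 1$ and $n\hat{\lambda}_{XY1}^{(d)}/d^{\alpha}\to C_{XY}$, while Lemma~\ref{lem5} gives $\hat{\eta}_{X1}^{(d)}(i),\hat{\eta}_{Y1}^{(d)}(i)=O_{P}(1/\sqrt{d^{\alpha}})$ for $i\ge 2$. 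Combining these, the $(1,1)$ entry of $\hat{\mathbf{\Sigma}}_{XY}^{(d)}$, which carries the dominant $O_{P}(d^{\alpha})$ mass, is absorbed by $\hat{\lambda}_{XY1}^{(d)}\hat{\eta}_{X1}^{(d)}(1)\hat{\eta}_{Y1}^{(d)}(1)$, and the off-diagonal spike--noise entries in the first row and column (each $O_{P}(\sqrt{d^{\alpha}/n})$) are absorbed by $\hat{\lambda}_{XY1}^{(d)}\hat{\eta}_{X1}^{(d)}(1)\hat{\eta}_{Y1}^{(d)}(j)$ and its transpose up to strictly smaller order. What remains is the bulk block with entries $(\tau_{X}\tau_{Y}/n)\langle z_{Xi\bullet},z_{Yj\bullet}\rangle$ for $i,j\ge 3$, plus lower-order cross terms, each matched against a tiny contribution $\hat{\lambda}_{XY1}^{(d)}\hat{\eta}_{X1}^{(d)}(i)\hat{\eta}_{Y1}^{(d)}(j)=O_{P}(1/n)$ from the rank-$1$ approximation.

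The main obstacle is showing the bulk block contributes only $o_{P}(d^{2})$ to the residual Frobenius norm. A naive bound gives $O_{P}(d^{2}/n)$ because each of the $(d-2)^{2}$ bulk entries has magnitude $O_{P}(1/\sqrt{n})$ and their squares enter additively, so the subtraction of the rank-$1$ approximation has to cancel the leading Gram-matrix structure of the bulk, not merely its entry-wise mass. Making this cancellation precise requires tracking the second-order correction to $\hat{\lambda}_{XY1}^{(d)}\hat{\eta}_{X1}^{(d)}(\hat{\eta}_{Y1}^{(d)})^{T}$ beyond the leading $C_{XY}d^{\alpha}/n$ given by Lemma~\ref{lem4}, and showing that this correction aligns with the top eigen-directions of the bulk Gram matrix $(\tau_{X}\tau_{Y}/n)\mathbf{Z}_{X}\mathbf{Z}_{Y}^{T}$ (the dominant part of $\tilde{\mathbf{\Sigma}}_{XY}^{(d)}$) tightly enough to drop the final bound from $O_{P}(d^{2}/n)$ down to $o_{P}(d^{2})$. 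This delicate interplay between the spike-induced rank-$1$ approximation and the bulk Wishart-type spectrum is the technically hardest step in the proof.
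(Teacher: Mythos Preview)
Your Pythagorean identity is valid, but it is only a reformulation of the claim: showing $\sum_{i\ge 2}(\hat{\lambda}_{XYi}^{(d)}/d)^{2}\to 0$ is literally the same as showing the residual Frobenius norm is $o_{P}(d^{2})$. The entry-wise programme you sketch does handle the first row and column, but in the bulk block it becomes circular. The bulk of your residual is $\sum_{i\ge 2}\hat{\lambda}_{XYi}^{(d)}\tilde{\eta}_{Xi}^{(d)}(\tilde{\eta}_{Yi}^{(d)})^{T}$, whose squared Frobenius norm (after dividing by $d^{2}$) tends to exactly $\sum_{i\ge 2}(\hat{\lambda}_{XYi}^{(d)}/d)^{2}$---the quantity you set out to bound. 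Your proposed escape (track second-order corrections, exploit alignment with the bulk Gram eigen-directions) is not carried out, and it is not clear it can be: the rank-one bulk piece $\hat{\lambda}_{XY1}^{(d)}\tilde{\eta}_{X1}^{(d)}(\tilde{\eta}_{Y1}^{(d)})^{T}$ has directions dictated by the spike rows $c_{1},d_{1}$ (see the expansion of $\sqrt{d^{\alpha}}\hat{\eta}_{X1}^{(d)}(j)$ in the paper), not by the eigen-directions of $\tau_{X}\tau_{Y}\mathbf{Z}_{X}\mathbf{Z}_{Y}^{T}/n$, so there is no a~priori alignment to invoke. You have correctly located the obstacle but not removed it.

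The paper breaks the circularity by a squeeze argument on the \emph{truncated} matrix $\tilde{\mathbf{\Sigma}}_{XY}^{(d)}$ (first row and column zeroed). On one side, Cauchy--Schwarz entry-wise plus the law of large numbers gives the upper bound $\|\tilde{\mathbf{\Sigma}}_{XY}^{(d)}/d\|_{F}^{2}\le \tau_{X}^{2}\tau_{Y}^{2}$ in the limit. On the other side, the paper expands this same norm along the (non-orthogonal, truncated) components $\hat{\lambda}_{XYi}^{(d)}\tilde{\eta}_{Xi}^{(d)}(\tilde{\eta}_{Yi}^{(d)})^{T}$ and shows that (i) all cross terms vanish, (ii) the $i\ge 2$ diagonal sum tends to $\sum_{i\ge 2}(\hat{\lambda}_{XYi}^{(d)}/d)^{2}$, and, crucially, (iii) the $i=1$ diagonal term $(\hat{\lambda}_{XY1}^{(d)}/d)^{2}\|\tilde{\eta}_{X1}^{(d)}\|_{2}^{2}\|\tilde{\eta}_{Y1}^{(d)}\|_{2}^{2}$ alone already tends to $\tau_{X}^{2}\tau_{Y}^{2}\|c_{1}\|_{2}^{2}\|d_{1}\|_{2}^{2}/\langle c_{1},d_{1}\rangle^{2}\ge \tau_{X}^{2}\tau_{Y}^{2}$. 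The upper and lower bounds coincide, forcing the nonnegative remainder $\sum_{i\ge 2}(\hat{\lambda}_{XYi}^{(d)}/d)^{2}$ to zero. The missing idea in your plan is this saturation of the Cauchy--Schwarz upper bound by the first singular component alone; once you have that, no entry-wise cancellation analysis in the bulk is needed.
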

\begin{proof}
Let $\tilde{\mathbf{\Sigma}}^{(d)}_{XY}$ be the $(d-1) \times (d-1)$ sample cross-covariance matrix $\hat{\mathbf{\Sigma}}^{(d)}_{XY}$ from which the first row and column are set to 0. We showed that,
\begin{align}
\label{sec3:equ36}
\left\| \frac{\tilde{\mathbf{\Sigma}}^{(d)}_{XY}}{d} \right\|_{F}^{2} \mathop{\longrightarrow}_{d \to \infty}^{p} \le \tau_{X}^{2}\tau_{Y}^{2}.
\end{align}
Let $\tilde{\eta}_{Xi}^{(d)}$ and $\tilde{\eta}_{Yi}^{(d)}$ be $\hat{\eta}_{Xi}^{(d)}$ and $\hat{\eta}_{Yi}^{(d)}$ where their first entries are set to 0. Then,
\begin{align*}
\left\| \frac{\tilde{\mathbf{\Sigma}}^{(d)}_{XY}}{d} \right\|_{F}^{2}
= &\sum_{j=2}^{d} \sum_{k=2}^{d} \left( \sum_{i=1}^{n} \frac{\hat{\lambda}_{XYi}^{(d)}}{d} \hat{\eta}_{Xi}^{(d)}(j) \hat{\eta}_{Yi}^{(d)}(k) \right)^{2}\\
= &\sum_{i=1}^{n} \left( \frac{\hat{\lambda}_{XYi}^{(d)}}{d} \right)^{2} \sum_{j=2}^{d} \sum_{k=2}^{d} \left( \hat{\eta}_{Xi}^{(d)}(j) \right)^{2} \left( \hat{\eta}_{Yi}^{(d)}(k) \right)^{2}\\
&+ 2\sum_{1 \le i,i' \le n, \, i \not= i'} \frac{\hat{\lambda}_{XYi}^{(d)}}{d} \frac{\hat{\lambda}_{XYi'}^{(d)}}{d} \sum_{j=2}^{d} \sum_{k=2}^{d} \left( \hat{\eta}_{Xi}^{(d)}(j) \hat{\eta}_{Xi'}^{(d)}(j)\right) \left( \hat{\eta}_{Yi}^{(d)}(k) \hat{\eta}_{Yi'}^{(d)}(k) \right)\\
= &\sum_{i=1}^{n} \left( \frac{\hat{\lambda}_{XYi}^{(d)}}{d} \right)^{2} \sum_{j=2}^{d} \left( \hat{\eta}_{Xi}^{(d)}(j) \right)^{2} \sum_{k=2}^{d} \left( \hat{\eta}_{Yi}^{(d)}(k) \right)^{2}\\
&+ 2\sum_{1 \le i,i' \le n, \, i \not= i'} \frac{\hat{\lambda}_{XYi}^{(d)}}{d} \frac{\hat{\lambda}_{XYi'}^{(d)}}{d} \sum_{j=2}^{d} \left( \hat{\eta}_{Xi}^{(d)}(j) \hat{\eta}_{Xi'}^{(d)}(j) \right) \sum_{k=2}^{d} \left( \hat{\eta}_{Yi}^{(d)}(k) \hat{\eta}_{Yi'}^{(d)}(k) \right)\\
= &\underset{(1)}{\underline{\left( \frac{\hat{\lambda}_{XY1}^{(d)}}{d} \right)^{2} \sum_{j=2}^{d} \left( \tilde{\eta}_{X1}^{(d)}(j) \right)^{2} \sum_{k=2}^{d} \left( \tilde{\eta}_{Y1}^{(d)}(k) \right)^{2}}}\\
&+ \underset{(2)}{\underline {2\sum_{i=2}^{n} \frac{\hat{\lambda}_{XY1}^{(d)}}{d} \frac{\hat{\lambda}_{XYi}^{(d)}}{d} \left\langle \tilde{\eta}_{X1}^{(d)}, \tilde{\eta}_{Xi}^{(d)} \right\rangle \left\langle \tilde{\eta}_{Y1}^{(d)}, \tilde{\eta}_{Yi}^{(d)} \right\rangle}}
+\underset{(3)}{\underline{\sum_{i=2}^{n} \left( \frac{\hat{\lambda}_{XYi}^{(d)}}{d} \right)^{2} \left\| \tilde{\eta}_{Xi}^{(d)} \right\|_{2}^{2} \left\| \tilde{\eta}_{Yi}^{(d)} \right\|_{2}^{2}}}\\
&+ \underset{(4)}{\underline{2\sum_{2 \le i,i' \le n, \, i \not= i'} \frac{\hat{\lambda}_{XYi}^{(d)}}{d} \frac{\hat{\lambda}_{XYi'}^{(d)}}{d} \left\langle \tilde{\eta}_{Xi}^{(d)}, \tilde{\eta}_{Xi'}^{(d)} \right\rangle \left\langle \tilde{\eta}_{Yi}^{(d)}, \tilde{\eta}_{Yi'}^{(d)} \right\rangle}}.
\end{align*}
In the proof of Lemma~\ref{lem6}, we showed that $\| \tilde{\eta}_{Xi}^{(d)} \|_{2}^{2}$ and $\| \tilde{\eta}_{Yi}^{(d)}\|_{2}^{2}$, for $i=2,3,\dots,n$, converge in probability to 1 and that $\langle \tilde{\eta}_{Xi}^{(d)}, \tilde{\eta}_{Xi'}^{(d)} \rangle$ and $\langle \tilde{\eta}_{Yi}^{(d)}, \tilde{\eta}_{Xi'}^{(d)} \rangle$, for $i,i'=2,3,\dots,n$ and $i \not= i'$, converge in probability to 1 as $d \to \infty$. Since $\hat{\lambda}_{XYi}$, for $i=2,3,\dots,n$, is of magnitude of $O_{p}(d)$ by Lemma~\ref{lem6}, we have for (3) and (4),
\begin{align*}
\left| \sum_{i=2}^{n} \left( \frac{\hat{\lambda}_{XYi}^{(d)}}{d} \right)^{2} \left\| \tilde{\eta}_{Xi}^{(d)} \right\|_{2}^{2} \left\| \tilde{\eta}_{Yi}^{(d)} \right\|_{2}^{2}-\sum_{i=2}^{n} \left( \frac{\hat{\lambda}_{XYi}^{(d)}}{d} \right)^{2} \right| &\mathop{\longrightarrow}_{d \to \infty}^{p} 0,\\[5pt]
2\sum_{2 \le i,i' \le n, \, i \not= i'} \frac{\hat{\lambda}_{XYi}^{(d)}}{d} \frac{\hat{\lambda}_{XYi'}^{(d)}}{d} \left\langle \tilde{\eta}_{Xi}^{(d)}, \tilde{\eta}_{Xi'}^{(d)} \right\rangle \left\langle \tilde{\eta}_{Yi}^{(d)}, \tilde{\eta}_{Yi'}^{(d)} \right\rangle &\mathop{\longrightarrow}_{d \to \infty}^{p} 0.
\end{align*}
Since $\| \tilde{\eta}_{Xi}^{(d)} \|_{2}^{2}$ and $\| \tilde{\eta}_{Yi}^{(d)}\|_{2}^{2}$, for $i=2,3,\dots,n$, converge in probability to 1, the probability that an infinite number of entries of $\tilde{\eta}_{Xi}^{(d)}$ and $\tilde{\eta}_{Yi}^{(d)}$ are of ($\asymp 1/d^{a}$), for $0 \le a < 1/2$, is 0. Suppose that an infinite number of entries of $\tilde{\eta}_{Xi}^{(d)}$ are of ($\asymp 1/d^{a}$), for $0 \le a < 1/2$. Then, the squared sum of its entries blows up,
\begin{align*}
\sum_{j=2}^{d} \left(\tilde{\eta}_{Xi}^{(d)}(j)\right)^{2}=\frac{1}{d^{2a-1}} \sum_{j=2}^{d} \frac{d^{2a}\tilde{\eta}_{Xi}^{(d)}(j)}{d} \mathop{\longrightarrow}_{d \to \infty}^{p} \infty,\,\, 0 \le a < \frac{1}{2}.
\end{align*}
Hence, only a finite number of entries of $\tilde{\eta}_{Xi}^{(d)}$ and $\tilde{\eta}_{Yi}^{(d)}$ are of magnitude of ($\asymp 1/d^{a}$), for $0 \le a < 1/2$ and the rest of entries are of $O_{P}(1/\sqrt{d})$ as $d \to \infty$. Using this fact and Lemma~\ref{lem5},
\begin{align*}
\left\langle \tilde{\eta}_{X1}^{(d)}, \tilde{\eta}_{Xi}^{(d)} \right\rangle&=\sum_{j=2}^{d} \tilde{\eta}_{X1}^{(d)}(j) \tilde{\eta}_{Xi}^{(d)}(j)\\
&=\sum_{j \in I} \tilde{\eta}_{X1}^{(d)}(j) \tilde{\eta}_{Xi}^{(d)}(j) + \sum_{j \not\in I} \tilde{\eta}_{X1}^{(d)}(j) \tilde{\eta}_{Xi}^{(d)}(j)\\
&=\sum_{j \in I} \tilde{\eta}_{X1}^{(d)}(j) \tilde{\eta}_{Xi}^{(d)}(j) + \frac{1}{\sqrt{d^{\alpha-1}}}\sum_{j \not\in I} \frac{\left(\sqrt{d^{\alpha}}\tilde{\eta}_{X1}^{(d)}(j)\right) \left(\sqrt{d}\tilde{\eta}_{Xi}^{(d)}(j)\right)}{d}\\
&=O_{P}(1/\sqrt{d^{\alpha-1}}), \,\, i=2,3,\dots,n,
\end{align*}
where $I$ is an index set denoting the entries of $\tilde{\eta}_{Xi}^{(d)}$ of magnitude of ($\asymp 1/d^{a}$), for $0 \le a < 1/2$. Similarly, the magnitide of $\left\langle \tilde{\eta}_{Y1}^{(d)}, \tilde{\eta}_{Yi}^{(d)} \right\rangle$ is $O_{P}(1/\sqrt{d^{\alpha-1}})$, for $i=2,3,\dots,n$.
Hence, by Lemma~\ref{lem4} and~\ref{lem6},
\begin{align*}
(2)&=2\sum_{i=2}^{n} \frac{\hat{\lambda}_{XY1}^{(d)}}{d} \frac{\hat{\lambda}_{XYi}^{(d)}}{d} \left\langle \tilde{\eta}_{X1}^{(d)}, \tilde{\eta}_{Xi}^{(d)} \right\rangle \left\langle \tilde{\eta}_{Y1}^{(d)}, \tilde{\eta}_{Yi}^{(d)} \right\rangle\\
&=2\sum_{i=2}^{n} \frac{\hat{\lambda}_{XY1}^{(d)}}{d^{\alpha}} \frac{\hat{\lambda}_{XYi}^{(d)}}{d} \left(\sqrt{d^{\alpha-1}}\left\langle \tilde{\eta}_{X1}^{(d)}, \tilde{\eta}_{Xi}^{(d)} \right\rangle \right) \left(\sqrt{d^{\alpha-1}} \left\langle \tilde{\eta}_{Y1}^{(d)}, \tilde{\eta}_{Yi}^{(d)}\right\rangle \right)\\
&= O_{P}(1).
\end{align*}
We prove that the term (2) above indeed converges to 0 as $d \to \infty$. With Lemma~\ref{lem5} and the fact that $\langle \hat{\eta}_{X1}^{(d)}, \hat{\eta}_{Xi}^{(d)} \rangle=0$ and $\| \tilde{\eta}_{Xi}^{(d)}\|$ converges in probability to 0, for $i=2,3,\dots,n$, the Cauchy-Schwarz inequality implies,
\begin{align*}
\left( \hat{\eta}_{X1}^{(d)}(1)\hat{\eta}_{Xi}^{(d)}(1) \right)^{2}
&=\left( -\sum_{j=2}^{d}\hat{\eta}_{X1}^{(d)}(j)\hat{\eta}_{Xi}^{(d)}(j) \right)^{2}\\
&=\left\langle \tilde{\eta}_{X1}^{(d)},\tilde{\eta}_{Xi}^{(d)} \right\rangle^{2}\\
&\le \left\|\tilde{\eta}_{X1}^{(d)}\right\|_{2}^{2} \left\|\tilde{\eta}_{Xi}^{(d)}\right\|_{2}^{2}\\
&=\sum_{j=2}^{d}\left(\hat{\eta}_{X1}^{(d)}(j)\right)^{2} \left\|\tilde{\eta}_{Xi}^{(d)}\right\|_{2}^{2}\\
&=\frac{1}{d^{\alpha-1}} \sum_{j=2}^{d} \frac{\left(\sqrt{d^{\alpha}}\hat{\eta}_{X1}^{(d)}(j)\right)^{2}}{d} \left\|\tilde{\eta}_{Xi}^{(d)}\right\|_{2}^{2}\\
&= O_{P}(\frac{1}{d^{\alpha-1}}).
\end{align*}
Since $\hat{\eta}_{X1}^{(d)}(1)$ converges in probability to 1 by Lemma~\ref{lem4}, we have,
\begin{align}
\label{sec3:equ35}
\hat{\eta}_{Xi}^{(d)}(1) = O_{P}(\frac{1}{\sqrt{d^{\alpha-1}}}), \,\, i=2,3,\dots,n.
\end{align}
Similarly,
\begin{equation*}
\hat{\eta}_{Yi}^{(d)}(1) = O_{P}(\frac{1}{\sqrt{d^{\alpha-1}}}), \,\, i=2,3,\dots,n.
\end{equation*}
Consider $\hat{\mathbf{\Sigma}}^{(d)}_{XY}(3,1)$, an entry in the third row and first column of the sample cross-covariance matrix $\hat{\mathbf{\Sigma}}^{(d)}_{XY}$ given in~(\ref{sec3:equ19}). Then, we have,
\begin{align*}
\frac{\hat{\mathbf{\Sigma}}^{(d)}_{XY}(3,1)}{\sqrt{d^{\alpha}}}
&=\frac{\tau_{X}\langle z_{X3\bullet}, d_{1} \rangle}{n\sqrt{d^{\alpha}}}\\
&=\frac{1}{\sqrt{d^{\alpha}}}\sum_{i=1}^{n} \hat{\lambda}_{XYi}^{(d)} \hat{\eta}_{Xi}^{(d)}(3) \hat{\eta}_{Yi}^{(d)}(1)\\
&=\frac{\hat{\lambda}_{XY1}^{(d)}}{d^{\alpha}} \left( \sqrt{d^{\alpha}} \hat{\eta}_{X1}^{(d)}(3) \right) \hat{\eta}_{Y1}^{(d)}(1)+\sum_{i=2}^{n} \frac{\hat{\lambda}_{XYi}^{(d)}}{\sqrt{d^{2\alpha-1}}} \hat{\eta}_{Xi}^{(d)}(3) \left( \sqrt{d^{\alpha-1}}\hat{\eta}_{Yi}^{(d)}(1) \right),
\end{align*}
which implies that,
\begin{align*}
\sqrt{d^{\alpha}}\hat{\eta}_{X1}^{(d)}(3)=\frac{d^{\alpha}} {\hat{\lambda}_{XY1}^{(d)} \hat{\eta}_{Y1}^{(d)}(1)} \left( \frac{\tau_{X}\langle z_{X3\bullet}, d_{1} \rangle}{n\sqrt{d^{\alpha}}}-\sum_{i=2}^{n} \frac{\hat{\lambda}_{XYi}^{(d)}}{\sqrt{d^{2\alpha-1}}} \hat{\eta}_{Xi}^{(d)}(3) \left( \sqrt{d^{\alpha-1}}\hat{\eta}_{Yi}^{(d)}(1) \right) \right).
\end{align*}
In a similar argument as above, it can be shown that,
\begin{align*}
\sqrt{d^{\alpha}}\hat{\eta}_{X1}^{(d)}(j)&=\frac{d^{\alpha}} {\hat{\lambda}_{XY1}^{(d)} \hat{\eta}_{Y1}^{(d)}(1)} \left( \frac{\tau_{X}\langle z_{Xj\bullet}, d_{1} \rangle}{n\sqrt{d^{\alpha}}}-\sum_{i=2}^{n} \frac{\hat{\lambda}_{XYi}^{(d)}}{\sqrt{d^{2\alpha-1}}} \hat{\eta}_{Xi}^{(d)}(j) \left( \sqrt{d^{\alpha-1}}\hat{\eta}_{Yi}^{(d)}(1) \right) \right),\,\, j=4,5,\dots,d,\\[6pt]
\sqrt{d^{\alpha}}\hat{\eta}_{Y1}^{(d)}(k)&=\frac{d^{\alpha}} {\hat{\lambda}_{XY1}^{(d)} \hat{\eta}_{X1}^{(d)}(1)} \left( \frac{\tau_{Y}\langle c_{1}, z_{Yk\bullet} \rangle}{n\sqrt{d^{\alpha}}}-\sum_{i=2}^{n} \frac{\hat{\lambda}_{XYi}^{(d)}}{\sqrt{d^{2\alpha-1}}} \left( \sqrt{d^{\alpha-1}}\hat{\eta}_{Xi}^{(d)}(1) \right) \hat{\eta}_{Yi}^{(d)}(k) \right),\,\, k=3,4,\dots,d.
\end{align*}
Then, the term (2) goes as follows,
\begin{align*}
(2)&=2\sum_{i=2}^{n} \frac{\hat{\lambda}_{XY1}^{(d)}}{d} \frac{\hat{\lambda}_{XYi}^{(d)}}{d} \left\langle \tilde{\eta}_{X1}^{(d)}, \tilde{\eta}_{Xi}^{(d)} \right\rangle \left\langle \tilde{\eta}_{Y1}^{(d)}, \tilde{\eta}_{Yi}^{(d)} \right\rangle\\
&=2 \frac{\hat{\lambda}_{XY1}^{(d)}}{d} \sum_{i=2}^{n} \frac{\hat{\lambda}_{XYi}^{(d)}}{d} \left( \sum_{j=2}^{d} \hat{\eta}_{X1}^{(d)}(j) \hat{\eta}_{Xi}^{(d)}(j) \right) \left( \sum_{k=2}^{d} \hat{\eta}_{Y1}^{(d)}(k) \hat{\eta}_{Yi}^{(d)}(k) \right)\\
&=2 \frac{\hat{\lambda}_{XY1}^{(d)}}{d} \hat{\eta}_{X1}^{(d)}(2) \hat{\eta}_{Y1}^{(d)}(2) \sum_{i=2}^{n} \frac{\hat{\lambda}_{XYi}^{(d)}}{d} \hat{\eta}_{Xi}^{(d)}(2) \hat{\eta}_{Yi}^{(d)}(2)\\
&\quad +2 \frac{\hat{\lambda}_{XY1}^{(d)}}{d} \hat{\eta}_{X1}^{(d)}(2) \sum_{i=2}^{n} \frac{\hat{\lambda}_{XYi}^{(d)}}{d} \hat{\eta}_{Xi}^{(d)}(2) \left( \sum_{k=3}^{d} \hat{\eta}_{Y1}^{(d)}(k) \hat{\eta}_{Yi}^{(d)}(k) \right)\\
&\quad +2 \frac{\hat{\lambda}_{XY1}^{(d)}}{d} \hat{\eta}_{Y1}^{(d)}(2) \sum_{i=2}^{n} \frac{\hat{\lambda}_{XYi}^{(d)}}{d} \hat{\eta}_{Yi}^{(d)}(2) \left( \sum_{k=3}^{d} \hat{\eta}_{X1}^{(d)}(k) \hat{\eta}_{Xi}^{(d)}(k) \right)\\
&\quad +2 \frac{\hat{\lambda}_{XY1}^{(d)}}{d} \sum_{i=2}^{n} \frac{\hat{\lambda}_{XYi}^{(d)}}{d} \left( \sum_{j=3}^{d} \hat{\eta}_{X1}^{(d)}(j) \hat{\eta}_{Xi}^{(d)}(j) \right) \left( \sum_{k=3}^{d} \hat{\eta}_{Y1}^{(d)}(k) \hat{\eta}_{Yi}^{(d)}(k) \right).
\end{align*}
Using Lemma~\ref{lem4}, ~\ref{lem5} and the fact that only a finite number of entries of $\hat{\eta}_{Xi}^{(d)}$ and $\hat{\eta}_{Yi}^{(d)}$, for $i=2,3,\dots,n$, can be of magnitude ($\asymp 1/d^{a}$), for $0 \le a<1/2$, the first three terms in the last part of the above equalities becomes,
\begin{align*}
&2 \frac{\hat{\lambda}_{XY1}^{(d)}}{d} \hat{\eta}_{X1}^{(d)}(2) \hat{\eta}_{Y1}^{(d)}(2) \sum_{i=2}^{n} \frac{\hat{\lambda}_{XYi}^{(d)}}{d} \hat{\eta}_{Xi}^{(d)}(2) \hat{\eta}_{Yi}^{(d)}(2)\\
&\quad = \frac{2}{d} \frac{\hat{\lambda}_{XY1}^{(d)}}{d^{\alpha}} \left( \sqrt{d^{\alpha}} \hat{\eta}_{X1}^{(d)}(2) \right) \left( \sqrt{d^{\alpha}} \hat{\eta}_{Y1}^{(d)}(2) \right) \sum_{i=2}^{n} \frac{\hat{\lambda}_{XYi}^{(d)}}{d} \hat{\eta}_{Xi}^{(d)}(2) \hat{\eta}_{Yi}^{(d)}(2) \mathop{\longrightarrow}_{d \to \infty}^{p} 0,\\[11pt]
&2 \frac{\hat{\lambda}_{XY1}^{(d)}}{d} \hat{\eta}_{X1}^{(d)}(2) \sum_{i=2}^{n} \frac{\hat{\lambda}_{XYi}^{(d)}}{d} \hat{\eta}_{Xi}^{(d)}(2) \left( \sum_{k=3}^{d} \hat{\eta}_{Y1}^{(d)}(k) \hat{\eta}_{Yi}^{(d)}(k) \right)\\
&\quad = \frac{2}{d} \frac{\hat{\lambda}_{XY1}^{(d)}}{d^{\alpha}} \left( \sqrt{d^{\alpha}} \hat{\eta}_{X1}^{(d)}(2) \right) \sum_{i=2}^{n} \frac{\hat{\lambda}_{XYi}^{(d)}}{d} \left( \sqrt{d} \hat{\eta}_{Xi}^{(d)}(2) \right) \left( \sum_{k=3}^{d} \frac{\left( \sqrt{d^{\alpha}} \hat{\eta}_{Y1}^{(d)}(k) \right) \left( \sqrt{d} \hat{\eta}_{Yi}^{(d)}(k) \right)}{d} \right) \mathop{\longrightarrow}_{d \to \infty}^{p} 0,\\[11pt]
&2 \frac{\hat{\lambda}_{XY1}^{(d)}}{d} \hat{\eta}_{Y1}^{(d)}(2) \sum_{i=2}^{n} \frac{\hat{\lambda}_{XYi}^{(d)}}{d} \hat{\eta}_{Yi}^{(d)}(2) \left( \sum_{k=3}^{d} \hat{\eta}_{X1}^{(d)}(k) \hat{\eta}_{Xi}^{(d)}(k) \right)\\
&\quad = \frac{2}{d} \frac{\hat{\lambda}_{XY1}^{(d)}}{d^{\alpha}} \left( \sqrt{d^{\alpha}} \hat{\eta}_{Y1}^{(d)}(2) \right) \sum_{i=2}^{n} \frac{\hat{\lambda}_{XYi}^{(d)}}{d} \left( \sqrt{d} \hat{\eta}_{Yi}^{(d)}(2) \right) \left( \sum_{k=3}^{d} \frac{\left( \sqrt{d^{\alpha}} \hat{\eta}_{Y1}^{(d)}(k) \right) \left( \sqrt{d} \hat{\eta}_{Yi}^{(d)}(k) \right)}{d} \right) \mathop{\longrightarrow}_{d \to \infty}^{p} 0.
\end{align*}
The fourth term expands as,
\begin{align*}
&2 \frac{\hat{\lambda}_{XY1}^{(d)}}{d} \sum_{i=2}^{n} \frac{\hat{\lambda}_{XYi}^{(d)}}{d} \left( \sum_{j=3}^{d} \hat{\eta}_{X1}^{(d)}(j) \hat{\eta}_{Xi}^{(d)}(j) \right) \left( \sum_{k=3}^{d} \hat{\eta}_{Y1}^{(d)}(k) \hat{\eta}_{Yi}^{(d)}(k) \right)\\
&\quad =2 \frac{\hat{\lambda}_{XY1}^{(d)}}{d^{\alpha}} \frac{1}{d} \sum_{i=2}^{n} \frac{\hat{\lambda}_{XYi}^{(d)}}{d} \left( \sum_{j=3}^{d} \sqrt{d^{\alpha}} \hat{\eta}_{X1}^{(d)}(j) \hat{\eta}_{Xi}^{(d)}(j) \right) \left( \sum_{k=3}^{d} \sqrt{d^{\alpha}} \hat{\eta}_{Y1}^{(d)}(k) \hat{\eta}_{Yi}^{(d)}(k) \right)\\
&\quad =2 \frac{\hat{\lambda}_{XY1}^{(d)}}{d^{\alpha}} \frac{1}{d} \sum_{i=2}^{n} \frac{\hat{\lambda}_{XYi}^{(d)}}{d}\\
&\quad\quad \times \left( \sum_{j=3}^{d} \frac{d^{\alpha}} {\hat{\lambda}_{XY1}^{(d)} \hat{\eta}_{Y1}^{(d)}(1)} \left( \frac{\tau_{X}\langle z_{Xj\bullet}, d_{1} \rangle}{n\sqrt{d^{\alpha}}}-\sum_{l=2}^{n} \frac{\hat{\lambda}_{XYl}^{(d)}}{\sqrt{d^{2\alpha-1}}} \hat{\eta}_{Xl}^{(d)}(j) \left( \sqrt{d^{\alpha-1}}\hat{\eta}_{Yl}^{(d)}(1) \right) \right) \hat{\eta}_{Xi}^{(d)}(j) \right)\\
&\quad\quad \times \left( \sum_{k=3}^{d} \frac{d^{\alpha}} {\hat{\lambda}_{XY1}^{(d)} \hat{\eta}_{X1}^{(d)}(1)} \left( \frac{\tau_{Y}\langle c_{1}, z_{Yk\bullet} \rangle}{n\sqrt{d^{\alpha}}}-\sum_{l=2}^{n} \frac{\hat{\lambda}_{XYl}^{(d)}}{\sqrt{d^{2\alpha-1}}} \left( \sqrt{d^{\alpha-1}}\hat{\eta}_{Xl}^{(d)}(1) \right) \hat{\eta}_{Yl}^{(d)}(k) \right) \hat{\eta}_{Yi}^{(d)}(k) \right).
\end{align*}
Using the the Cauchy-Schwarz inequality on $\langle \hat{\eta}_{Xi}^{(d)}, \hat{\eta}_{Yj}^{(d)} \rangle$, for $i,j=2,3,\dots,n$, the law of large number and the fact that $\alpha > 1$ and only a finite number of entries of $\hat{\eta}_{Xi}^{(d)}$ and $\hat{\eta}_{Yi}^{(d)}$, for $i=2,3,\dots,n$, can be of magnitude ($\asymp 1/d^{a}$), for $0 \le a<1/2$,, it is not hard to see that, 
\begin{align*}
&2 \frac{\hat{\lambda}_{XY1}^{(d)}}{d^{\alpha}} \frac{1}{d} \sum_{i=2}^{n} \left[ \frac{\hat{\lambda}_{XYi}^{(d)}}{d} \left( \sum_{j=3}^{d} \frac{d^{\alpha}} {\hat{\lambda}_{XY1}^{(d)} \hat{\eta}_{Y1}^{(d)}(1)} \left( -\sum_{l=2}^{n} \frac{\hat{\lambda}_{XYl}^{(d)}}{\sqrt{d^{2\alpha-1}}} \hat{\eta}_{Xl}^{(d)}(j) \left( \sqrt{d^{\alpha-1}}\hat{\eta}_{Yl}^{(d)}(1) \right) \right) \hat{\eta}_{Xi}^{(d)}(j) \right) \right.\\
&\left. \quad \times \left( \sum_{k=3}^{d} \frac{d^{\alpha}} {\hat{\lambda}_{XY1}^{(d)} \hat{\eta}_{X1}^{(d)}(1)} \left( -\sum_{l=2}^{n} \frac{\hat{\lambda}_{XYl}^{(d)}}{\sqrt{d^{2\alpha-1}}} \left( \sqrt{d^{\alpha-1}}\hat{\eta}_{Xl}^{(d)}(1) \right) \hat{\eta}_{Yl}^{(d)}(k) \right) \hat{\eta}_{Yi}^{(d)}(k) \right) \right]\\
&\,\, =2 \frac{\hat{\lambda}_{XY1}^{(d)}}{d^{\alpha}} \frac{1}{d^{\alpha-1}} \left( \frac{d^{\alpha}}{\hat{\lambda}_{XY1}^{(d)} \hat{\eta}_{Y1}^{(d)}(1)} \right)^{2} \sum_{i=2}^{n} \left[ \frac{\hat{\lambda}_{XYi}^{(d)}}{d} \left( \sum_{j=3}^{d} \left( -\sum_{l=2}^{n} \frac{\hat{\lambda}_{XYl}^{(d)}}{d} \hat{\eta}_{Xl}^{(d)}(j) \left( \sqrt{d^{\alpha-1}}\hat{\eta}_{Yl}^{(d)}(1) \right) \right) \hat{\eta}_{Xi}^{(d)}(j) \right) \right.\\
&\left. \quad \times \left( \sum_{k=3}^{d} \left( -\sum_{l=2}^{n} \frac{\hat{\lambda}_{XYl}^{(d)}}{d} \left( \sqrt{d^{\alpha-1}}\hat{\eta}_{Xl}^{(d)}(1) \right) \hat{\eta}_{Yl}^{(d)}(k) \right) \hat{\eta}_{Yi}^{(d)}(k) \right) \right] \mathop{\longrightarrow}_{d \to \infty}^{p} 0,\\[13pt]
&2 \frac{\hat{\lambda}_{XY1}^{(d)}}{d^{\alpha}} \frac{1}{d} \sum_{i=2}^{n} \left[ \frac{\hat{\lambda}_{XYi}^{(d)}}{d} \sum_{j=3}^{d} \left( \frac{ d^{\alpha}} {\hat{\lambda}_{XY1}^{(d)} \hat{\eta}_{Y1}^{(d)}(1)} \frac{\tau_{X}\langle z_{Xj\bullet}, d_{1} \rangle}{n\sqrt{d^{\alpha}}} \hat{\eta}_{Xi}^{(d)}(j) \right) \right.\\
&\left. \quad \times \left( \sum_{k=3}^{d} \frac{d^{\alpha}} {\hat{\lambda}_{XY1}^{(d)} \hat{\eta}_{X1}^{(d)}(1)} \left( -\sum_{l=2}^{n} \frac{\hat{\lambda}_{XYl}^{(d)}}{\sqrt{d^{2\alpha-1}}} \left( \sqrt{d^{\alpha-1}}\hat{\eta}_{Xl}^{(d)}(1) \right) \hat{\eta}_{Yi}^{(d)}(k) \right) \hat{\eta}_{Yl}^{(d)}(k) \right) \right]\\
&\,\, =2 \frac{\hat{\lambda}_{XY1}^{(d)}}{d^{\alpha}} \frac{1}{\sqrt{d^{\alpha-1}}} \left( \frac{d^{\alpha}}{\hat{\lambda}_{XY1}^{(d)} \hat{\eta}_{Y1}^{(d)}(1)} \right)^{2} \sum_{i=2}^{n} \left[ \frac{\hat{\lambda}_{XYi}^{(d)}}{d} \sum_{j=3}^{d} \left( \frac{\tau_{X}\langle z_{Xj\bullet}, d_{1} \rangle}{n\sqrt{d^{\alpha}}} \frac{\hat{ \sqrt{d}\eta}_{Xi}^{(d)}(j)}{d} \right) \right.\\
&\left. \quad \times \left( \sum_{k=3}^{d} \left( -\sum_{l=2}^{n} \frac{\hat{\lambda}_{XYl}^{(d)}}{d} \left( \sqrt{d^{\alpha-1}}\hat{\eta}_{Xl}^{(d)}(1) \right) \hat{\eta}_{Yl}^{(d)}(k) \right) \hat{\eta}_{Yi}^{(d)}(k) \right) \right] \mathop{\longrightarrow}_{d \to \infty}^{p} 0,\\[13pt]
&2 \frac{\hat{\lambda}_{XY1}^{(d)}}{d^{\alpha}} \frac{1}{d} \sum_{i=2}^{n} \left[ \frac{\hat{\lambda}_{XYi}^{(d)}}{d} \sum_{j=3}^{d} \left( \frac{ d^{\alpha}} {\hat{\lambda}_{XY1}^{(d)} \hat{\eta}_{Y1}^{(d)}(1)} \frac{\tau_{Y}\langle c_{1}, z_{Yk\bullet} \rangle}{n\sqrt{d^{\alpha}}} \hat{\eta}_{Yi}^{(d)}(j) \right) \right.\\
&\left. \quad \times \left( \sum_{k=3}^{d} \frac{d^{\alpha}} {\hat{\lambda}_{XY1}^{(d)} \hat{\eta}_{X1}^{(d)}(1)} \left( -\sum_{l=2}^{n} \frac{\hat{\lambda}_{XYl}^{(d)}}{\sqrt{d^{2\alpha-1}}} \left( \sqrt{d^{\alpha-1}}\hat{\eta}_{Xl}^{(d)}(1) \right) \hat{\eta}_{Yl}^{(d)}(k) \right) \hat{\eta}_{Yi}^{(d)}(k) \right)\right] 
\mathop{\longrightarrow}_{d \to \infty}^{p} 0,\\[13pt]
&2 \frac{\hat{\lambda}_{XY1}^{(d)}}{d^{\alpha}} \frac{1}{d} \sum_{i=2}^{n} \frac{\hat{\lambda}_{XYi}^{(d)}}{d} \left( \sum_{j=3}^{d} \frac{d^{\alpha}} {\hat{\lambda}_{XY1}^{(d)} \hat{\eta}_{Y1}^{(d)}(1)} \left( \frac{\tau_{X}\langle z_{Xj\bullet}, d_{1} \rangle}{n\sqrt{d^{\alpha}}} \right) \hat{\eta}_{Xi}^{(d)}(j) \right)\\
&\quad \times \left( \sum_{k=3}^{d} \frac{d^{\alpha}} {\hat{\lambda}_{XY1}^{(d)} \hat{\eta}_{X1}^{(d)}(1)} \left( \frac{\tau_{Y}\langle c_{1}, z_{Yk\bullet} \rangle}{n\sqrt{d^{\alpha}}} \right) \hat{\eta}_{Yi}^{(d)}(k) \right)\\
&\,\, =2 \frac{\hat{\lambda}_{XY1}^{(d)}}{d^{\alpha}} \left( \frac{d^{\alpha}} {\hat{\lambda}_{XY1}^{(d)} \hat{\eta}_{Y1}^{(d)}(1)} \right)^{2} \sum_{i=2}^{n} \left[ \frac{\hat{\lambda}_{XYi}^{(d)}}{d} \left( \sum_{j=3}^{d} \left( \frac{\tau_{X}\langle z_{Xj\bullet}, d_{1} \rangle}{n\sqrt{d^{\alpha}}} \right) \frac{\sqrt{d}\hat{\eta}_{Xi}^{(d)}(j)}{d} \right) \right.\\
&\left. \quad \times \left( \sum_{k=3}^{d} \left( \frac{\tau_{Y}\langle c_{1}, z_{Yk\bullet} \rangle}{n\sqrt{d^{\alpha}}} \right) \frac{\sqrt{d} \hat{\eta}_{Yi}^{(d)}(k)}{d} \right) \right] \mathop{\longrightarrow}_{d \to \infty}^{p} 0.
\end{align*}
We now prove that the term (1) converges in probability to $\tau_{X}^{2}\tau_{Y}^{2}$ as $d \to \infty$,
\begin{align*}
(1)=\left( \frac{\hat{\lambda}_{XY1}^{(d)}}{d} \right)^{2} \sum_{j=2}^{d} \left( \hat{\eta}_{X1}^{(d)}(j) \right)^{2} \sum_{k=2}^{d} \left( \hat{\eta}_{Y1}^{(d)}(k) \right)^{2}
\mathop{\longrightarrow}_{d \to \infty}^{p} \tau_{X}^{2}\tau_{Y}^{2}.
\end{align*}
which implies that $\hat{\lambda}_{XYi}^{(d)}/d$, for $i=2,3,\dots,n$, are squeezed, by the condition of~(\ref{sec3:equ36}), to converge in probability to 0 as $d \to \infty$. The term (1) can be written as,
\begin{align*}
&\left( \frac{\hat{\lambda}_{XY1}^{(d)}}{d} \right)^{2} \sum_{j=2}^{d} \left( \hat{\eta}_{X1}^{(d)}(j) \right)^{2} \sum_{k=2}^{d} \left( \hat{\eta}_{Y1}^{(d)}(k) \right)^{2}\\
&\,\, =\left( \frac{\hat{\lambda}_{XY1}^{(d)}}{d} \right)^{2} \left( \hat{\eta}_{X1}^{(d)}(2) \right)^{2} \left( \hat{\eta}_{Y1}^{(d)}(2) \right)^{2}
+\left( \frac{\hat{\lambda}_{XY1}^{(d)}}{d} \right)^{2} \left( \hat{\eta}_{X1}^{(d)}(2) \right)^{2} \sum_{k=3}^{d} \left( \hat{\eta}_{Y1}^{(d)}(k) \right)^{2}\\
&\quad +\left( \frac{\hat{\lambda}_{XY1}^{(d)}}{d} \right)^{2} \left( \hat{\eta}_{Y1}^{(d)}(2) \right)^{2} \sum_{j=3}^{d} \left( \hat{\eta}_{X1}^{(d)}(j) \right)^{2}
+\left( \frac{\hat{\lambda}_{XY1}^{(d)}}{d} \right)^{2} \sum_{j=3}^{d} \left( \hat{\eta}_{X1}^{(d)}(j) \right)^{2} \sum_{k=3}^{d} \left( \hat{\eta}_{Y1}^{(d)}(k) \right)^{2}.
\end{align*}
By the use of Lemma~\ref{lem4} and~\ref{lem5},
\begin{align*}
\left( \frac{\hat{\lambda}_{XY1}^{(d)}}{d} \right)^{2} \left( \hat{\eta}_{X1}^{(d)}(2) \right)^{2} \left( \hat{\eta}_{Y1}^{(d)}(2) \right)^{2}
&=\left( \frac{\hat{\lambda}_{XY1}^{(d)}}{d^{\alpha}} \right)^{2} \left( \sqrt{d^{\alpha}}\hat{\eta}_{X1}^{(d)}(2) \right)^{2} \frac{1}{d^{2}}\left( \sqrt{d^{\alpha}}\hat{\eta}_{Y1}^{(d)}(2) \right)^{2} \mathop{\longrightarrow}_{d \to \infty}^{p} 0,\\[7pt]
\left( \frac{\hat{\lambda}_{XY1}^{(d)}}{d} \right)^{2} \left( \hat{\eta}_{X1}^{(d)}(2) \right)^{2} \sum_{k=3}^{d} \left( \hat{\eta}_{Y1}^{(d)}(k) \right)^{2}
&=\left( \frac{\hat{\lambda}_{XY1}^{(d)}}{d^{\alpha}} \right)^{2} \frac{\left( \sqrt{d^{\alpha}}\hat{\eta}_{X1}^{(d)}(2) \right)^{2}}{d} \sum_{k=3}^{n}\frac{\left( \sqrt{d^{\alpha}}\hat{\eta}_{Y1}^{(d)}(k) \right)^{2}}{d}\\
&=\left( \frac{\hat{\lambda}_{XY1}^{(d)}}{d^{\alpha}} \right)^{2} \frac{1}{d} \frac{\sum_{k=3}^{n}\left( \sqrt{d^{\alpha}}\hat{\eta}_{X1}^{(d)}(2) \right)^{2} \left( \sqrt{d^{\alpha}}\hat{\eta}_{Y1}^{(d)}(k) \right)^{2}}{d}
\mathop{\longrightarrow}_{d \to \infty}^{p} 0,\\[7pt]
\left( \frac{\hat{\lambda}_{XY1}^{(d)}}{d} \right)^{2} \left( \hat{\eta}_{Y1}^{(d)}(2) \right)^{2} \sum_{k=3}^{d} \left( \hat{\eta}_{X1}^{(d)}(k) \right)^{2}
&=\left( \frac{\hat{\lambda}_{XY1}^{(d)}}{d^{\alpha}} \right)^{2} \frac{\left( \sqrt{d^{\alpha}}\hat{\eta}_{Y1}^{(d)}(2) \right)^{2}}{d} \sum_{k=3}^{n}\frac{\left( \sqrt{d^{\alpha}}\hat{\eta}_{X1}^{(d)}(k) \right)^{2}}{d}\\
&=\left( \frac{\hat{\lambda}_{XY1}^{(d)}}{d^{\alpha}} \right)^{2} \frac{1}{d} \frac{\sum_{k=3}^{n}\left( \sqrt{d^{\alpha}}\hat{\eta}_{Y1}^{(d)}(2) \right)^{2} \left( \sqrt{d^{\alpha}}\hat{\eta}_{X1}^{(d)}(k) \right)^{2}}{d}
\mathop{\longrightarrow}_{d \to \infty}^{p} 0.
\end{align*}
Note that the magnitudes of $\tau_{X}\langle z_{X3\bullet}, d_{1} \rangle/n\sqrt{d^{\alpha}}$ and $\tau_{Y}\langle c_{1}, z_{Y3\bullet} \rangle/n\sqrt{d^{\alpha}}$ are of ($\asymp 1$) from~(\ref{sec3:equ18}). Using Lemma~\ref{lem4},~\ref{lem5} and~(\ref{sec3:equ35}),
\begin{align*}
&\left( \frac{\hat{\lambda}_{XY1}^{(d)}}{d} \right)^{2} \sum_{j=3}^{d} \left( \hat{\eta}_{X1}^{(d)}(j) \right)^{2} \sum_{k=3}^{d} \left( \hat{\eta}_{Y1}^{(d)}(k) \right)^{2}\\
&\,\, =\left( \frac{\hat{\lambda}_{XY1}^{(d)}}{d^{\alpha}} \right)^{2} \frac{1}{d^{2}} \sum_{j=3}^{d} \left( \sqrt{d^{\alpha}}\hat{\eta}_{X1}^{(d)}(j) \right)^{2} \sum_{k=3}^{d} \left( \sqrt{d^{\alpha}}\hat{\eta}_{Y1}^{(d)}(k) \right)^{2}\\
&\,\, =\left( \frac{d^{\alpha}}{\hat{\lambda}_{XY1}^{(d)}} \right)^{2} \frac{1}{d^{2}} \sum_{j=3}^{d} \left( \frac{\tau_{X}\langle z_{Xj\bullet}, d_{1} \rangle}{n\sqrt{d^{\alpha}}}-\sum_{i=2}^{n} \frac{\hat{\lambda}_{XYi}^{(d)}}{\sqrt{d^{2\alpha-1}}} \hat{\eta}_{Xi}^{(d)}(j) \left( \sqrt{d^{\alpha-1}}\hat{\eta}_{Yi}^{(d)}(1) \right) \right)^{2}\\
&\quad \quad \times \sum_{k=3}^{d} \left( \frac{\tau_{Y}\langle c_{1}, z_{Yk\bullet} \rangle}{n\sqrt{d^{\alpha}}}-\sum_{i=2}^{n} \frac{\hat{\lambda}_{XYi}^{(d)}}{\sqrt{d^{2\alpha-1}}} \left( \sqrt{d^{\alpha-1}}\hat{\eta}_{Xi}^{(d)}(1) \right) \hat{\eta}_{Yi}^{(d)}(k) \right)^{2}\\
&\,\, =\left( \frac{d^{\alpha}}{\hat{\lambda}_{XY1}^{(d)}} \right)^{2} \frac{1}{d^{2}} \sum_{j=3}^{d} \left( \frac{\tau_{X}\langle z_{Xj\bullet}, d_{1} \rangle}{n\sqrt{d^{\alpha}}} \right)^{2} \sum_{k=3}^{d} \left( \frac{\tau_{Y}\langle c_{1}, z_{Yk\bullet} \rangle}{n\sqrt{d^{\alpha}}} \right)^{2}\\
&\quad -4\left( \frac{d^{\alpha}}{\hat{\lambda}_{XY1}^{(d)}} \right)^{2} \frac{1}{d^{2\alpha-1}} \left( \sum_{j=3}^{d} \frac{\tau_{X}\langle z_{Xj\bullet}, d_{1} \rangle}{n\sqrt{d^{\alpha}}}\sum_{i=2}^{n} \frac{\hat{\lambda}_{XYi}^{(d)}}{d} \hat{\eta}_{Xi}^{(d)}(j) \left( \sqrt{d^{\alpha-1}}\hat{\eta}_{Yi}^{(d)}(1) \right) \right)\\
&\quad \quad \times \left( \sum_{k=3}^{d} \frac{\tau_{Y}\langle c_{1}, z_{Yk\bullet} \rangle}{n\sqrt{d^{\alpha}}}\sum_{i=2}^{n} \frac{\hat{\lambda}_{XYi}^{(d)}}{d} \left( \sqrt{d^{\alpha-1}}\hat{\eta}_{Xi}^{(d)}(1) \right) \hat{\eta}_{Yi}^{(d)}(k) \right)\\
&\quad +\left( \frac{d^{\alpha}}{\hat{\lambda}_{XY1}^{(d)}} \right)^{2} \frac{1}{d^{4(\alpha-1)}} \sum_{j=3}^{d} \left( \sum_{i=2}^{n} \frac{\hat{\lambda}_{XYi}^{(d)}}{d} \hat{\eta}_{Xi}^{(d)}(j) \left( \sqrt{d^{\alpha-1}}\hat{\eta}_{Yi}^{(d)}(1) \right) \right)^{2}\\
&\quad \quad \times \sum_{k=3}^{d} \left( \sum_{i=2}^{n} \frac{\hat{\lambda}_{XYi}^{(d)}}{d} \left( \sqrt{d^{\alpha-1}}\hat{\eta}_{Xi}^{(d)}(1) \right) \hat{\eta}_{Yi}^{(d)}(k) \right)^{2}.
\end{align*}
The second term in the last equality above converges in probability to 0. Using the fact that $\alpha > 1$ and only a finite number of entries of $\hat{\eta}_{Xi}^{(d)}$ and $\hat{\eta}_{Yi}^{(d)}$, for $i=2,3,\dots,n$, can be of magnitude ($\asymp 1/d^{a}$), for $0 \le a<1/2$,
\begin{align*}
&\frac{1}{d^{2\alpha-2}} \left( \sum_{i=2}^{n} \frac{\hat{\lambda}_{XYi}^{(d)}}{d} \left( \sqrt{d^{\alpha-1}}\hat{\eta}_{Yi}^{(d)}(1) \right) \sum_{j=3}^{d} \frac{\tau_{X}\langle z_{Xj\bullet}, d_{1} \rangle}{n\sqrt{d^{\alpha}}} \frac{\sqrt{d}\hat{\eta}_{Xi}^{(d)}(j)}{d} \right)\\
&\,\, \times \left( \sum_{i=2}^{n} \frac{\hat{\lambda}_{XYi}^{(d)}}{d} \left( \sqrt{d^{\alpha-1}}\hat{\eta}_{Xi}^{(d)}(1) \right) \sum_{k=3}^{d} \frac{\tau_{Y}\langle c_{1}, z_{Yk\bullet} \rangle}{n\sqrt{d^{\alpha}}} \frac{\sqrt{d}\hat{\eta}_{Yi}^{(d)}(k)}{d} \right) \mathop{\longrightarrow}_{d \to \infty}^{p} 0.
\end{align*}
The third term also converges in probability to 0. It easy to see the result with the following equivalent form, noting that each inner product is bounded by 1 by the Cauchy-Schwarz inequality and that $\alpha > 1$,
\begin{align*}
&\frac{1}{d^{4(\alpha-1)}} \left( \sum_{i,j=2}^{n} \left( \frac{\hat{\lambda}_{XYi}^{(d)}}{d} \right)^{2} \left( \frac{\hat{\lambda}_{XYj}^{(d)}}{d} \right)^{2} \left( \sqrt{d^{\alpha-1}}\hat{\eta}_{Yi}^{(d)}(1) \right)^{2} \left( \sqrt{d^{\alpha-1}}\hat{\eta}_{Xj}^{(d)}(1) \right)^{2} \sum_{k=3}^{d} \left( \hat{\eta}_{Xi}^{(d)}(k) \right)^{2} \left( \hat{\eta}_{Yj}^{(d)}(k) \right)^{2} \right)\\
&\,\, + \frac{4}{d^{4(\alpha-1)}} \left( \sum_{2 \le a,a' \le n, a \not= a'} \sum_{2 \le b,b' \le n, b \not= b'} \frac{\hat{\lambda}_{XYa}^{(d)}}{d} \frac{\hat{\lambda}_{XYa'}^{(d)}}{d} \frac{\hat{\lambda}_{XYb}^{(d)}}{d} \frac{\hat{\lambda}_{XYb'}^{(d)}}{d} \sqrt{d^{\alpha-1}}\hat{\eta}_{Ya}^{(d)}(1) \sqrt{d^{\alpha-1}}\hat{\eta}_{Ya'}^{(d)}(1) \right.\\ 
&\left. \quad \quad \quad \quad \quad \quad \times \sqrt{d^{\alpha-1}}\hat{\eta}_{Xb}^{(d)}(1) \sqrt{d^{\alpha-1}}\hat{\eta}_{Xb'}^{(d)}(1) \sum_{k=3}^{d} \left\langle \hat{\eta}_{Xa}^{(d)}(k), \hat{\eta}_{Xa'}^{(d)}(k) \right\rangle \left\langle \hat{\eta}_{Yb}^{(d)}(k), \hat{\eta}_{Yb'}^{(d)}(k) \right\rangle \right) \mathop{\longrightarrow}_{d \to \infty}^{p} 0.
\end{align*}
Now, look at the first term. Denote by $d_{1}(i)$ and $c_{1}(i)$ the $i$th entries of the vectors $d_{1}$ and $c_{1}$ given in~(\ref{sec3:equ18}). Then, recalling that $z_{Xij}^{2} \sim \chi_{1}^{2}$ (similarly $z_{Yij}^{2}$) and that $z_{Xki}$ and $z_{Xkj}$ are independent (similarly for $z_{Yki}$ and $z_{Ykj}$) and using the limiting quantity of the first sample singular value in Lemma~\ref{lem4},
\begin{align*}
\tau_{X}^{2} \tau_{Y}^{2} &\left( \frac{d^{\alpha}}{\hat{\lambda}_{XY1}^{(d)}} \right)^{2} \frac{1}{d^{2}} \sum_{j=3}^{d} \left( \frac{\langle z_{Xj\bullet}, d_{1} \rangle}{n\sqrt{d^{\alpha}}} \right)^{2} \sum_{k=3}^{d} \left( \frac{\langle c_{1}, z_{Yk\bullet} \rangle}{n\sqrt{d^{\alpha}}} \right)^{2}\\
&=\tau_{X}^{2} \tau_{Y}^{2} \frac{1}{n^{2}}\left( \frac{d^{\alpha}}{\hat{\lambda}_{XY1}^{(d)}} \right)^{2} \left(\sum_{i=1}^{n} \frac{d_{1}^{2}(i)}{d^{\alpha}} \sum_{j=3}^{d} \frac{z_{Xji}^{2}}{d}+2\sum_{1\le i,j \le n, i\not= j} \sum_{k=1}^{d} \frac{d_{1}(i)d_{1}(j)}{d^{\alpha}} \frac{z_{Xki}z_{Xkj}}{d} \right)\\
&\quad \times \left(\sum_{i=1}^{n} \frac{c_{1}^{2}(i)}{d^{\alpha}} \sum_{j=3}^{d} \frac{z_{Yji}^{2}}{d}+2\sum_{1\le i,j \le n, i\not= j} \sum_{k=1}^{d} \frac{c_{1}(i)c_{1}(j)}{d^{\alpha}} \frac{z_{Yki}z_{Ykj}}{d} \right)\\
& \mathop{\longrightarrow}_{d \to \infty}^{p} \tau_{X}^{2} \tau_{Y}^{2} \frac{1}{n^{2}} \frac{n^{2} \|c_{1}\|_{2}^{2} \|d_{1}\|_{2}^{2}}{\langle c_{1}, d_{1} \rangle^{2}}\\
&\ge \tau_{X}^{2} \tau_{Y}^{2},
\end{align*}
where the last inequality results from the Cauchy-Schwarz inequality of $\langle c_{1}, d_{1} \rangle^{2} \le \|c_{1}\|_{2}^{2} \|d_{1}\|_{2}^{2}$. Then, the condition of~(\ref{sec3:equ36}) completes the proof. 
\end{proof}

\subsubsection{Behavior of the sample covariance matrices}

We investigate the HDLSS asymptotic behavior of the sample covariance matrices $\hat{\mathbf{\Sigma}}^{(d)}_{X}$ and $\hat{\mathbf{\Sigma}}^{(d)}_{Y}$ given in~(\ref{sec3:equ19}), in specific, its sample eigenvalues and eigenvectors. Here, we only include the result of the analysis of $\hat{\mathbf{\Sigma}}^{(d)}_{X}$ as that of $\hat{\mathbf{\Sigma}}^{(d)}_{Y}$ is similar. The eigendecomposition of $\hat{\mathbf{\Sigma}}^{(d)}_{X}$ gives,
\begin{align}
\label{sec3:equ21}
\hat{\mathbf{\Sigma}}^{(d)}_{X} =  \sum_{i=1}^{n} \hat{\lambda}_{Xi}^{(d)} \hat{\xi}_{Xi}^{(d)} \left( \hat{\xi}_{Xi}^{(d)} \right)^{T},
\end{align}
where $\hat{\lambda}_{X1}^{(d)} \ge \hat{\lambda}_{X2}^{(d)} \ge \dots \ge \hat{\lambda}_{Xn}^{(d)} \ge 0$, $\| \hat{\xi}_{Xi}^{(d)} \|_{2} = 1$ and $\langle \hat{\xi}_{Xi}^{(d)}, \hat{\xi}_{Xj}^{(d)} \rangle = 0$ for $i \not= j$.
\begin{lem}[CCA HDLSS Asymptotic lemma 7.]
\label{lem7}
Let $C_{X}$ and $\mathbf{M}_{X}$ be,
\begin{align*}
C_{X}=\lim_{d\to\infty}\frac{\| c_{1} \|_{2}^{2}}{d^{\alpha}}, \,\, \mathbf{M}_{X}=
\begin{bmatrix}
C_{X} &\underset{1 \times (d-1)}{\mathbf{0}}\\
\underset{(d-1) \times 1}{\mathbf{0}} &\underset{(d-1) \times (d-1)}{\mathbf{0}}\\
\end{bmatrix}.
\end{align*}
where $c_{1}$ is defined in~(\ref{sec3:equ18}) and so $C_{X}$ is a non-degenerate random variable. Then, for $\alpha > 1$,
\begin{align*}
\left\| \frac{n\hat{\mathbf{\Sigma}}^{(d)}_{X}}{d^{\alpha}}-\mathbf{M}_{X} \right\|_{F}^{2} \mathop{\longrightarrow}_{d \to \infty}^{p} 0,
\end{align*}
\end{lem}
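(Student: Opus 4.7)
The plan is to mirror the proof of Lemma~\ref{lem2} almost verbatim, with the $\mathbf{Y}$-side objects replaced by their $\mathbf{X}$-side counterparts; the only structural change is that now the $(1,1)$ entry of $n\hat{\mathbf{\Sigma}}_{X}^{(d)}/d^{\alpha}$ equals $\|c_{1}\|_{2}^{2}/d^{\alpha}$, which is precisely the entry absorbed into the limiting matrix $\mathbf{M}_{X}$, so it is the only surviving contribution.

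Concretely, first I would introduce the intermediate random matrix
\begin{align*}
\mathbf{M}_{X}^{(d)}=
\begin{bmatrix}
\|c_{1}\|_{2}^{2}/d^{\alpha} & \underset{1\times(d-1)}{\mathbf{0}}\\
\underset{(d-1)\times 1}{\mathbf{0}} & \underset{(d-1)\times(d-1)}{\mathbf{0}}
\end{bmatrix}.
\end{align*}
By the definition of $C_{X}$ as the $d\to\infty$ limit of $\|c_{1}\|_{2}^{2}/d^{\alpha}$, we immediately have $\|\mathbf{M}_{X}^{(d)}-\mathbf{M}_{X}\|_{F}^{2}\to 0$ in probability, so by the triangle inequality it suffices to show that $\|n\hat{\mathbf{\Sigma}}_{X}^{(d)}/d^{\alpha}-\mathbf{M}_{X}^{(d)}\|_{F}^{2}\to 0$ in probability.

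Next I would expand that squared Frobenius norm entry-wise using the explicit form of $\hat{\mathbf{\Sigma}}_{X}^{(d)}$ recorded in the Settings subsection. The non-zero contributions come from (i) the $(1,2)$ and $(2,2)$ entries, which are $\langle c_{1},c_{2}\rangle/d^{\alpha}$ and $\langle c_{2},c_{2}\rangle/d^{\alpha}$ and are $O_{P}(d^{-\alpha/2})$ and $O_{P}(d^{-\alpha})$ respectively (using $c_{1}=O_{P}(\sqrt{d^{\alpha}})$, $c_{2}=O_{P}(1)$); (ii) the cross-row sums $\tau_{X}^{2}\sum_{i=3}^{d}\langle c_{k},z_{Xi\bullet}\rangle^{2}/d^{2\alpha}$ for $k=1,2$, which by Cauchy--Schwarz are at most $\tau_{X}^{2}d^{-(\alpha-1)}\|c_{k}/\sqrt{d^{\alpha}}\|_{2}^{2}\cdot\sum_{i=3}^{d}\|z_{Xi\bullet}\|_{2}^{2}/d$; and (iii) the double tail sum $\tau_{X}^{4}\sum_{i,j=3}^{d}\langle z_{Xi\bullet},z_{Xj\bullet}\rangle^{2}/d^{2\alpha}$, which by Cauchy--Schwarz is bounded by $\tau_{X}^{4}d^{-(2\alpha-2)}\bigl(\sum_{i=3}^{d}\|z_{Xi\bullet}\|_{2}^{2}/d\bigr)^{2}$. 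Since $\|z_{Xi\bullet}\|_{2}^{2}\sim\chi_{n}^{2}$ and $\alpha>1$, the law of large numbers drives each of these bounds to $0$ in probability, exactly as in the $\hat{\mathbf{\Sigma}}_{XY}^{(d)}$ case.

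I do not anticipate any genuine obstacle beyond carefully tracking the magnitudes, because the structural features that made Lemma~\ref{lem2} work (the single leading direction, the $\alpha>1$ scaling that kills $d^{2-2\alpha}$ type factors, and the chi-square concentration of $\|z_{Xi\bullet}\|_{2}^{2}$) are all present here unchanged. The one conceptually new verification is the claim $\|c_{1}\|_{2}^{2}/d^{\alpha}\to C_{X}$ in probability, which is needed to justify the very existence of the limit defining $C_{X}$; this follows from $\|c_{1}\|_{2}^{2}=a_{1}^{2}\|z_{X1\bullet}\|_{2}^{2}+a_{3}^{2}\|z_{Y1\bullet}\|_{2}^{2}+(\text{lower order cross terms})$, with $a_{1},a_{3}\asymp\sqrt{d^{\alpha}}$ and $\|z_{X1\bullet}\|_{2}^{2},\|z_{Y1\bullet}\|_{2}^{2}=O_{P}(1)$ non-degenerate, so $\|c_{1}\|_{2}^{2}/d^{\alpha}$ converges in probability to a non-degenerate limit, which we name $C_{X}$.
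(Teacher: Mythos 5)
Your proposal is correct and follows essentially the same route as the paper's proof: introduce the intermediate matrix $\mathbf{M}_{X}^{(d)}$ with $(1,1)$ entry $\|c_{1}\|_{2}^{2}/d^{\alpha}$, reduce to showing $\|n\hat{\mathbf{\Sigma}}_{X}^{(d)}/d^{\alpha}-\mathbf{M}_{X}^{(d)}\|_{F}^{2}\to 0$, and kill the remaining entries via Cauchy--Schwarz, the chi-square law of large numbers, and the $\alpha>1$ scaling, exactly as in Lemma~\ref{lem2}. Your closing verification that $\|c_{1}\|_{2}^{2}/d^{\alpha}$ actually converges to a non-degenerate limit is a small addition the paper leaves implicit, but it does not change the argument.
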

\begin{proof}
Let $\mathbf{M}_{X}^{(d)}$ be,
\begin{align*} 
\mathbf{M}_{X}^{(d)}=
\begin{bmatrix}
\frac{\| c_{1} \|_{2}^{2}}{d^{\alpha}} &\underset{1 \times (d-1)}{\mathbf{0}}\\
\underset{(d-1) \times 1}{\mathbf{0}} &\underset{(d-1) \times (d-1)}{\mathbf{0}}\\
\end{bmatrix}.
\end{align*}
It is obvious to see that,
\begin{align*}
\left\| \mathbf{M}_{X}^{(d)}-\mathbf{M}_{X} \right\|_{F}^{2} \mathop{\longrightarrow}_{d \to \infty}^{p} 0.
\end{align*}
Using the Cauchy-Schwarz inequality,
\begin{align*}
\left\| \frac{n\hat{\mathbf{\Sigma}}^{(d)}_{X}}{d^{\alpha}}-\mathbf{M}_{X}^{(d)} \right\|_{F}^{2} 
=&\frac{\langle c_{1}, c_{2} \rangle^{2}}{d^{2\alpha}}
+\frac{\langle c_{2}, c_{1} \rangle^{2}}{d^{2\alpha}}
+\frac{\langle c_{2}, c_{2} \rangle^{2}}{d^{2\alpha}}
+2\tau_{X}^{2}\sum_{i=3}^{d} \frac{\langle c_{1}, z_{Xi\bullet} \rangle^{2}}{d^{2\alpha}}\\
&+2\tau_{X}^{2}\sum_{i=3}^{d} \frac{\langle c_{2}, z_{Xi\bullet} \rangle^{2}}{d^{2\alpha}}
+\tau_{X}^{4}\sum_{i=3}^{d}\sum_{j=3}^{d} \frac{\langle z_{Xi\bullet}, z_{Xj\bullet} \rangle^{2}}{d^{2\alpha}}\\
\le &\frac{\langle c_{1}, c_{2} \rangle^{2}}{d^{2\alpha}}
+\frac{\langle c_{2}, c_{1} \rangle^{2}}{d^{2\alpha}}
+\frac{\langle c_{2}, c_{2} \rangle^{2}}{d^{2\alpha}}
+2\tau_{X}^{2}\sum_{i=3}^{d} \frac{\| c_{1} \|_{2}^{2} \| z_{Xi\bullet} \|_{2}^{2}}{d^{2\alpha}}\\
&+2\tau_{X}^{2}\sum_{i=3}^{d} \frac{\| c_{2} \|_{2}^{2} \| z_{Xi\bullet} \|_{2}^{2}}{d^{2\alpha}}
+\tau_{X}^{4}\sum_{i=3}^{d}\sum_{j=3}^{d} \frac{\| z_{Xi\bullet}\|_{2}^{2} \| z_{Xj\bullet} \|_{2}^{2}}{d^{2\alpha}}.
\end{align*}
Since $\langle c_{1}, c_{2} \rangle^{2}$, $\langle c_{2}, c_{1} \rangle^{2}$ are $O_{P}(\sqrt{d^{\alpha}})$, and $\langle c_{2}, c_{2} \rangle^{2}$ is $O_{P}(1)$,
\begin{align*}
\frac{\langle c_{1}, c_{2} \rangle^{2}}{d^{2\alpha}} \mathop{\longrightarrow}_{d \to \infty}^{p} 0, \,\, \frac{\langle c_{2}, c_{1} \rangle^{2}}{d^{2\alpha}} \mathop{\longrightarrow}_{d \to \infty}^{p} 0, \,\, \frac{\langle c_{2}, c_{2} \rangle^{2}}{d^{2\alpha}} \mathop{\longrightarrow}_{d \to \infty}^{p} 0.
\end{align*}
Note that $\| z_{Xi\bullet} \|_{2}^{2} \sim \chi_{n}^{2}$ and that $\| c_{1} \|_{2}^{2}$ and $\| c_{2} \|_{2}^{2}$ are of $O_{P}(d^{\alpha})$. By the law of large numbers,
\begin{align*}
2\tau_{X}^{2}\sum_{i=3}^{d} \frac{\| c_{1} \|_{2}^{2} \| z_{Xi\bullet} \|_{2}^{2}}{d^{2\alpha}}
&=\frac{\tau_{X}^{2}}{d^{\alpha-1}} \left \| \frac{d_{1}}{\sqrt{d^{\alpha}}} \right \|_{2}^{2} \sum_{i=3}^{d} \frac{\| z_{Xi\bullet} \|_{2}^{2}}{d}\\
&\mathop{\longrightarrow}_{d \to \infty}^{p} 0 \times O_{P}(1) \times E(\chi_{n}^{2}) = 0,\\
2\tau_{X}^{2}\sum_{i=3}^{d} \frac{\| c_{2} \|_{2}^{2} \| z_{Xi\bullet} \|_{2}^{2}}{d^{2\alpha}}
&=\frac{\tau_{X}^{2}}{d^{\alpha-1}} \left \| \frac{d_{2}}{\sqrt{d^{\alpha}}} \right \|_{2}^{2} \sum_{i=3}^{d} \frac{\| z_{Xi\bullet} \|_{2}^{2}}{d}\\
&\mathop{\longrightarrow}_{d \to \infty}^{p} 0 \times 0 \times E(\chi_{n}^{2}) = 0.
\end{align*}
Using the fact that $\alpha > 1$, $\| z_{Xi\bullet} \|_{2}^{2} \sim \chi_{n}^{2}$ and $\| z_{Yi\bullet} \|_{2}^{2} \sim \chi_{n}^{2}$  and applying the law of large numbers,
\begin{align*}
\tau_{X}^{4}\sum_{i=3}^{d}\sum_{j=3}^{d} \frac{\| z_{Xi\bullet}\|_{2}^{2} \| z_{Xj\bullet} \|_{2}^{2}}{d^{2\alpha}}&=\frac{\tau_{X}^{4}}{d^{2\alpha-2}} \sum_{i=3}^{d} \frac{\| z_{Xi\bullet} \|_{2}^{2}}{d} \sum_{j=3}^{d} \frac{\| z_{Xj\bullet} \|_{2}^{2}}{d}\\
&\mathop{\longrightarrow}_{d \to \infty}^{p} 0 \times n \times n = 0.
\end{align*}
Therefore,
\begin{align*}
\left\| \frac{n\hat{\mathbf{\Sigma}}^{(d)}_{X}}{d^{\alpha}}-\mathbf{M}_{X}^{(d)} \right\|_{F}^{2} \mathop{\longrightarrow}_{d \to \infty}^{p} 0.
\end{align*}
\end{proof}
\begin{lem}[CCA HDLSS Asymptotic lemma 8.]
\label{lem8}
Let $\mathbf{M}_{X}$ be the matrix defined in Lemma~\ref{lem7}. Let $\hat{\lambda}_{X1}^{(d)}$ and $\hat{\xi}_{X1}^{(d)}$ be the first sample eigenvalue and eigenvectors from~(\ref{sec3:equ21}). Then, for $\alpha > 1$, 
\begin{align*}
\left\| \frac{n\hat{\lambda}_{X1}^{(d)}}{d^{\alpha}} \hat{\xi}_{X1}^{(d)} \left( \hat{\xi}_{X1}^{(d)} \right)^{T}-\mathbf{M}_{X} \right\|_{F}^{2} \mathop{\longrightarrow}_{d \to \infty}^{p} 0.
\end{align*}
\end{lem}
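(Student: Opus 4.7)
The plan is to mirror the proof of Lemma~\ref{lem3}, since $\hat{\mathbf{\Sigma}}^{(d)}_{X}$ is symmetric positive semidefinite, and for such matrices the eigendecomposition coincides with the SVD (up to sign), so the Eckart--Young best rank-$k$ approximation property applies in the same way. First I would introduce the finite-$d$ proxy
\begin{align*}
\mathbf{M}_{X}^{(d)}=
\begin{bmatrix}
\frac{\| c_{1}\|_{2}^{2}}{d^{\alpha}} & \underset{1\times(d-1)}{\mathbf{0}}\\
\underset{(d-1)\times 1}{\mathbf{0}} & \underset{(d-1)\times(d-1)}{\mathbf{0}}
\end{bmatrix},
\end{align*}
and note the elementary observation $\| \mathbf{M}_{X}^{(d)} - \mathbf{M}_{X} \|_{F}^{2} \mathop{\longrightarrow}_{d\to\infty}^{p} 0$ from the definition of $C_{X}$ in Lemma~\ref{lem7}. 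By the triangle inequality,
\begin{align*}
\left\| \tfrac{n\hat{\lambda}_{X1}^{(d)}}{d^{\alpha}} \hat{\xi}_{X1}^{(d)} \bigl( \hat{\xi}_{X1}^{(d)} \bigr)^{T}-\mathbf{M}_{X}^{(d)} \right\|_{F}
\le
\left\| \tfrac{n\hat{\mathbf{\Sigma}}^{(d)}_{X}}{d^{\alpha}} - \mathbf{M}_{X}^{(d)} \right\|_{F}
+ \left\| \tfrac{n\hat{\mathbf{\Sigma}}^{(d)}_{X}}{d^{\alpha}} - \tfrac{n\hat{\lambda}_{X1}^{(d)}}{d^{\alpha}} \hat{\xi}_{X1}^{(d)} \bigl( \hat{\xi}_{X1}^{(d)} \bigr)^{T} \right\|_{F},
\end{align*}
so it suffices to drive both summands on the right to $0$ in probability.

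The first summand is exactly the quantity handled by Lemma~\ref{lem7} and therefore vanishes. For the second summand I would use the fact that $\mathbf{M}_{X}^{(d)}$ is a symmetric, positive semidefinite matrix of rank one, writable as $(\| c_{1}\|_{2}^{2}/d^{\alpha})\, e_{1}^{(d)} (e_{1}^{(d)})^{T}$. Because $\hat{\mathbf{\Sigma}}^{(d)}_{X}$ is symmetric positive semidefinite, its top eigenpair $(\hat{\lambda}_{X1}^{(d)},\hat{\xi}_{X1}^{(d)})$ coincides with its top singular triple $(\hat{\lambda}_{X1}^{(d)}, \hat{\xi}_{X1}^{(d)}, \hat{\xi}_{X1}^{(d)})$, and the Eckart--Young theorem says that $(\hat{\lambda}_{X1}^{(d)}/d^{\alpha})\, \hat{\xi}_{X1}^{(d)}(\hat{\xi}_{X1}^{(d)})^{T}$ is the optimal rank-$1$ Frobenius-norm approximation to $\hat{\mathbf{\Sigma}}^{(d)}_{X}/d^{\alpha}$. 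Hence
\begin{align*}
\left\| \tfrac{n\hat{\mathbf{\Sigma}}^{(d)}_{X}}{d^{\alpha}} - \tfrac{n\hat{\lambda}_{X1}^{(d)}}{d^{\alpha}} \hat{\xi}_{X1}^{(d)} \bigl( \hat{\xi}_{X1}^{(d)} \bigr)^{T} \right\|_{F}^{2}
\le \left\| \tfrac{n\hat{\mathbf{\Sigma}}^{(d)}_{X}}{d^{\alpha}} - \mathbf{M}_{X}^{(d)} \right\|_{F}^{2} \mathop{\longrightarrow}_{d\to\infty}^{p} 0,
\end{align*}
again by Lemma~\ref{lem7}. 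Combining both pieces and invoking once more $\| \mathbf{M}_{X}^{(d)} - \mathbf{M}_{X}\|_{F}^{2} \mathop{\longrightarrow}_{d\to\infty}^{p} 0$ via one more triangle step yields the claim.

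There is essentially no obstacle: the argument is a direct transcription of Lemma~\ref{lem3} with $\hat{\mathbf{\Sigma}}^{(d)}_{XY}$ replaced by $\hat{\mathbf{\Sigma}}^{(d)}_{X}$ and the SVD replaced by the eigendecomposition. The only point that deserves a line of justification is the Eckart--Young step, where I would explicitly note that for a symmetric positive semidefinite matrix the optimal rank-$1$ Frobenius approximation is delivered by the leading eigenpair (and not, say, by a signed eigenpair), since all eigenvalues are nonnegative and the top eigenvalue equals the top singular value.
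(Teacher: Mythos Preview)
Your proposal is correct and follows essentially the same approach as the paper: introduce the proxy $\mathbf{M}_{X}^{(d)}$, split via the triangle inequality, kill the first term with Lemma~\ref{lem7}, and bound the second term by the first via the Eckart--Young best rank-$1$ approximation property. The only addition is your explicit remark that for symmetric positive semidefinite matrices the top eigenpair coincides with the top singular triple, which the paper leaves implicit.
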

\begin{proof}
Using $\mathbf{M}_{X}^{(d)}$ defined in Lemma~\ref{lem7} and the triangle inequality,
\begin{align*}
&\left \| \frac{n\hat{\lambda}_{X1}^{(d)}}{d^{\alpha}} \hat{\xi}_{X1}^{(d)} \left( \hat{\xi}_{X1}^{(d)} \right)^{T} - \mathbf{M}_{X}^{(d)} \right \|_{F}\\
&\quad \quad =\left \| \left( \frac{n\hat{\mathbf{\Sigma}}^{(d)}_{X}}{d^{\alpha}} - \mathbf{M}_{X}^{(d)} \right) - \left( \frac{n\hat{\mathbf{\Sigma}}^{(d)}_{X}}{d^{\alpha}} - \frac{n\hat{\lambda}_{X1}^{(d)}}{d^{\alpha}} \hat{\xi}_{X1}^{(d)} \left( \hat{\xi}_{X1}^{(d)} \right)^{T} \right) \right \|_{F}\\
&\quad \quad \le \left \| \frac{n\hat{\mathbf{\Sigma}}^{(d)}_{X}}{d^{\alpha}} - \mathbf{M}_{X}^{(d)} \right \|_{F} + \left \| \frac{n\hat{\mathbf{\Sigma}}^{(d)}_{X}}{d^{\alpha}} - \frac{\hat{n\lambda}_{X1}^{(d)}}{d^{\alpha}} \hat{\xi}_{X1}^{(d)} \left( \hat{\xi}_{X1}^{(d)} \right)^{T} \right \|_{F}.
\end{align*}
By Lemma~\ref{lem7},
\begin{align*}
\left \| \frac{n\hat{\mathbf{\Sigma}}^{(d)}_{X}}{d^{\alpha}} - \mathbf{M}_{X}^{(d)} \right \|_{F}^{2} \mathop{\longrightarrow}_{d \to \infty}^{p} 0.
\end{align*}
Write $\mathbf{M}_{X}^{(d)}$ as,
\begin{align*}
\mathbf{M}_{X}^{(d)} = \frac{\| c_{1} \|^{2}_{2}}{d^{\alpha}} e_{1}^{(d)} \left( e_{1}^{(d)} \right)^{T}.
\end{align*}
Since the first eigenvalue $\hat{\lambda}_{X1}^{(d)}$ and eigenvector $\hat{\xi}_{X1}^{(d)}$ provide the best rank-1 approximation to the matrix $\hat{\lambda}_{X1}^{(d)}/d^{\alpha}$,
\begin{align*}
\left\| \frac{\hat{\lambda}_{X1}^{(d)}}{d^{\alpha}} - \frac{\hat{\lambda}_{X1}^{(d)}}{d^{\alpha}} \hat{\xi}_{X1}^{(d)} \left( \hat{\xi}_{X1}^{(d)} \right)^{T} \right\|_{F}^{2} 
&\le \left \| \frac{\hat{\mathbf{\Sigma}}^{(d)}_{X}}{d^{\alpha}} - \frac{\| c_{1} \|^{2}_{2}}{nd^{\alpha}} e_{1}^{(d)} \left( e_{1}^{(d)} \right)^{T} \right \|_{F}^{2}\\
&\le \left \| \frac{\hat{\mathbf{\Sigma}}^{(d)}_{X}}{d^{\alpha}} - \frac{\mathbf{M}_{X}^{(d)}}{n} \right \|_{F}^{2}\\ 
&\mathop{\longrightarrow}_{d \to \infty}^{p} 0.
\end{align*}
\end{proof}
\begin{lem}[CCA HDLSS Asymptotic lemma 9.]
\label{lem9}
For $\alpha > 1$, the first eigenvalue $\hat{\lambda}_{X1}^{(d)}$ and eigenvectors $\hat{\xi}_{X1}^{(d)}$ converge in probability to the following quantities as $d \to \infty$,
\begin{align*}
\frac{n\hat{\lambda}_{X1}^{(d)}}{d^{\alpha}} \mathop{\longrightarrow}_{d \to \infty}^{p} C_{X}, \,\, \left\| \hat{\xi}_{X1}^{(d)}-e_{1}^{(d)} \right\|_{2} \mathop{\longrightarrow}_{d \to \infty}^{p} 0,
\end{align*}
where $C_{X}$ is defined in Lemma~\ref{lem7}.
\end{lem}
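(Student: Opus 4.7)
The plan is to mirror the strategy of Lemma~\ref{lem4}, specialized to the symmetric (eigen-)decomposition where a single vector $\hat{\xi}_{X1}^{(d)}$ replaces the pair $(\hat{\eta}_{X1}^{(d)},\hat{\eta}_{Y1}^{(d)})$ of SVD vectors; the reduction to a single vector actually simplifies the bookkeeping. Starting from Lemma~\ref{lem8}, the rank-$1$ matrix $(n\hat{\lambda}_{X1}^{(d)}/d^{\alpha})\hat{\xi}_{X1}^{(d)}(\hat{\xi}_{X1}^{(d)})^T$ approaches $\mathbf{M}_{X}^{(d)} = (\|c_{1}\|_{2}^{2}/d^{\alpha})\, e_{1}^{(d)}(e_{1}^{(d)})^T$ in squared Frobenius norm. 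Writing $\tilde{\xi}_{X1}^{(d)}$ for the vector obtained by zeroing the first entry of $\hat{\xi}_{X1}^{(d)}$, the block of the approximating matrix indexed by $\{2,\dots,d\}\times\{2,\dots,d\}$ is $(n\hat{\lambda}_{X1}^{(d)}/d^{\alpha})\tilde{\xi}_{X1}^{(d)}(\tilde{\xi}_{X1}^{(d)})^T$, whose squared Frobenius norm is $(n\hat{\lambda}_{X1}^{(d)}/d^{\alpha})^{2}\|\tilde{\xi}_{X1}^{(d)}\|_{2}^{4}$; since the corresponding block of $\mathbf{M}_{X}$ vanishes, this quantity must tend to $0$ in probability.

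Next, I rule out degeneracy of the eigenvalue. If $n\hat{\lambda}_{X1}^{(d)}/d^{\alpha} \to 0$ in probability, then using $\|\hat{\xi}_{X1}^{(d)}\|_{2}=1$ the Frobenius norm of the whole rank-$1$ matrix equals $n\hat{\lambda}_{X1}^{(d)}/d^{\alpha}$ and would vanish, contradicting the nondegenerate limit $\mathbf{M}_{X}$. Hence $n\hat{\lambda}_{X1}^{(d)}/d^{\alpha}$ is bounded away from $0$, forcing $\|\tilde{\xi}_{X1}^{(d)}\|_{2}^{2}\to 0$ in probability. Combined with $\|\hat{\xi}_{X1}^{(d)}\|_{2}=1$ this gives $(\hat{\xi}_{X1}^{(d)}(1))^{2}\to 1$, and under the standard sign convention $\hat{\xi}_{X1}^{(d)}(1)\ge 0$ we obtain $\|\hat{\xi}_{X1}^{(d)}-e_{1}^{(d)}\|_{2}\to 0$ in probability.

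Finally, for the value of the eigenvalue limit, I exploit the elementary bound $|u^{T}Mv|\le \|u\|_{2}\|v\|_{2}\|M\|_{F}$ applied with $u=v=\hat{\xi}_{X1}^{(d)}$ to the matrix $M=(n\hat{\lambda}_{X1}^{(d)}/d^{\alpha})\hat{\xi}_{X1}^{(d)}(\hat{\xi}_{X1}^{(d)})^T - \mathbf{M}_{X}^{(d)}$. Since $\|M\|_{F}\to 0$ by Lemma~\ref{lem8}, contracting yields
\begin{align*}
\left| \frac{n\hat{\lambda}_{X1}^{(d)}}{d^{\alpha}} - \frac{\|c_{1}\|_{2}^{2}}{d^{\alpha}}\bigl(\hat{\xi}_{X1}^{(d)}(1)\bigr)^{2} \right| \mathop{\longrightarrow}_{d \to \infty}^{p} 0.
\end{align*}
Combined with $(\hat{\xi}_{X1}^{(d)}(1))^{2}\to 1$ from the previous step and $\|c_{1}\|_{2}^{2}/d^{\alpha}\to C_{X}$ by definition of $C_{X}$ in Lemma~\ref{lem7}, Slutsky then gives $n\hat{\lambda}_{X1}^{(d)}/d^{\alpha}\to C_{X}$ in probability.

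The main obstacle here is essentially notational rather than analytic: the one point requiring a little care is the sign convention for the eigenvector, which has no analogue in the coupled left/right singular-vector setting of Lemma~\ref{lem4} but is easily absorbed into the statement. Everything else is a direct symmetric specialization of that earlier proof, and no new randomness estimates are needed beyond what Lemmas~\ref{lem7} and~\ref{lem8} already provide.
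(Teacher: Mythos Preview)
Your proof is correct and follows essentially the same route as the paper's: the same block argument via $\tilde{\xi}_{X1}^{(d)}$, the same contradiction to rule out $n\hat{\lambda}_{X1}^{(d)}/d^{\alpha}\to 0$, and the same contraction of the rank-one difference against $\hat{\xi}_{X1}^{(d)}$ on both sides to pin down the eigenvalue limit. Your explicit treatment of the sign convention and the clean bilinear bound $|u^{T}Mv|\le\|M\|_{F}$ are, if anything, slight improvements over the paper's loosely phrased ``unitary invariance'' step, which is really the same inequality in disguise.
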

\begin{proof}
Let $\tilde{\xi}_{X1}^{(d)}$ be $\hat{\xi}_{X1}^{(d)}$ where their first entries are set to 0. Let $\hat{\xi}_{X1}^{(d)}(i)$ be the $i$th and $j$th entries of $\hat{\xi}_{X1}^{(d)}$. By Lemma~\ref{lem8},
\begin{align*}
\left( \frac{\hat{\lambda}_{X1}^{(d)}}{d^{\alpha}} \right)^{2} \left\| \tilde{\xi}_{X1}^{(d)} \right\|_{2}^{2} \left\| \tilde{\xi}_{X1}^{(d)} \right\|_{2}^{2}
&=\left( \frac{\hat{\lambda}_{X1}^{(d)}}{d^{\alpha}} \right)^{2} \sum_{i=2}^{d} \left( \hat{\xi}_{X1}^{(d)}(i) \right)^{2} \sum_{j=2}^{d} \left( \hat{\xi}_{X1}^{(d)}(j) \right)^{2}\\
&=\left( \frac{\hat{\lambda}_{X1}^{(d)}}{d^{\alpha}} \right)^{2} \sum_{i=2}^{d} \sum_{j=2}^{d}  \left( \hat{\xi}_{X1}^{(d)}(i) \right)^{2} \left( \hat{\xi}_{X1}^{(d)}(j) \right)^{2}\\
&=\left\| \frac{\hat{\lambda}_{X1}^{(d)}}{d^{\alpha}} \tilde{\xi}_{X1}^{(d)} \left( \tilde{\xi}_{X1}^{(d)} \right)^{T} \right\|_{F}^{2}\\ 
&\mathop{\longrightarrow}_{d \to \infty}^{p} 0.
\end{align*}
First, we show that $(\hat{\lambda}_{X1}^{(d)}/d^{\alpha})^{2} > 0$ in probability. Suppose that $(\hat{\lambda}_{X1}^{(d)}/d^{\alpha})^{2}$ converges in probability to 0. Then, noting that $\| \hat{\xi}_{X1}^{(d)} \|_{2}^{2} = 1$ and $\| \hat{\xi}_{X1}^{(d)} \|_{2}^{2} = 1$,
\begin{align*}
\left\| \frac{\hat{\lambda}_{X1}^{(d)}}{d^{\alpha}} \hat{\xi}_{X1}^{(d)} \left( \hat{\xi}_{X1}^{(d)} \right)^{T} \right\|_{F}^{2} &= \left( \frac{\hat{\lambda}_{X1}^{(d)}}{d^{\alpha}} \right)^{2} \sum_{i=1}^{d} \sum_{j=1}^{d}  \left( \hat{\xi}_{X1}^{(d)}(i) \right)^{2} \left( \hat{\xi}_{X1}^{(d)}(j) \right)^{2}\\
&=\left( \frac{\hat{\lambda}_{X1}^{(d)}}{d^{\alpha}} \right)^{2} \sum_{i=1}^{d} \left( \hat{\xi}_{X1}^{(d)} \right)^{2} \sum_{j=1}^{d} \left( \hat{\xi}_{X1}^{(d)}(j) \right)^{2}\\
&=\left( \frac{\hat{\lambda}_{X1}^{(d)}}{d^{\alpha}} \right)^{2} \left\| \hat{\xi}_{X1}^{(d)} \right\|_{2}^{2} \left\| \hat{\xi}_{X1}^{(d)} \right\|_{2}^{2}\\
&=\left( \frac{\hat{\lambda}_{X1}^{(d)}}{d^{\alpha}} \right)^{2}\\ 
&\mathop{\longrightarrow}_{d \to \infty}^{p} 0,
\end{align*}
which contradicts to Lemma~\ref{lem7} (note that the limiting matrix $M$ is not a degenerate matrix). Therefore, 
\begin{align*}
\left\| \tilde{\xi}_{X1}^{(d)} \right\|_{2}^{2} \mathop{\longrightarrow}_{d \to \infty}^{p} 0.
\end{align*}
Note that, since the norm of $\hat{\xi}_{X1}^{(d)}$ is 1,
\begin{align*}
\left( \hat{\xi}_{X1}^{(d)}(1) \right)^{2} = 1 - \left\| \tilde{\xi}_{X1}^{(d)} \right\|_{2}^{2} \mathop{\longrightarrow}_{d \to \infty}^{p} 1.
\end{align*}
Therefore
\begin{align*}
\left\| \hat{\xi}_{X1}^{(d)} - e_{1}^{(d)} \right\|_{2}^{2} = \left( \hat{\xi}_{X1}^{(d)}(1) - 1 \right)^{2} + \left\| \tilde{\xi}_{X1}^{(d)} \right\|_{2}^{2} \mathop{\longrightarrow}_{d \to \infty}^{p} 0.
\end{align*}
To find the limiting value of $\hat{\lambda}_{X1}^{(d)}/d^{\alpha}$ as $d \to \infty$, using the unitary invariance property of the Frobenius norm and the previous result about the limiting vectors of $\hat{\xi}_{X1}^{(d)}$ as $d \to \infty$,
\begin{align*}
&\left\| \frac{n\hat{\lambda}_{X1}^{(d)}}{d^{\alpha}} \hat{\xi}_{X1}^{(d)} \left( \hat{\xi}_{X1}^{(d)} \right)^{T} - \mathbf{M}_{X}^{(d)} \right\|_{F}^{2}\\ 
&= \left\| \frac{n\hat{\lambda}_{X1}^{(d)}}{d^{\alpha}} \hat{\xi}_{X1}^{(d)} \left( \hat{\xi}_{X1}^{(d)} \right)^{T} - \frac{\| c_{1} \|_{2}^{2}}{d^{\alpha}} e_{1}^{(d)} \left( e_{1}^{(d)} \right)^{T} \right\|_{F}^{2}\\
&=\left\| \frac{n\hat{\lambda}_{X1}^{(d)}}{d^{\alpha}} \left( \hat{\xi}_{X1}^{(d)} \right)^{T} \hat{\xi}_{X1}^{(d)} \left( \hat{\xi}_{X1}^{(d)} \right)^{T} \hat{\xi}_{X1}^{(d)} - \frac{\| c_{1} \|_{2}^{2}}{d^{\alpha}} \left( \hat{\xi}_{X1}^{(d)} \right)^{T} e_{1}^{(d)} \left( e_{1}^{(d)} \right)^{T} \hat{\xi}_{X1}^{(d)} \right\|_{F}^{2}\\
&=\left\| \frac{n\hat{\lambda}_{X1}^{(d)}}{d^{\alpha}} - \frac{\| c_{1} \|_{2}^{2}}{d^{\alpha}} \left( \hat{\xi}_{X1}^{(d)} \right)^{T} e_{1}^{(d)} \left( e_{1}^{(d)} \right)^{T} \hat{\xi}_{X1}^{(d)} \right\|_{F}^{2}\\
&=\left( \frac{\| c_{1} \|_{2}^{2}}{d^{\alpha}} \right)^{2} \left\| \frac{n\hat{\lambda}_{X1}^{(d)}/d^{\alpha}}{\| c_{1} \|_{2}^{2}/d^{\alpha}} - \left( \hat{\xi}_{X1}^{(d)} \right)^{T} e_{1}^{(d)} \left( e_{1}^{(d)} \right)^{T} \hat{\xi}_{X1}^{(d)} \right\|_{F}^{2}\\
&\mathop{\longrightarrow}_{d \to \infty}^{p} \text{A}^{2} \left( \frac{\text{B}}{\text{C}} - 1 \right)^{2},
\end{align*}
where,
\begin{align*}
\text{A}=\lim_{d \to \infty} \frac{\| c_{1} \|_{2}^{2}}{d^{\alpha}}, \,\, \text{B}=\lim_{d \to \infty} \frac{n\hat{\lambda}_{X1}^{(d)}}{d^{\alpha}},\,\, \text{C}=\lim_{d \to \infty} \frac{\| c_{1} \|_{2}^{2}}{d^{\alpha}}.
\end{align*}
By Lemma~\ref{lem7},
\begin{align*}
\left \| \frac{n\hat{\lambda}_{X1}^{(d)}}{d^{\alpha}} \hat{\xi}_{X1}^{(d)} \left(\hat{\xi}_{X1}^{(d)}\right)^{T} - \mathbf{M}_{X}^{(d)} \right \|_{F}^{2} \mathop{\longrightarrow}_{d \to \infty}^{p} 0,
\end{align*}
and the fact that $\text{A} \asymp O_{p}(1)$,
\begin{align*}
\left( \frac{\text{B}}{\text{C}} - 1 \right)^{2} \mathop{\longrightarrow}_{d \to \infty}^{p} 0.
\end{align*}
Therefore,
\begin{align*}
\frac{\text{B}}{\text{C}} \mathop{\longrightarrow}_{d \to \infty}^{p} 1.
\end{align*}
\end{proof}
\begin{lem}[CCA HDLSS Asymptotic lemma 10.]
\label{lem10}
The sample eigenvalue $\hat{\lambda}_{Xi}^{(d)}$ and eigenvector $\hat{\xi}_{Xi}^{(d)}$, $i=2,3,..,d$, converge in probability to the following quantities as $d \to \infty$,
\begin{align*}
\frac{n\hat{\lambda}_{Xi}^{(d)}}{d} \mathop{\longrightarrow}_{d \to \infty}^{P} \tau_{X}^{2}, \,\, \langle \hat{\xi}_{Xi}^{(d)}, \xi^{(d)} \rangle \mathop{\longrightarrow}_{d \to \infty}^{P} 0, \,\, i=2,3,\dots,n,
\end{align*}
where $\xi^{(d)}$ is an any given vector in $R^{d}$. 
\end{lem}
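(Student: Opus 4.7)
The plan is to pass to the $n \times n$ Gram dual $(\mathbf{X}^{(d)})^T\mathbf{X}^{(d)}$, which has the same nonzero eigenvalues as $n\hat{\mathbf{\Sigma}}_X^{(d)} = \mathbf{X}^{(d)}(\mathbf{X}^{(d)})^T$ but admits a clean spike-plus-identity decomposition in fixed dimension $n$. From the row-wise representation in~(\ref{sec3:equ17}),
\begin{align*}
\frac{(\mathbf{X}^{(d)})^T\mathbf{X}^{(d)}}{d} = \frac{c_1^T c_1}{d} + \frac{c_2^T c_2}{d} + \tau_X^2 \sum_{k=3}^d \frac{z_{Xk\bullet}^T z_{Xk\bullet}}{d}.
\end{align*}
First I would show that the operator-norm contribution of $c_2^T c_2/d$ is $\|c_2\|_2^2/d = O_P(1/d)$, and that $\tau_X^2 \sum_{k=3}^d z_{Xk\bullet}^T z_{Xk\bullet}/d \to \tau_X^2 \mathbf{I}_n$ in operator norm in probability. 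The latter follows from an entrywise law of large numbers: the diagonal entries $\sum_{k=3}^d z_{Xki}^2/d$ each converge to $1$, while the off-diagonal entries $\sum_{k=3}^d z_{Xki} z_{Xkj}/d$ for $i \ne j$ are averages of $d-2$ independent mean-zero unit-variance terms and so converge to $0$; because $n$ is fixed, entrywise convergence upgrades to operator-norm convergence. Assembling, $(\mathbf{X}^{(d)})^T\mathbf{X}^{(d)}/d = c_1^T c_1/d + \tau_X^2 \mathbf{I}_n + o_P(1)$ in operator norm.

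By Weyl's inequality the eigenvalues of the left-hand side track those of the rank-one perturbation $c_1^T c_1/d + \tau_X^2 \mathbf{I}_n$, whose spectrum consists of $\|c_1\|_2^2/d + \tau_X^2 \asymp d^{\alpha-1}$ (simple) and $\tau_X^2$ (multiplicity $n-1$). The large eigenvalue matches $n\hat\lambda_{X1}^{(d)}/d$ (consistent with Lemma~\ref{lem9}), and the remaining $n-1$ eigenvalues give $n\hat\lambda_{Xi}^{(d)}/d \mathop{\longrightarrow}_{d \to \infty}^{P} \tau_X^2$ for $i=2,\dots,n$, establishing the first claim of the lemma.

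For the eigenvector claim, decompose a bounded-norm $\xi^{(d)} \in R^d$ as $\xi^{(d)} = \xi_1^{(d)} e_1^{(d)} + \xi^{\prime(d)}$ with $\xi^{\prime(d)} \perp e_1^{(d)}$. The first-coordinate part vanishes by the same orthogonality trick that produced~(\ref{sec3:equ35}): Lemma~\ref{lem9} gives $\hat\xi_{X1}^{(d)}(1) \to 1$ and $\|\tilde\xi_{X1}^{(d)}\|_2 \to 0$, and combining this with $\langle \hat\xi_{X1}^{(d)}, \hat\xi_{Xi}^{(d)} \rangle = 0$ forces $\hat\xi_{Xi}^{(d)}(1) \to 0$ for $i \ge 2$. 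For the orthogonal component, writing $\hat\xi_{Xi}^{(d)} = \mathbf{X}^{(d)} \hat v_i / \sigma_i$ for the $i$th right singular vector $\hat v_i$ and singular value $\sigma_i$ (with $\sigma_i^2 = n\hat\lambda_{Xi}^{(d)}$) gives
\begin{align*}
\bigl| \langle \hat\xi_{Xi}^{(d)}, \xi^{\prime(d)} \rangle \bigr|^2 \le \frac{\| (\mathbf{X}^{(d)})^T \xi^{\prime(d)} \|_2^2}{\sigma_i^2}.
\end{align*}
Since $\xi^{\prime(d)}_1 = 0$, the numerator is $\| \xi^{\prime(d)}_2 c_2 + \tau_X \sum_{k \ge 3} \xi^{\prime(d)}_k z_{Xk\bullet} \|_2^2$, whose expectation is bounded by $O(n) \|\xi^{\prime(d)}\|_2^2 = O(1)$ for fixed $n$, while $\sigma_i^2 \asymp d$ by the first part. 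Hence $| \langle \hat\xi_{Xi}^{(d)}, \xi^{\prime(d)} \rangle |^2 = O_P(1/d) \to 0$.

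The main technical obstacle is that the $n-1$ bulk sample eigenvalues coalesce asymptotically at the common value $\tau_X^2 d/n$, so the individual sample eigenvectors $\hat\xi_{Xi}^{(d)}$ for $i \ge 2$ are not uniquely determined in the limit---only the total eigenprojector onto the $(n-1)$-dimensional noise subspace is. The Gram-dual bound above circumvents this nonuniqueness by producing a control that depends only on the (well-defined) singular value $\sigma_i$, and hence applies uniformly to any orthonormal basis chosen within the degenerate eigenspace.
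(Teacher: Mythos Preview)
Your argument is correct and considerably more explicit than what the paper actually does. The paper's own proof of this lemma is essentially a one-line citation: it observes that $X^{(d)}$ has the simple spiked covariance structure and then defers both claims to Theorem~\ref{thm2} (the Jung--Marron HDLSS PCA result from~\cite{38}), noting that the eigenvector statement for an arbitrary fixed $\xi^{(d)}$ is ``indicated in the proof'' of that cited theorem rather than derived here.

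Your route is genuinely different in that it is self-contained. Passing to the $n\times n$ Gram dual and writing it as a rank-one spike plus $\tau_X^2\mathbf{I}_n + o_P(1)$ is exactly the mechanism that drives the Jung--Marron result, but you carry it out explicitly rather than citing it. Weyl's inequality then gives the eigenvalue claim directly. For the eigenvector claim, your Cauchy--Schwarz bound $|\langle\hat\xi_{Xi}^{(d)},\xi^{\prime(d)}\rangle|^2 \le \|(\mathbf{X}^{(d)})^T\xi^{\prime(d)}\|_2^2/\sigma_i^2$ is a cleaner device than anything the paper writes down for this lemma, and your closing remark about the asymptotic degeneracy of the bulk eigenvalues is a point the paper glosses over entirely. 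The one implicit assumption you should state is that $\xi^{(d)}$ has bounded (e.g.\ unit) norm, which the lemma's phrasing ``any given vector'' leaves ambiguous but which is clearly intended given how the result is used downstream as a cosine.
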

\begin{proof}
Recall that the underlying random variable $X^{(d)}$ of the sample covariance matrix $\hat{\mathbf{\Sigma}}_{X}^{(d)}$ has a simple spiked covariance structure of~(\ref{sec3:equ8}). Then, the asymptotic behavior of a sample eigenvalue is a direct consequence of Theorem~\ref{thm1} for $\alpha > 1$. The result on the behavior of $\langle \hat{\xi}_{Xi}^{(d)}, \xi^{(d)} \rangle$ is indicated in the proof of Theorem~\ref{thm1} given in~\cite{38}, even though $\xi^{(d)}$ is fixed at the population counterpart of $\hat{\xi}_{Xi}^{(d)}$ in Theorem~\ref{thm1} for comparison purpose. Note that an inner product of two unit vectors measures a consine of the angle between the two. Lemma~\ref{lem10} implies that the sample eigenvector $\hat{\xi}_{Xi}^{(d)}$ becomes completely random such that the probability of it being consistent with any given vector is 0 as $d \to \infty$.
\end{proof}

\subsubsection{Behavior of the matrix \texorpdfstring{$\hat{\mathbf{R}}^{(d)}$}{}}

The definition of the matrix $\hat{\mathbf{R}}^{(d)}$ is given in~(\ref{sec2:equ3}) with the inverse matrix we are going to use explained in~(\ref{sec3:equ5}). The sample canonical correlation coefficients are found as the singular values of $\hat{\mathbf{R}}^{(d)}$ and sample canonical weight vectors are obtained via unscaling and normalizing the singular vectors of $\hat{\mathbf{R}}^{(d)}$ shown in~(\ref{sec2:equ4}).
\begin{align}
\label{sec3:equ22}
\begin{aligned}
\hat{\mathbf{R}}^{(d)} &= \left( \hat{\mathbf{\Sigma}}_{X}^{(d)} \right)^{-\frac{1}{2}} \hat{\mathbf{\Sigma}}_{XY}^{(d)} \left( \hat{\mathbf{\Sigma}}_{Y}^{(d)} \right)^{-\frac{1}{2}}\\
&= \sum_{i=1}^{n} \sqrt{\frac{1}{\hat{\lambda}_{Xi}^{(d)}}} \hat{\xi}_{Xi}^{(d)} \left( \hat{\xi}_{Xi}^{(d)} \right)^{T} \sum_{i=1}^{n} \hat{\lambda}_{XYi}^{(d)} \hat{\eta}_{Xi}^{(d)} \left( \hat{\eta}_{Yi}^{(d)} \right)^{T} \sum_{i=1}^{n} \sqrt{\frac{1}{\hat{\lambda}_{Yi}^{(d)}}} \hat{\xi}_{Yi}^{(d)} \left( \hat{\xi}_{Yi}^{(d)} \right)^{T}
\end{aligned}
\end{align}
Consider the two parts of the matrix $\hat{\mathbf{R}}^{(d)}$ in the following,
\begin{align*}
\begin{bmatrix}
\frac{1}{\sqrt{\hat{\lambda}_{X1}^{(d)}}} &0 &\ldots &0\\
0 &\frac{1}{\sqrt{\hat{\lambda}_{X2}^{(d)}}} &\ldots &0\\
\vdots &\vdots &\ddots &\vdots\\
0 &0 &\ldots &\frac{1}{\sqrt{\hat{\lambda}_{Xn}^{(d)}}}
\end{bmatrix}
\begin{bmatrix}
\left( \hat{\xi}_{X1}^{(d)} \right)^{T}\\
\left( \hat{\xi}_{X2}^{(d)} \right)^{T}\\
\vdots\\
\left( \hat{\xi}_{Xn}^{(d)} \right)^{T}
\end{bmatrix}
\begin{bmatrix}
\left( \hat{\eta}_{X1}^{(d)} \right)^{T}\\
\left( \hat{\eta}_{X2}^{(d)} \right)^{T}\\
\vdots\\
\left( \hat{\eta}_{Xn}^{(d)} \right)^{T}
\end{bmatrix}^{T}
\begin{bmatrix}
\sqrt{\hat{\lambda}_{XY1}^{(d)}} &0 &\ldots &0\\
0 &\sqrt{\hat{\lambda}_{XY2}^{(d)}} &\ldots &0\\
\vdots &\vdots &\ddots &\vdots\\
0 &0 &\ldots &\sqrt{\hat{\lambda}_{XYn}^{(d)}}
\end{bmatrix},\\[5pt]
\begin{bmatrix}
\sqrt{\hat{\lambda}_{XY1}^{(d)}} &0 &\ldots &0\\
0 &\sqrt{\hat{\lambda}_{XY2}^{(d)}} &\ldots &0\\
\vdots &\vdots &\ddots &\vdots\\
0 &0 &\ldots &\sqrt{\hat{\lambda}_{XYn}^{(d)}}
\end{bmatrix}
\begin{bmatrix}
\left( \hat{\eta}_{X1}^{(d)} \right)^{T}\\
\left( \hat{\eta}_{X2}^{(d)} \right)^{T}\\
\vdots\\
\left( \hat{\eta}_{Xn}^{(d)} \right)^{T}
\end{bmatrix}^{T}
\begin{bmatrix}
\left( \hat{\xi}_{Y1}^{(d)} \right)^{T}\\
\left( \hat{\xi}_{Y2}^{(d)} \right)^{T}\\
\vdots\\
\left( \hat{\xi}_{Yn}^{(d)} \right)^{T}
\end{bmatrix}
\begin{bmatrix}
\frac{1}{\sqrt{\hat{\lambda}_{Y1}^{(d)}}} &0 &\ldots &0\\
0 &\frac{1}{\sqrt{\hat{\lambda}_{Y2}^{(d)}}} &\ldots &0\\
\vdots &\vdots &\ddots &\vdots\\
0 &0 &\ldots &\frac{1}{\sqrt{\hat{\lambda}_{Yn}^{(d)}}}
\end{bmatrix}.
\end{align*}
The first part reduces to the following matrix (call it $\hat{\mathbf{R}}^{(d)}_{1}$),
\begin{align}
\label{sec3:equ23}
\hat{\mathbf{R}}^{(d)}_{1}=
\begin{bmatrix}
\frac{\sqrt{\hat{\lambda}_{XY1}^{(d)}}}{\sqrt{\hat{\lambda}_{X1}^{(d)}}} \langle \hat{\xi}_{X1}^{(d)}, \hat{\eta}_{X1}^{(d)} \rangle &\frac{\sqrt{\hat{\lambda}_{XY2}^{(d)}}}{\sqrt{\hat{\lambda}_{X1}^{(d)}}} \langle \hat{\xi}_{X1}^{(d)}, \hat{\eta}_{X2}^{(d)} \rangle &\ldots &\frac{\sqrt{\hat{\lambda}_{XYn}^{(d)}}}{\sqrt{\hat{\lambda}_{X1}^{(d)}}} \langle \hat{\xi}_{X1}^{(d)}, \hat{\eta}_{Xn}^{(d)} \rangle\\
\frac{\sqrt{\hat{\lambda}_{XY1}^{(d)}}}{\sqrt{\hat{\lambda}_{X2}^{(d)}}} \langle \hat{\xi}_{X2}^{(d)}, \hat{\eta}_{X1}^{(d)} \rangle &\frac{\sqrt{\hat{\lambda}_{XY2}^{(d)}}}{\sqrt{\hat{\lambda}_{X2}^{(d)}}} \langle \hat{\xi}_{X2}^{(d)}, \hat{\eta}_{X2}^{(d)} \rangle &\ldots &\frac{\sqrt{\hat{\lambda}_{XYn}^{(d)}}}{\sqrt{\hat{\lambda}_{X2}^{(d)}}} \langle \hat{\xi}_{X1}^{(d)}, \hat{\eta}_{Xn}^{(d)} \rangle\\
\vdots &\vdots &\ddots &\vdots\\
\frac{\sqrt{\hat{\lambda}_{XY1}^{(d)}}}{\sqrt{\hat{\lambda}_{Xn}^{(d)}}} \langle \hat{\xi}_{Xn}^{(d)}, \hat{\eta}_{X1}^{(d)} \rangle &\frac{\sqrt{\hat{\lambda}_{XY2}^{(d)}}}{\sqrt{\hat{\lambda}_{Xn}^{(d)}}} \langle \hat{\xi}_{Xn}^{(d)}, \hat{\eta}_{X2}^{(d)} \rangle &\ldots &\frac{\sqrt{\hat{\lambda}_{XYn}^{(d)}}}{\sqrt{\hat{\lambda}_{Xn}^{(d)}}} \langle \hat{\xi}_{X1}^{(d)}, \hat{\eta}_{Xn}^{(d)} \rangle
\end{bmatrix}.
\end{align}
We investigate which value each entry of $\hat{\mathbf{R}}^{(d)}_{1}(i,j)$ converges in probability to as $d \to \infty$. Referring to Lemma~\ref{lem4},~\ref{lem15},~\ref{lem9} and~\ref{lem10} on the magnitudes of the sample eigenvalues and singualer values, it can be easily noticed that,
\begin{align*}
&\frac{\sqrt{\hat{\lambda}_{XY1}^{(d)}}}{\sqrt{\hat{\lambda}_{X1}^{(d)}}}=\frac{\sqrt{\langle c_{1}, d_{1} \rangle/d^{\alpha}}}{\sqrt{\langle c_{1}, c_{1} \rangle/d^{\alpha}}} \asymp O_{P}(1),
 \,\, \frac{\sqrt{\hat{\lambda}_{XYi}^{(d)}}}{\sqrt{\hat{\lambda}_{Xj}^{(d)}}} = o_{P}(1), \,\, i,j=2,3,\dots,n, \\
&\frac{\sqrt{\hat{\lambda}_{XYi}^{(d)}}}{\sqrt{\hat{\lambda}_{X1}^{(d)}}} = O_{P}(1/\sqrt{d^{\alpha-1}}), 
\,\, \frac{\sqrt{\hat{\lambda}_{XY1}^{(d)}}}{\sqrt{\hat{\lambda}_{Xi}^{(d)}}} = O_{P}(\sqrt{d^{\alpha-1}}), \,\, i=2,3,\dots,n.
\end{align*}
Then, $\hat{\mathbf{R}}^{(d)}_{1}$ can be written as,
\begin{align*}
&\left\| \hat{\mathbf{R}}^{(d)}_{1} -
\begin{bmatrix}
\frac{\sqrt{\langle c_{1}, d_{1} \rangle}}{\| c_{1}\|_{2}} \langle \hat{\xi}_{X1}^{(d)}, \hat{\eta}_{X1}^{(d)} \rangle &\underset{1 \times (d-1)}{\mathbf{0}}\\
\frac{\sqrt{\hat{\lambda}_{XY1}^{(d)}}}{\sqrt{\hat{\lambda}_{X2}^{(d)}}} \langle \hat{\xi}_{X2}^{(d)}, \hat{\eta}_{X1}^{(d)} \rangle &\underset{1 \times (d-1)}{\mathbf{0}}\\
\vdots &\vdots\\
\frac{\sqrt{\hat{\lambda}_{XY1}^{(d)}}}{\sqrt{\hat{\lambda}_{Xn}^{(d)}}} \langle \hat{\xi}_{Xn}^{(d)}, \hat{\eta}_{X1}^{(d)} \rangle &\underset{1 \times (d-1)}{\mathbf{0}}
\end{bmatrix}\right\|_{2} \mathop{\longrightarrow}_{d \to \infty}^{P} 0.
\end{align*}
Since $(\sqrt{\hat{\lambda}_{XY1}^{(d)}}/\sqrt{\hat{\lambda}_{Xj}^{(d)}}) \langle \hat{\xi}_{Xi}^{(d)}, \hat{\eta}_{X1}^{(d)} \rangle$, for $i=2,3,\dots,n$, appears to blow up, we further investigate their magnitudes.
\begin{lem}[CCA HDLSS Asymptotic lemma 11.]
\label{lem11}
The inner product of the first sample singular vector $\hat{\eta}_{X1}^{(d)}$ and the $i$th eigenvector $\hat{\xi}_{Xi}^{(d)}$, $i=2,3,..n$, converges to 0 with the speed of $o_{P}(1/\sqrt{d^{\alpha-1}})$ as $d \to \infty$.
\end{lem}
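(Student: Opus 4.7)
The plan is to use the singular vector equation
\begin{align*}
\hat{\mathbf{\Sigma}}_{XY}^{(d)}\hat{\eta}_{Y1}^{(d)} = \hat{\lambda}_{XY1}^{(d)}\hat{\eta}_{X1}^{(d)},
\end{align*}
take the inner product with $\hat{\xi}_{Xi}^{(d)}$, and rewrite it using $\hat{\mathbf{\Sigma}}_{XY}^{(d)} = \mathbf{X}^{(d)}\mathbf{Y}^{(d)T}/n$ as
\begin{align*}
\langle \hat{\eta}_{X1}^{(d)}, \hat{\xi}_{Xi}^{(d)}\rangle
= \frac{1}{n\,\hat{\lambda}_{XY1}^{(d)}}\bigl\langle \mathbf{X}^{(d)T}\hat{\xi}_{Xi}^{(d)},\,\mathbf{Y}^{(d)T}\hat{\eta}_{Y1}^{(d)}\bigr\rangle.
\end{align*}
Since Lemma~\ref{lem4} gives $n\hat{\lambda}_{XY1}^{(d)}/d^{\alpha}\overset{p}{\to} C_{XY}\ne 0$, the target $o_{P}(1/\sqrt{d^{\alpha-1}})$ is equivalent to showing the inner product in the numerator is $o_{P}(\sqrt{d^{\alpha+1}})$.

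Next I would use the row decomposition~\eqref{sec3:equ17} to write
\begin{align*}
\mathbf{X}^{(d)T}\hat{\xi}_{Xi}^{(d)} &= \hat{\xi}_{Xi}^{(d)}(1)\,c_{1} + \hat{\xi}_{Xi}^{(d)}(2)\,c_{2} + \tau_{X}\,\tilde{w}_{i}^{(X)}, \\
\mathbf{Y}^{(d)T}\hat{\eta}_{Y1}^{(d)} &= \hat{\eta}_{Y1}^{(d)}(1)\,d_{1} + \hat{\eta}_{Y1}^{(d)}(2)\,d_{2} + \tau_{Y}\,\tilde{w}_{1}^{(Y)},
\end{align*}
where $\tilde{w}_{i}^{(X)} := \sum_{j\ge 3}\hat{\xi}_{Xi}^{(d)}(j)\,z_{Xj\bullet}$ and $\tilde{w}_{1}^{(Y)} := \sum_{j\ge 3}\hat{\eta}_{Y1}^{(d)}(j)\,z_{Yj\bullet}$, and then bound each of the nine cross terms obtained by multiplying the two expansions. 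A key preparatory step sharpens the first-entry bound on $\hat{\xi}_{Xi}^{(d)}$: from the first coordinate of $\hat{\mathbf{\Sigma}}_{X}^{(d)}\hat{\xi}_{Xi}^{(d)}=\hat{\lambda}_{Xi}^{(d)}\hat{\xi}_{Xi}^{(d)}$ one isolates $\hat{\xi}_{Xi}^{(d)}(1)$; its prefactor $\hat{\lambda}_{Xi}^{(d)} - \|c_{1}\|_{2}^{2}/n$ is of order $d^{\alpha}/n$, while a Cauchy--Schwarz bound on the right side (using $\|c_{1}\|_{2}=O_{P}(\sqrt{d^{\alpha}})$ combined with the chi-squared concentration $\|\tilde{w}_{i}^{(X)}\|_{2}^{2}\asymp \chi_{n}^{2}$ coming from the approximate independence of the bulk rows of $\mathbf{Z}_{X}$ from the coefficient vector $\{\hat{\xi}_{Xi}^{(d)}(j)\}_{j\ge 3}$) gives $\hat{\xi}_{Xi}^{(d)}(1)=O_{P}(1/\sqrt{d^{\alpha}})$, strictly sharper than $1/\sqrt{d^{\alpha-1}}$. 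This alone kills the otherwise dominant leading--leading cross term $\hat{\xi}_{Xi}^{(d)}(1)\,\hat{\eta}_{Y1}^{(d)}(1)\,\langle c_{1},d_{1}\rangle = O_{P}(\sqrt{d^{\alpha}}) = o_{P}(\sqrt{d^{\alpha+1}})$.

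The main obstacle will be the bulk--bulk term $\tau_{X}\tau_{Y}\langle\tilde{w}_{i}^{(X)},\tilde{w}_{1}^{(Y)}\rangle$ together with the two mixed bulk--leading terms $\tau_{X}\hat{\eta}_{Y1}^{(d)}(1)\langle \tilde{w}_{i}^{(X)}, d_{1}\rangle$ and $\tau_{Y}\hat{\xi}_{Xi}^{(d)}(1)\langle c_{1}, \tilde{w}_{1}^{(Y)}\rangle$: the naive operator-norm Cauchy--Schwarz bound $\|\tilde{w}\|_{2}\le \|\mathbf{Z}_{\text{bulk}}\|_{\mathrm{op}}=O_{P}(\sqrt{d})$ yields exactly $O_{P}(\sqrt{d^{\alpha+1}})$, landing on the forbidden boundary rather than strictly below it. To cross below, I would replace the operator-norm bound with the sharper estimate $\|\tilde{w}_{1}^{(Y)}\|_{2} = O_{P}\!\bigl(\sqrt{n/d^{\alpha-1}}\bigr)$, which follows from Lemma~\ref{lem5} applied to $\sum_{j\ge 3}(\hat{\eta}_{Y1}^{(d)}(j))^{2}$ combined with the chi-squared concentration identity $\|\mathbf{Z}^{T}v\|_{2}^{2}=\|v\|_{2}^{2}\chi_{n}^{2}$ for $v$ approximately independent of the Gaussian bulk block; and for the bulk--bulk term I would exploit the independence between the $\mathbf{Z}_{X}$ and $\mathbf{Z}_{Y}$ rows, which renders $\langle\tilde{w}_{i}^{(X)},\tilde{w}_{1}^{(Y)}\rangle$ mean-zero Gaussian conditional on the coefficients with variance strictly smaller than the Cauchy--Schwarz bound. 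With these sharper norm and variance estimates each of the nine cross terms becomes $o_{P}(\sqrt{d^{\alpha+1}})$, and dividing by $n\hat{\lambda}_{XY1}^{(d)}\asymp d^{\alpha}$ yields the desired $o_{P}(1/\sqrt{d^{\alpha-1}})$.
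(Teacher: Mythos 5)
Your reduction $\langle \hat{\eta}_{X1}^{(d)}, \hat{\xi}_{Xi}^{(d)}\rangle = (n\hat{\lambda}_{XY1}^{(d)})^{-1}\langle \mathbf{X}^{(d)T}\hat{\xi}_{Xi}^{(d)}, \mathbf{Y}^{(d)T}\hat{\eta}_{Y1}^{(d)}\rangle$ is correct, and you have correctly located the difficulty: naive Cauchy--Schwarz puts several cross terms exactly at the boundary $O_{P}(\sqrt{d^{\alpha+1}})$. But the tools you propose for crossing strictly below that boundary rest on a false premise. You claim $\|\tilde{w}_{i}^{(X)}\|_{2}^{2}\asymp\chi_{n}^{2}=O_{P}(1)$ by ``approximate independence'' of the bulk rows of $\mathbf{Z}_{X}$ from the coefficients $\{\hat{\xi}_{Xi}^{(d)}(j)\}_{j\ge 3}$. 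For $i\ge 2$ the sample eigenvector is \emph{built from} the bulk noise block and aligns with its top singular directions, so $\|\tilde{w}_{i}^{(X)}\|_{2}=\|\mathbf{Z}_{X,\mathrm{bulk}}^{T}\hat{\xi}_{Xi,\mathrm{bulk}}\|_{2}$ inherits the operator-norm scale $\sqrt{d}$ of the noise block, not the $\chi_{n}$ scale of a fixed direction. Indeed your two sharpenings are jointly inconsistent with the exact identity $\|\mathbf{X}^{(d)T}\hat{\xi}_{Xi}^{(d)}\|_{2}^{2}=n\hat{\lambda}_{Xi}^{(d)}\asymp d$ (Lemma~\ref{lem10}): if $\hat{\xi}_{Xi}^{(d)}(1)=O_{P}(1/\sqrt{d^{\alpha}})$ and $\|\tilde{w}_{i}^{(X)}\|_{2}=O_{P}(1)$, then every term of your row decomposition of $\mathbf{X}^{(d)T}\hat{\xi}_{Xi}^{(d)}$ has norm $O_{P}(1)$, forcing $n\hat{\lambda}_{Xi}^{(d)}=O_{P}(1)$, a contradiction. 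With the correct order $\|\tilde{w}_{i}^{(X)}\|_{2}\asymp\sqrt{d}$, your eigenvector-equation argument yields only $\hat{\xi}_{Xi}^{(d)}(1)=O_{P}(\|c_{1}\|_{2}\|\tilde{w}_{i}^{(X)}\|_{2}/d^{\alpha})=O_{P}(1/\sqrt{d^{\alpha-1}})$, and then the leading--leading term $\hat{\xi}_{Xi}^{(d)}(1)\hat{\eta}_{Y1}^{(d)}(1)\langle c_{1},d_{1}\rangle$ sits exactly on the forbidden boundary. Worse, after dividing by $n\hat{\lambda}_{XY1}^{(d)}\approx\langle c_{1},d_{1}\rangle$ that term is essentially $\hat{\xi}_{Xi}^{(d)}(1)$ itself, and since $\hat{\eta}_{X1}^{(d)}\to e_{1}^{(d)}$ the claim $\hat{\xi}_{Xi}^{(d)}(1)=o_{P}(1/\sqrt{d^{\alpha-1}})$ is essentially the lemma you are trying to prove; at that point the reduction is circular. (A side remark: the bulk--bulk term is not actually on the boundary, since $O_{P}(\sqrt{d})\cdot O_{P}(\sqrt{d})=O_{P}(d)=o_{P}(\sqrt{d^{\alpha+1}})$ for $\alpha>1$; it is the mixed term $\tau_{X}\hat{\eta}_{Y1}^{(d)}(1)\langle\tilde{w}_{i}^{(X)},d_{1}\rangle$ and the leading--leading term that are critical.)

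The paper avoids this trap by arguing in the $n$-dimensional score domain rather than the $d$-dimensional vector domain: it shows $\|Z_{\hat{\eta}_{X1}}^{(d)}-Z_{\hat{\xi}_{X1}}^{(d)}\|_{2}=o_{P}(1)$, uses the exact orthogonality of the score vectors $Z_{\hat{\xi}_{Xi}}^{(d)}$ to conclude $\langle Z_{\hat{\eta}_{X1}}^{(d)},Z_{\hat{\xi}_{Xi}}^{(d)}\rangle=o_{P}(1)$, and then converts this into $\langle\hat{\eta}_{X1}^{(d)},\hat{\xi}_{Xi}^{(d)}\rangle=o_{P}(1/\sqrt{d^{\alpha-1}})$ through the identity $\langle Z_{\hat{\eta}_{X1}}^{(d)},Z_{\hat{\xi}_{Xi}}^{(d)}\rangle=\bigl(\sqrt{\hat{\lambda}_{X1}^{(d)}}/\sqrt{\hat{\lambda}_{Xi}^{(d)}}\bigr)\langle\mathbf{X}^{(d)}Z_{\hat{\eta}_{X1}}^{(d)}/\sqrt{\hat{\lambda}_{X1}^{(d)}},\hat{\xi}_{Xi}^{(d)}\rangle$, where the eigenvalue ratio $\asymp\sqrt{d^{\alpha-1}}$ supplies the entire rate for free. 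If you want to salvage your vector-domain route you would need honest, non-circular rates for $\hat{\xi}_{Xi}^{(d)}(1)$ and for $\langle\tilde{w}_{i}^{(X)},d_{1}\rangle$; the score-domain detour is precisely how the paper manufactures those rates.
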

\begin{proof}
Consider $\mathbf{X}^{(d)}$ given in~(\ref{sec3:equ14}), which contains the observations of the random variable $X^{(d)}$. The singular value decomposition of $\mathbf{X}^{(d)}$ gives,
\begin{align*}
\underset{d \times n}{\mathbf{X}^{(d)}}=
\begin{bmatrix}
\hat{\xi}_{X1}^{(d)} &\hat{\xi}_{X2}^{(d)} &\ldots &\hat{\xi}_{Xn}^{(d)}
\end{bmatrix}.
\begin{bmatrix}
\sqrt{\hat{\lambda}_{X1}^{(d)}} &0 &\ldots &0\\
0 &\sqrt{\hat{\lambda}_{X2}^{(d)}} &\ldots &0\\
\vdots &\vdots &\ddots &\vdots\\
0 &0 &\ldots &\sqrt{\hat{\lambda}_{Xn}^{(d)}}
\end{bmatrix}
\begin{bmatrix}
\left( Z_{\hat{\xi}_{X1}}^{(d)} \right)^{T} \\
\left(Z_{\hat{\xi}_{X2}}^{(d)} \right)^{T}\\
\vdots\\
\left(Z_{\hat{\xi}_{Xn}}^{(d)} \right)^{T}
\end{bmatrix},
\end{align*}
where $\hat{\xi}_{Xi}^{(d)}$ and $\hat{\lambda}_{Xi}^{(d)})$ are eigenvectors and eigenvalues of the sample covariance matrix $\hat{\mathbf{\Sigma}}_{X}^{(d)}$, $\|Z_{\hat{\xi}_{Xi}}^{(d)}\|=1$ and $\langle Z_{\hat{\xi}_{Xi}}^{(d)}, Z_{\hat{\xi}_{Xj}}^{(d)} \rangle=0$ for $i \not= j$.  Note that $Z_{\hat{\xi}_{Xi}}^{(d)}$ is the standerdized scores of the projections of the $n$ observations in $\hat{\xi}_{Xi}^{(d)}$ onto the eigenvectors $\hat{\xi}_{Xi}^{(d)}$,
\begin{align*}
Z_{\hat{\xi}_{Xi}}^{(d)}=\frac{\left( \mathbf{X}^{(d)} \right)^{T} \hat{\xi}_{Xi}^{(d)}}{\sqrt{\hat{\lambda}_{Xi}^{(d)}}}, \,\, i=1,2,..,n.
\end{align*}
Define $Z_{\hat{\eta}_{X1}}^{(d)}$ to be the standerdized scores of the projections of the $n$ observations in $\hat{\xi}_{Xi}^{(d)}$ onto the first singular vector $\hat{\eta}_{X1}^{(d)}$ obtained from the sample cross-covariance matrix $\hat{\mathbf{\Sigma}}_{XY}^{(d)}$,
\begin{align*}
Z_{\hat{\eta}_{X1}}^{(d)}=\frac{\left( \mathbf{X}^{(d)} \right)^{T} \hat{\eta}_{X1}^{(d)}}{\sqrt{\hat{\lambda}_{X1}^{(d)}}}.
\end{align*}
First we show that $Z_{\hat{\xi}_{X1}}^{(d)}$ converges in probability to $Z_{\hat{\eta}_{X1}}^{(d)}$ as $d \to \infty$. By the triangle inequality,
\begin{align*}
\left\| Z_{\hat{\xi}_{X1}}^{(d)}-Z_{\hat{\eta}_{X1}}^{(d)} \right\|_{2} &= \left\| \frac{\left( \mathbf{X}^{(d)} \right)^{T} \hat{\xi}_{X1}^{(d)}}{\sqrt{\hat{\lambda}_{X1}^{(d)}}}-\frac{\left( \mathbf{X}^{(d)} \right)^{T} \hat{\eta}_{X1}^{(d)}}{\sqrt{\hat{\lambda}_{X1}^{(d)}}} \right\|_{2}\\
&\le \left\| \frac{\left( \mathbf{X}^{(d)} \right)^{T}}{\sqrt{\hat{\lambda}_{X1}^{(d)}}}(\hat{\xi}_{X1}^{(d)}-e_{1}^{(d)}) \right\|_{2}+\left\| \frac{\left( \mathbf{X}^{(d)} \right)^{T}}{\sqrt{\hat{\lambda}_{X1}^{(d)}}}(\hat{\eta}_{X1}^{(d)}-e_{1}^{(d)}) \right\|_{2}.
\end{align*}
The first term can be written as,
\begin{align*}
\left\| \frac{\left( \mathbf{X}^{(d)} \right)^{T}}{\sqrt{\hat{\lambda}_{X1}^{(d)}}}(\hat{\xi}_{X1}^{(d)}-e_{1}^{(d)}) \right\|_{2}^{2} = \sum_{i=1}^{n} \left\langle \frac{\mathbf{X}^{(d)}(\bullet,i)}{\sqrt{\hat{\lambda}_{X1}^{(d)}}}, \hat{\xi}_{X1}^{(d)}-e_{1}^{(d)} \right\rangle^{2},
\end{align*}
where $\mathbf{X}^{(d)}(\bullet,i)$ stands for the $i$th column of $\mathbf{X}^{(d)}$. Note that the first component in each column of $\mathbf{X}^{(d)}$ is of $O_{P}(\sqrt{d^{\alpha}})$, which is that of $\sqrt{\hat{\lambda}_{X1}^{(d)}}$ by Lemma~\ref{lem9}, and the rest of the elements are of $O_{P}(1)$ by Lemma~\ref{lem10}. Since $\| \hat{\xi}_{X1}^{(d)} - e_{1}^{(D)} \|_{2}$ converge in probability to 0 by Lemma~\ref{lem9}, using the Cauchy-Schwarz inequality and the fact that $\alpha > 1$,
\begin{align*}
\left\langle \frac{\mathbf{X}^{(d)}(\bullet,i)}{\sqrt{\hat{\lambda}_{X1}^{(d)}}}, \hat{\xi}_{X1}^{(d)}-e_{1}^{(d)} \right\rangle^{2} 
&\le \left\| \frac{\mathbf{X}^{(d)}(\bullet,i)}{\sqrt{\hat{\lambda}_{X1}^{(d)}}} \right\|_{2}^{2} \left\| \hat{\xi}_{X1}^{(d)}-e_{1}^{(d)} \right\|_{2}^{2}\\
&=\left( \left( \frac{\mathbf{X}^{(d)}(1,i)}{\sqrt{\hat{\lambda}_{X1}^{(d)}}} \right)^{2} + \sum_{j=2}^{d} \left( \frac{\mathbf{X}^{(d)}(j,i)}{\sqrt{\hat{\lambda}_{X1}^{(d)}}} \right)^{2} \right) \left\| \hat{\xi}_{X1}^{(d)}-e_{1}^{(d)} \right\|_{2}^{2}\\
&=\left( \left( \frac{\mathbf{X}^{(d)}(1,i)}{\sqrt{\hat{\lambda}_{X1}^{(d)}}} \right)^{2} + \frac{d^{\alpha}}{\hat{\lambda}_{X1}^{(d)}} \frac{1}{d^{\alpha-1}} \sum_{j=2}^{d} \frac{\left(\mathbf{X}^{(d)}(j,i)\right)^{2}}{d} \right) \left\| \hat{\xi}_{X1}^{(d)}-e_{1}^{(d)} \right\|_{2}^{2}\\
&\mathop{\longrightarrow}_{d \to \infty}^{p} 0, \,\, i=1,2,..,n.
\end{align*}
Therefore,
\begin{align*}
\left\| \frac{\left( \mathbf{X}^{(d)} \right)^{T}}{\sqrt{\hat{\lambda}_{X1}^{(d)}}}(\hat{\xi}_{X1}^{(d)}-e_{1}^{(d)}) \right\|_{2}^{2} \mathop{\longrightarrow}_{d \to \infty}^{p} 0.
\end{align*}
Similarly, using the results given in Lemma~\ref{lem4},~\ref{lem9} and~\ref{lem10}, we have,
\begin{align*}
\left\| \frac{\left( \mathbf{X}^{(d)} \right)^{T}}{\sqrt{\hat{\lambda}_{X1}^{(d)}}}(\hat{\eta}_{X1}^{(d)}-e_{1}^{(d)}) \right\|_{2}^{2} \mathop{\longrightarrow}_{d \to \infty}^{p} 0,
\end{align*}
which leads to,
\begin{align*}
\left\| Z_{\hat{\xi}_{X1}}^{(d)}-Z_{\hat{\eta}_{X1}}^{(d)} \right\|_{2} \mathop{\longrightarrow}_{d \to \infty}^{p} 0.
\end{align*}
Now we show that the inner product $\langle Z_{\hat{\xi}_{Xi}}^{(d)}, Z_{\hat{\eta}_{X1}}^{(d)} \rangle$ for $i=2,3,..,n$, converges to 0 as $d \to \infty$. Using $\langle Z_{\hat{\xi}_{Xi}}^{(d)}, Z_{\hat{\xi}_{X1}}^{(d)} \rangle = 0$ for $i=2,3,..,n$,
\begin{align*}
\left\langle Z_{\hat{\xi}_{Xi}}^{(d)}, Z_{\hat{\eta}_{X1}}^{(d)} \right\rangle &= \left\langle Z_{\hat{\xi}_{Xi}}^{(d)}, Z_{\hat{\eta}_{X1}}^{(d)} - Z_{\hat{\xi}_{X1}}^{(d)} + Z_{\hat{\xi}_{X1}}^{(d)} \right\rangle\\
&= \left\langle Z_{\hat{\xi}_{Xi}}^{(d)}, Z_{\hat{\eta}_{X1}}^{(d)} - Z_{\hat{\xi}_{X1}}^{(d)} \right\rangle + \left\langle Z_{\hat{\xi}_{Xi}}^{(d)}, Z_{\hat{\xi}_{X1}}^{(d)} \right\rangle\\
&= \left\langle Z_{\hat{\xi}_{Xi}}^{(d)}, Z_{\hat{\eta}_{X1}}^{(d)} - Z_{\hat{\xi}_{X1}}^{(d)} \right\rangle.
\end{align*}
Using the Cauchy-Schwarz inequality and the fact that $Z_{\hat{\xi}_{Xi}}^{(d)}$ for $i=1,2,..,n$, consists of standerdized scores (of unit variance),
\begin{align*}
\left\langle Z_{\hat{\xi}_{Xi}}^{(d)}, Z_{\hat{\eta}_{X1}}^{(d)} - Z_{\hat{\xi}_{X1}}^{(d)} \right\rangle^{2} &\le \left\| Z_{\hat{\xi}_{Xi}}^{(d)} \right\|_{2}^{2} \left\| Z_{\hat{\eta}_{X1}}^{(d)} - Z_{\hat{\xi}_{X1}}^{(d)} \right\|_{2}^{2}\\
&= 1 \times \left\| Z_{\hat{\eta}_{X1}}^{(d)} - Z_{\hat{\xi}_{X1}}^{(d)} \right\|_{2}^{2}\\
&\mathop{\longrightarrow}_{d \to \infty}^{p} 0, \,\, i=1,2,..,n.
\end{align*}
We now show that $\mathbf{X}^{(d)} Z_{\hat{\eta}_{X1}}^{(d)}/\sqrt{\hat{\lambda}_{X1}^{(d)}}$ converges in probability to $\hat{\eta}_{X1}^{(d)}$ as $d \to \infty$. By the triangle inequality,
\begin{align*}
\left\| \hat{\eta}_{X1}^{(d)} - \frac{\mathbf{X}^{(d)} Z_{\hat{\eta}_{X1}}^{(d)}}{\sqrt{\hat{\lambda}_{X1}^{(d)}}} \right\|_{2} &= \left\| \left(\hat{\eta}_{X1}^{(d)} - \hat{\xi}_{X1}^{(d)} \right) - \left( \hat{\xi}_{X1}^{(d)} - \frac{\mathbf{X}^{(d)} Z_{\hat{\eta}_{X1}}^{(d)}}{\sqrt{\hat{\lambda}_{X1}^{(d)}}} \right) \right\|_{2}\\
&\le \left\| \hat{\eta}_{X1}^{(d)} - \hat{\xi}_{X1}^{(d)} \right\|_{2} + \left\| \hat{\xi}_{X1}^{(d)} - \frac{\mathbf{X}^{(d)} Z_{\hat{\eta}_{X1}}^{(d)}}{\sqrt{\hat{\lambda}_{X1}^{(d)}}} \right\|_{2}.
\end{align*}
As $\| \hat{\xi}_{X1}^{(d)} - e_{1}^{(d)} \|_{2}$ and $\| \hat{\eta}_{X1}^{(d)} - e_{1}^{(d)} \|_{2}$ converge in probability to 0 by Lemma~\ref{lem4} and~\ref{lem10}, so does $\| \hat{\eta}_{X1}^{(d)} - \hat{\xi}_{X1}^{(d)} \|_{2}$. By the triangle inequality, the second term becomes,
\begin{align*}
\left\| \hat{\xi}_{X1}^{(d)} - \frac{\mathbf{X}^{(d)} Z_{\hat{\eta}_{X1}}^{(d)}}{\sqrt{\hat{\lambda}_{X1}^{(d)}}} \right\|_{2} &= \left\| \left( \hat{\xi}_{X1}^{(d)} - \frac{\mathbf{X}^{(d)} Z_{\hat{\xi}_{X1}}^{(d)}}{\sqrt{\hat{\lambda}_{X1}^{(d)}}}\right) - \left(\frac{\mathbf{X}^{(d)} Z_{\hat{\xi}_{X1}}^{(d)}}{\sqrt{\hat{\lambda}_{X1}^{(d)}}} -\frac{\mathbf{X}^{(d)} Z_{\hat{\eta}_{X1}}^{(d)}}{\sqrt{\hat{\lambda}_{X1}^{(d)}}} \right) \right\|_{2}\\
&\le \left\| \hat{\xi}_{X1}^{(d)} - \frac{\mathbf{X}^{(d)} Z_{\hat{\xi}_{X1}}^{(d)}}{\sqrt{\hat{\lambda}_{X1}^{(d)}}} \right\|_{2} + \left\| \frac{\mathbf{X}^{(d)} Z_{\hat{\xi}_{X1}}^{(d)}}{\sqrt{\hat{\lambda}_{X1}^{(d)}}} -\frac{\mathbf{X}^{(d)} Z_{\hat{\eta}_{X1}}^{(d)}}{\sqrt{\hat{\lambda}_{X1}^{(d)}}} \right\|_{2}.
\end{align*}
The first term is 0 since $\mathbf{X}^{(d)} Z_{\hat{\xi}_{X1}}^{(d)}/\sqrt{\hat{\lambda}_{X1}^{(d)}} = \hat{\xi}_{X1}^{(d)}$ in the singular decompostion of $\mathbf{X}^{(d)}$. The second term goes as follows,
\begin{align*}
\left\| \frac{\mathbf{X}^{(d)} Z_{\hat{\xi}_{X1}}^{(d)}}{\sqrt{\hat{\lambda}_{X1}^{(d)}}}-\frac{\mathbf{X}^{(d)} Z_{\hat{\eta}_{X1}}^{(d)}}{\sqrt{\hat{\lambda}_{X1}^{(d)}}} \right\|_{2}^{2} &= \left\| \frac{\mathbf{X}^{(d)}}{\sqrt{\hat{\lambda}_{X1}^{(d)}}}(Z_{\hat{\xi}_{X1}}^{(d)}-Z_{\hat{\eta}_{X1}}^{(d)}) \right\|_{2}^{2}\\
&= \sum_{i=1}^{d} \left\langle \left( \frac{\mathbf{X}^{(d)}(i,\bullet)}{\sqrt{\hat{\lambda}_{X1}^{(d)}}} \right)^{T}, Z_{\hat{\xi}_{X1}}^{(d)}-Z_{\hat{\eta}_{X1}}^{(d)} \right\rangle^{2},
\end{align*}
where $\mathbf{X}^{(d)}(i,\bullet)$ stands for the $i$th row of the $d \times n$ data matrix $\mathbf{X}^{(d)}$. Note that only the first row of the data matrix $\mathbf{X}^{(d)}$ has its components of magnitutde of $O_{P}(\sqrt{d^{\alpha}})$, and the components of the rest of the rows are all of $O_{P}(1)$. Since we showed that $\|Z_{\hat{\xi}_{X1}}^{(d)}-Z_{\hat{\eta}_{X1}}^{(d)}\|_{2}$ converges in probability to 0 as $d \to \infty$,
\begin{align*}
\left\langle \left( \frac{\mathbf{X}^{(d)}(1,\bullet)}{\sqrt{\hat{\lambda}_{X1}^{(d)}}} \right)^{T}, Z_{\hat{\xi}_{X1}}^{(d)}-Z_{\hat{\eta}_{X1}}^{(d)} \right\rangle^{2} &\le \left\| \left( \frac{\mathbf{X}^{(d)}(1,\bullet)}{\sqrt{\hat{\lambda}_{X1}^{(d)}}} \right)^{T} \right\|_{2}^{2} \left\| Z_{\hat{\xi}_{X1}}^{(d)}-Z_{\hat{\eta}_{X1}}^{(d)} \right\|_{2}^{2}\\
&\mathop{\longrightarrow}_{d \to \infty}^{p} 0.
\end{align*}
Similarly,
\begin{align*}
\left\langle \left( \frac{\mathbf{X}^{(d)}(i,\bullet)}{\sqrt{\hat{\lambda}_{X1}^{(d)}}} \right)^{T}, Z_{\hat{\xi}_{X1}}^{(d)}-Z_{\hat{\eta}_{X1}}^{(d)} \right\rangle^{2} &\le \left\| \left( \frac{\mathbf{X}^{(d)}(i,\bullet)}{\sqrt{\hat{\lambda}_{X1}^{(d)}}} \right)^{T} \right\|_{2}^{2} \left\| Z_{\hat{\xi}_{X1}}^{(d)}-Z_{\hat{\eta}_{X1}}^{(d)} \right\|_{2}^{2}\\
&\mathop{\longrightarrow}_{d \to \infty}^{p} 0, \,\, i=2,3,..,d.
\end{align*}
Therefore, we proved that,
\begin{align*}
\left\| \hat{\eta}_{X1}^{(d)} - \frac{\mathbf{X}^{(d)} Z_{\hat{\eta}_{X1}}^{(d)}}{\sqrt{\hat{\lambda}_{X1}^{(d)}}} \right\|_{2} \mathop{\longrightarrow}_{d \to \infty}^{p} 0.
\end{align*}
Finally to determine the speed of $\langle \hat{\eta}_{X1}^{(d)}, \hat{\xi}_{Xi}^{(d)} \rangle$ converging to 0 as $d \to \infty$, 
\begin{align*}
\left( Z_{\hat{\eta}_{X1}}^{(d)} \right)^{T} Z_{\hat{\xi}_{Xi}}^{(d)} &= \frac{\left( Z_{\hat{\eta}_{X1}}^{(d)} \right)^{T} \left( \mathbf{X}^{(d)} \right)^{T} \hat{\xi}_{Xi}^{(d)}}{\sqrt{\hat{\lambda}_{Xi}^{(d)}}}\\
&= \frac{\sqrt{\hat{\lambda}_{X1}^{(d)}} \frac{\left( \mathbf{X}^{(d)} Z_{\hat{\eta}_{X1}}^{(d)} \right)^{T}}{\sqrt{\hat{\lambda}_{X1}^{(d)}}} \hat{\xi}_{Xi}^{(d)}}{\sqrt{\hat{\lambda}_{Xi}^{(d)}}}\\
&= \frac{\sqrt{d^{\alpha}}}{\sqrt{d}}\frac{\frac{\sqrt{\hat{\lambda}_{X1}^{(d)}}}{\sqrt{d^{\alpha}}} \frac{\left\langle \mathbf{X}^{(d)} Z_{\hat{\eta}_{X1}}^{(d)}, \hat{\xi}_{Xi}^{(d)} \right\rangle}{\sqrt{\hat{\lambda}_{X1}^{(d)}}}}{\frac{\sqrt{\hat{\lambda}_{Xi}^{(d)}}}{\sqrt{d}}}, \,\, i=2,3,..n.
\end{align*}
By Lemma~\ref{lem9} and~\ref{lem10}, the magnitudes of $\sqrt{\hat{\lambda}_{X1}^{(d)}}/\sqrt{d^{\alpha}}$ and $\sqrt{\hat{\lambda}_{Xi}^{(d)}}/\sqrt{d}$ are of $O_{P}(1)$. Recall that $\langle Z_{\hat{\eta}_{X1}}^{(d)}, Z_{\hat{\xi}_{Xi}}^{(d)} \rangle$ converges in probability to 0 as $d \to \infty$. Hence,
\begin{align*}
\left\langle \frac{\mathbf{X}^{(d)} Z_{\hat{\eta}_{X1}}^{(d)}}{\sqrt{\hat{\lambda}_{X1}^{(d)}}}, \hat{\xi}_{Xi}^{(d)} \right\rangle \mathop{\longrightarrow}_{d \to \infty}^{p} o_{P} \left(\frac{1}{\sqrt{d^{\alpha-1}}} \right), \,\, i=2,3,..,n.
\end{align*}
However, we know that,
\begin{align*}
\left\|\frac{\mathbf{X}^{(d)} Z_{\hat{\eta}_{X1}}^{(d)}}{\sqrt{\hat{\lambda}_{X1}^{(d)}}} - \hat{\eta}_{X1}^{(d)} \right\|_{2} = \left\| \frac{\mathbf{X}^{(d)} Z_{\hat{\eta}_{X1}}^{(d)}}{\sqrt{\hat{\lambda}_{X1}^{(d)}}} - \frac{\mathbf{X}^{(d)} Z_{\hat{\eta}_{X1}}^{(d)}}{\sqrt{\hat{\lambda}_{XY1}^{(d)}}} \right\|_{2} \mathop{\longrightarrow}_{d \to \infty}^{p} 0,
\end{align*}
where the magnitude of the sample singular value $\hat{\lambda}_{XY1}^{(d)}$ is the same as that of $\hat{\lambda}_{X1}^{(d)}$ as shown in Lemma~\ref{lem4}. Therefore, 
\begin{align*}
\langle \hat{\eta}_{X1}^{(d)}, \hat{\xi}_{Xi}^{(d)} \rangle = o_{P} \left(\frac{1}{\sqrt{d^{\alpha-1}}} \right), \,\, i=2,3,..,n.
\end{align*}
\end{proof}
\noindent By Lemma~\ref{lem11}, the matrix $\hat{\mathbf{R}}^{(d)}_{1}$ in~(\ref{sec3:equ23}) has the following form as $d \to \infty$,
\begin{align}
\label{sec3:equ24}
\left\| \hat{\mathbf{R}}^{(d)}_{1}-
\begin{bmatrix}
\frac{\sqrt{\langle c_{1}, d_{1} \rangle}}{\| c_{1} \|_{2}} &\underset{1 \times (d-1)}{\mathbf{0}}\\
\underset{(d-1) \times 1}{\mathbf{0}} &\underset{(d-1) \times (d-1)}{\mathbf{0}}
\end{bmatrix} \right\|_{2} \mathop{\longrightarrow}_{d \to \infty}^{P} 0.
\end{align}
Similarly, the corresponding part of $\hat{\mathbf{R}}^{(d)}_{1}$ on the right side of the matrix $\hat{\mathbf{R}}^{(d)}$ (call it $\hat{\mathbf{R}}^{(d)}_{2}$) reduces to a similar form of~(\ref{sec3:equ24}),
\begin{align*}
\left\| \hat{\mathbf{R}}^{(d)}_{2}-
\begin{bmatrix}
\frac{\sqrt{\langle c_{1}, d_{1} \rangle}}{\| c_{2} \|_{2}} &\underset{1 \times (d-1)}{\mathbf{0}}\\
\underset{(d-1) \times 1}{\mathbf{0}} &\underset{(d-1) \times (d-1)}{\mathbf{0}}
\end{bmatrix} \right\|_{2} \mathop{\longrightarrow}_{d \to \infty}^{P} 0.
\end{align*}
Then, the matrix $\hat{\mathbf{R}}^{(d)}$ is written as,
\begin{align*}
\hat{\mathbf{R}}^{(d)}=
\begin{bmatrix}
\hat{\xi}_{X1}^{(d)} &\hat{\xi}_{X2}^{(d)} &\ldots &\hat{\xi}_{Xn}^{(d)}
\end{bmatrix}
\hat{\mathbf{R}}^{(d)}_{1} \hat{\mathbf{R}}^{(d)}_{2}
\begin{bmatrix}
\left( \hat{\xi}_{Y1}^{(d)} \right)^{T}\\ 
\left( \hat{\xi}_{Y2}^{(d)} \right)^{T}\\
\vdots\\
\left( \hat{\xi}_{Yn}^{(d)} \right)^{T}
\end{bmatrix},
\end{align*}
and its limiting matrix is,
\begin{align}
\label{sec3:equ25}
\left\| \hat{\mathbf{R}}^{(d)}-
\begin{bmatrix}
\hat{\xi}_{X1}^{(d)} &\hat{\xi}_{X2}^{(d)} &\ldots &\hat{\xi}_{Xn}^{(d)}
\end{bmatrix}
\begin{bmatrix}
\frac{\langle c_{1}, d_{1} \rangle}{\| c_{1} \|_{2}\| c_{2} \|_{2}} &\underset{1 \times (d-1)}{\mathbf{0}}\\
\underset{(d-1) \times 1}{\mathbf{0}} &\underset{(d-1) \times (d-1)}{\mathbf{0}}
\end{bmatrix}
\begin{bmatrix}
\left( \hat{\xi}_{Y1}^{(d)} \right)^{T}\\
\left( \hat{\xi}_{Y2}^{(d)} \right)^{T}\\
\vdots\\
\left( \hat{\xi}_{Yn}^{(d)} \right)^{T}
\end{bmatrix}\right\|_{2}
\mathop{\longrightarrow}_{d \to \infty}^{P} 0,
\end{align}
where $\hat{\xi}_{Yi}^{(d)}$ is a sample eigenvector obtained from the eigendecomposition of the sample covariance matrix shown in~(\ref{sec3:equ19}) from the data matrix $\mathbf{Y}^{(d)}$ of~(\ref{sec3:equ14}). 

Perform the SVD of $\hat{\mathbf{R}}^{(d)}$ and denote its sample left and right singular vectors by $\hat{\eta}_{RXi}^{(d)}$ and $\hat{\eta}_{RYi}^{(d)}$ for $i=1,2,..,n$. From~(\ref{sec3:equ25}), the sample singular vectors of $\hat{\mathbf{R}}^{(d)}$ are linear combinations of $\hat{\xi}_{Xi}^{(d)}$'s or $\hat{\xi}_{Yi}^{(d)}$'s,
\begin{align}
\label{sec3:equ26}
\begin{aligned}
\hat{\eta}_{RXi}^{(d)}=\sum_{j=1}^{n} a_{j}^{(d)} \hat{\xi}_{Xj}^{(d)}, \,\, \hat{\eta}_{RYi}^{(d)}=\sum_{k=1}^{n} b_{k}^{(d)} \hat{\xi}_{Yk}^{(d)}, \,\, i=1,2,\dots,n,
\end{aligned}
\end{align}
where $(a_{1}^{(d)})^{2}+ (a_{2}^{(d)})^{2}+\dots+(a_{n}^{(d)})^{2}=(b_{1}^{(d)})^{2}+ (b_{2}^{(d)})^{2}+\dots+(b_{n}^{(d)})^{2}=1$ to ensure that norms of $\hat{\eta}_{RXi}^{(d)}$ and $\hat{\eta}_{RYi}^{(d)}$ are of unit length.

\subsubsection{Behavior of the first sample canonical weight vector}

The sample canonical weight vectors $\hat{\psi}_{Xi}^{(d)}$ and $\hat{\psi}_{Yi}^{(d)}$ are found by unscaling and normalizing $\hat{\eta}_{RX1}^{(d)}$ and $\hat{\eta}_{RY1}^{(d)}$ as in~(\ref{sec2:equ4}) using the pseudoinverse of~(\ref{sec3:equ5}),
\begin{align}
\label{sec3:equ27}
\hat{\psi}_{Xi}^{(d)} = \frac{\left( \hat{\mathbf{\Sigma}}^{(d)}_{X} \right)^{-\frac{1}{2}} \hat{\eta}_{RXi}^{(d)}}{\left \| \left( \hat{\mathbf{\Sigma}}^{(d)}_{X} \right)^{-\frac{1}{2}} \hat{\eta}_{RXi}^{(d)} \right\|_{2}} , \,\, \hat{\psi}_{Yi}^{(d)} = \frac{\left( \hat{\mathbf{\Sigma}}^{(d)}_{Y} \right)^{-\frac{1}{2}} \hat{\eta}_{RYi}^{(d)}}{\left \| \left( \hat{\mathbf{\Sigma}}^{(d)}_{Y} \right)^{-\frac{1}{2}} \hat{\eta}_{RYi}^{(d)} \right\|_{2}}, \,\, i=1,2,..,n.
\end{align}
To see if $\hat{\psi}_{X1}^{(d)}$ and $\hat{\psi}_{Y1}^{(d)}$ is consistent to their population counterparts $\psi_{X1}^{(d)}$ and $\psi_{Y1}^{(d)}$ given in~(\ref{sec3:equ13}), we investigate the limiting value of their inner products $\langle \hat{\psi}_{X1}^{(d)}, \psi_{X1}^{(d)} \rangle$ and $\langle \hat{\psi}_{Y1}^{(d)}, \psi_{Y1}^{(d)} \rangle$, which measure the cosine of the angle formed by the two. Expanding~(\ref{sec3:equ27}),
\begin{align*}
\langle \hat{\psi}_{X1}^{(d)}, \psi_{X1}^{(d)} \rangle &= \frac{ \left( \psi_{X1}^{(d)} \right)^{T} \left( \hat{\mathbf{\Sigma}}^{(d)}_{X} \right)^{-\frac{1}{2}} \hat{\eta}_{RX1}^{(d)}}{\left\| \psi_{X1}^{(d)} \right\|_{2} \left \| \left( \hat{\mathbf{\Sigma}}^{(d)}_{X} \right)^{-\frac{1}{2}} \hat{\eta}_{RX1}^{(d)} \right\|_{2}}\\
&=\frac{ \left( \psi_{X1}^{(d)} \right)^{T} \left( \sum\limits_{i=1}^{n} \hat{\lambda}_{Xi}^{(d)} \hat{\xi}_{Xi}^{(d)} \left( \hat{\xi}_{Xi}^{(d)} \right)^{T} \right)^{-\frac{1}{2}} \hat{\eta}_{RX1}^{(d)}}{\left\| \psi_{X1}^{(d)} \right\|_{2} \left \| \left( \sum\limits_{i=1}^{n} \hat{\lambda}_{Xi}^{(d)} \hat{\xi}_{Xi}^{(d)} \left( \hat{\xi}_{Xi}^{(d)} \right)^{T} \right)^{-\frac{1}{2}} \hat{\eta}_{RX1}^{(d)} \right\|_{2}}\\ 
&=\frac{ \left( \cos \theta_{X} e{1}^{(d)} + \sin \theta_{X} e_{2}^{(d)} \right)^{T} \left( \sum\limits_{i=1}^{n} \frac{1}{\sqrt{\hat{\lambda}_{Xi}^{(d)}}} \hat{\xi}_{Xi}^{(d)} \left( \hat{\xi}_{Xi}^{(d)} \right)^{T} \right) \hat{\eta}_{RX1}^{(d)}}{\left\| \cos \theta_{X} e_{1}^{(d)} + \sin \theta_{X} e_{2}^{(d)}\right\|_{2} \left \| \left( \sum\limits_{i=1}^{n} \frac{1}{\sqrt{\hat{\lambda}_{Xi}^{(d)}}} \hat{\xi}_{Xi}^{(d)} \left( \hat{\xi}_{Xi}^{(d)} \right)^{T} \right) \hat{\eta}_{RX1}^{(d)} \right\|_{2}}.
\end{align*}
Let's take a close look at the denominator term. It is easy to see that $\left\| \cos \theta_{X} e_{1}^{(d)} + \sin \theta_{X} e_{2}^{(d)}\right\|_{2}=1$. Using~(\ref{sec3:equ26}),
\begin{align*}
\left \| \left( \sum\limits_{i=1}^{n} \frac{1}{\sqrt{\hat{\lambda}_{Xi}^{(d)}}} \hat{\xi}_{Xi}^{(d)} \left( \hat{\xi}_{Xi}^{(d)} \right)^{T} \right) \hat{\eta}_{RX1}^{(d)} \right\|_{2}&=\left \| \sum_{i=1}^{n} \sum_{j=1}^{n} \frac{a_{j}}{\sqrt{\hat{\lambda}_{Xi}^{(d)}}} \langle \hat{\xi}_{Xi}^{(d)}, \hat{\xi}_{Xj}^{(d)} \rangle \hat{\xi}_{Xi}^{(d)} \right\|_{2}\\
&=\left \| \sum\limits_{i=1}^{n} \frac{a_{i}}{\sqrt{\hat{\lambda}_{Xi}^{(d)}}} \hat{\xi}_{Xi}^{(d)} \right\|_{2}.
\end{align*}
Now take a look at the numerator term,
\begin{align*}
&\left( \cos \theta_{X} e_{1}^{(d)}+\sin \theta_{X} e_{2}^{(d)} \right)^{T} \left( \sum_{i=1}^{n} \frac{1}{\sqrt{\hat{\lambda}_{Xi}^{(d)}}} \hat{\xi}_{Xi}^{(d)} \left( \hat{\xi}_{Xi}^{(d)} \right)^{T} \right) \hat{\eta}_{RX1}^{(d)}\\
&\,\, =\left( \cos \theta_{X} e_{1}^{(d)}+\sin \theta_{X} e_{2}^{(d)} \right)^{T} \left(\sum_{i=1}^{n}\sum_{j=1}^{n} \frac{a_{j}^{(d)}}{\sqrt{\hat{\lambda}_{Xi}^{(d)}}} \langle \hat{\xi}_{Xi}^{(d)}, \hat{\xi}_{Xj}^{(d)} \rangle \hat{\xi}_{Xi}^{(d)} \right)\\
&\,\, =\left( \cos \theta_{X} e_{1}^{(d)} \right)^{T} \left(\sum_{i=1}^{n} \sum_{j=1}^{n} \frac{a_{j}^{(d)}}{\sqrt{\hat{\lambda}_{Xi}^{(d)}}} \langle \hat{\xi}_{Xi}^{(d)}, \hat{\xi}_{Xj}^{(d)} \rangle \hat{\xi}_{Xi}^{(d)} \right)
+\left(\sin \theta_{X} e_{2}^{(d)} \right)^{T} \left(\sum_{i=1}^{n}\sum_{j=1}^{n} \frac{a_{j}^{(d)}}{\sqrt{\hat{\lambda}_{Xi}^{(d)}}} \langle \hat{\xi}_{Xi}^{(d)}, \hat{\xi}_{Xj}^{(d)} \rangle \hat{\xi}_{Xi}^{(d)}\right)\\
&\,\, =\left( \cos \theta_{X} e_{1}^{(d)} \right)^{T} \left(\sum\limits_{i=1}^{n} \frac{a_{i}^{(d)}}{\sqrt{\hat{\lambda}_{Xi}^{(d)}}} \hat{\xi}_{Xi}^{(d)} \right)+\left( \sin \theta_{X} e_{2}^{(d)} \right)^{T} \left(\sum\limits_{i=1}^{n} \frac{a_{i}^{(d)}}{\sqrt{\hat{\lambda}_{Xi}^{(d)}}} \hat{\xi}_{Xi}^{(d)} \right)\\
&\,\, =\cos \theta_{X} \left(\sum\limits_{i=1}^{n} \frac{a_{i}^{(d)}}{\sqrt{\hat{\lambda}_{Xi}^{(d)}}} \langle e_{1}^{(d)}, \hat{\xi}_{Xi}^{(d)} \rangle \right)+\sin \theta_{X} \left(\sum\limits_{i=1}^{n} \frac{a_{i}^{(d)}}{\sqrt{\hat{\lambda}_{Xi}^{(d)}}} \langle e_{2}^{(d)}, \hat{\xi}_{Xi}^{(d)} \rangle \right).
\end{align*}
Hence, we have,
\begin{align}
\label{sec3:equ28}
\langle \hat{\psi}_{X1}^{(d)}, \psi_{X1}^{(d)} \rangle = \frac{\cos \theta_{X} \left(\sum\limits_{i=1}^{n} \frac{a_{i}^{(d)}}{\sqrt{\hat{\lambda}_{Xi}^{(d)}}} \langle e_{1}^{(d)}, \hat{\xi}_{Xi}^{(d)} \rangle \right)+\sin \theta_{X} \left(\sum\limits_{i=1}^{n} \frac{a_{i}^{(d)}}{\sqrt{\hat{\lambda}_{Xi}^{(d)}}} \langle e_{2}^{(d)}, \hat{\xi}_{Xi}^{(d)} \rangle \right)}{\left \| \sum\limits_{i=1}^{n} \frac{a_{i}^{(d)}}{\sqrt{\hat{\lambda}_{Xi}^{(d)}}} \hat{\xi}_{Xi}^{(d)} \right\|_{2}}.
\end{align}
Simiarly,
\begin{align*}
\langle \hat{\psi}_{Y1}^{(d)}, \psi_{Y1}^{(d)} \rangle = \frac{\cos \theta_{Y} \left(\sum\limits_{i=1}^{n} \frac{b_{i}^{(d)}}{\sqrt{\hat{\lambda}_{Yi}^{(d)}}} \langle e_{1}^{(d)}, \hat{\xi}_{Yi}^{(d)} \rangle \right)+\sin \theta_{Y} \left(\sum\limits_{i=1}^{n} \frac{b_{i}^{(d)}}{\sqrt{\hat{\lambda}_{Yi}^{(d)}}} \langle e_{2}^{(d)}, \hat{\xi}_{Yi}^{(d)} \rangle \right)}{\left \| \sum\limits_{i=1}^{n} \frac{b_{i}^{(d)}}{\sqrt{\hat{\lambda}_{Yi}^{(d)}}} \hat{\xi}_{Yi}^{(d)} \right\|_{2}}.
\end{align*}
Observe from~(\ref{sec3:equ25}) that, for $\hat{\eta}_{RX1}^{(d)}$, $a_{1}^{(d)}$ converges to 1 and $a_{i}^{(d)}$ for $i=2,3,\dots,n$, does to 0 as $d \to \infty$, which implies,
\begin{align*}
&\left( \sum\limits_{i=1}^{n} \frac{a_{i}^{(d)}}{\sqrt{\hat{\lambda}_{Xi}^{(d)}}} \langle e_{1}^{(d)}, \hat{\xi}_{Xi}^{(d)} \rangle-\frac{1}{\sqrt{\hat{\lambda}_{X1}^{(d)}}} \langle e_{1}^{(d)}, \hat{\xi}_{X1}^{(d)} \rangle \right)^{2} \mathop{\longrightarrow}_{d \to \infty}^{P} 0,\\
&\left( \sum\limits_{i=1}^{n} \frac{a_{i}^{(d)}}{\sqrt{\hat{\lambda}_{Xi}^{(d)}}} \langle e_{2}^{(d)}, \hat{\xi}_{Xi}^{(d)} \rangle-\frac{1}{\sqrt{\hat{\lambda}_{X1}^{(d)}}} \langle e_{2}^{(d)}, \hat{\xi}_{X1}^{(d)} \rangle \right)^{2} \mathop{\longrightarrow}_{d \to \infty}^{P} 0,\\
&\left\| \sum\limits_{i=1}^{n} \frac{a_{i}^{(d)}}{\sqrt{\hat{\lambda}_{Xi}^{(d)}}} \hat{\xi}_{Xi}^{(d)}-\frac{1}{\sqrt{\hat{\lambda}_{X1}^{(d)}}} \hat{\xi}_{X1}^{(d)}\right\|_{2}^{2} \mathop{\longrightarrow}_{d \to \infty}^{P} 0.
\end{align*}
Then,~(\ref{sec3:equ28}) becomes,
\begin{align*}
\left( \langle \hat{\psi}_{X1}^{(d)}, \psi_{X1}^{(d)} \rangle - \frac{\cos \theta_{X} \left( \frac{d^{\alpha}}{\sqrt{\hat{\lambda}_{X1}^{(d)}}} \langle e_{1}^{(d)}, \hat{\xi}_{X1}^{(d)} \rangle \right)+\sin \theta_{X} \left(\frac{d^{\alpha}}{\sqrt{\hat{\lambda}_{X1}^{(d)}}} \langle e_{2}^{(d)}, \hat{\xi}_{X1}^{(d)} \rangle \right)}{\left \| \frac{d^{\alpha}}{\sqrt{\hat{\lambda}_{X1}^{(d)}}} \hat{\xi}_{X1}^{(d)} \right\|_{2}} \right)^{2} \mathop{\longrightarrow}_{d \to \infty}^{P} 0.
\end{align*}
Recall from Lemma~\ref{lem9} and~\ref{lem10} that $d^{\alpha}/\sqrt{\hat{\lambda}_{X1}^{(d)}}$ is of $(\asymp O_{P}(1))$, $\langle e_{1}^{(d)}, \hat{\xi}_{X1}^{(d)} \rangle$ and $\langle e_{2}^{(d)}, \hat{\xi}_{X1}^{(d)} \rangle$ converge in probability to 1 and 0, respectively. Therefore, we finally have, 
\begin{align*}
\left( \langle \hat{\psi}_{X1}^{(d)}, \psi_{X1}^{(d)} \rangle - \cos \theta_{X} \right)^{2} \mathop{\longrightarrow}_{d \to \infty}^{P} 0.
\end{align*}
Similarly for $\hat{\psi}_{Y1}^{(d)}$,
\begin{align*}
\left( \langle \hat{\psi}_{Y1}^{(d)}, \psi_{Y1}^{(d)} \rangle - \cos \theta_{Y} \right)^{2} \mathop{\longrightarrow}_{d \to \infty}^{P} 0.
\end{align*}

\subsubsection{Behavior of the first sample canonical correlation coefficient}

The limiting value of first sample canonical correlation coefficients is found as the first singular value of the matrix $\hat{\mathbf{R}}^{(d)}$ in~(\ref{sec3:equ25}) under the limiting operation of $d \to \infty$, which turns out its (1,1)th entry,
\begin{align*}
\left( \hat{\rho}_{1}-\frac{\langle c_{1}, d_{1} \rangle /d^{\alpha}}{\| c_{1} \|_{2}^{2} /d^{\alpha} \| c_{2}\|_{2}^{2} /d^{\alpha}} \right)^{2} \mathop{\longrightarrow}_{d \to \infty}^{P} 0.
\end{align*}
Recall from Lemma~\ref{lem2} and~\ref{lem7} that $\langle c_{1}, d_{1} \rangle /d^{\alpha}$ is the limiting value of (1,1) entry of $n\hat{\mathbf{\Sigma}}^{(d)}_{XY}/d^{\alpha}$ as $d \to \infty$ and that $\| c_{1} \|_{2}^{2} /d^{\alpha}$ and $\| c_{1} \|_{2}^{2} /d^{\alpha}$ are the limiting values of the (1,1) entries of $n\hat{\mathbf{\Sigma}}^{(d)}_{X}/d^{\alpha}$ and $n\hat{\mathbf{\Sigma}}^{(d)}_{Y}/d^{\alpha}$. That is, those values are found in the (d+1,d+1), (1,1) and (1,d+1) positions of the matrix of~(\ref{sec3:equ34}) scaled by $d^{\alpha}$ as $d$ increases,
\begin{align*}
\frac{1}{d^{\alpha}}
\begin{bmatrix}
\hat{n\mathbf{\Sigma}}_{X}^{(d)} &\hat{n\mathbf{\Sigma}}_{XY}^{(d)}\\
\left(n\hat{\mathbf{\Sigma}}_{XY}^{(d)} \right)^{T} &n\hat{\mathbf{\Sigma}}_{Y}^{(d)}
\end{bmatrix}=
\frac{1}{d^{\alpha}}
\begin{bmatrix}
\mathbf{X}^{(d)}\\
\mathbf{Y}^{(d)}
\end{bmatrix}
\begin{bmatrix}
\mathbf{X}^{(d)}\\
\mathbf{Y}^{(d)}
\end{bmatrix}^{T}.
\end{align*}
Referring to~(\ref{sec3:equ14}) and~(\ref{sec3:equ15}), we switch the (1,2) and (1,d+1) entries, the (2,1) and (d+1,1) entries and the (2,2) and (d+1,d+1) entries of the matrix $\mathbf{\Sigma}_{T}^{(2d)}$ and denote the resulting matrix by $\acute{\mathbf{\Sigma}}_{T}^{(2d)}$. We, also, switch the second and $(d+1)$th rows of the matrix $\mathbf{Z}^{(2d)}$ and denote the resulting matrix by $\acute{\mathbf{Z}}^{(2d)}$. Then, the two data matrices below,
\begin{align*}
\begin{bmatrix}
\mathbf{X}^{(d)}\\
\mathbf{Y}^{(d)}
\end{bmatrix} =
\left(\mathbf{\Sigma}_{T}^{(2d)}\right)^{\frac{1}{2}} \mathbf{Z}^{(2d)}
\,\, \text{and} \,\,
\left(\acute{\mathbf{\Sigma}}_{T}^{(2d)}\right)^{\frac{1}{2}} \acute{\mathbf{Z}}^{(2d)},
\end{align*}
are the same only except that the $(d+1)$th row of the first matrix is the second row of the second one. Following the similar steps as done in the proof of Lemma~\ref{lem2}, it can be shown that,
\begin{align*}
\left\| \left(\frac{\acute{\mathbf{\Sigma}}_{T}^{(2d)}}{d^{\alpha}}\right)^{\frac{1}{2}} \acute{\mathbf{Z}}^{(2d)}-
\begin{bmatrix}
\sigma_{X}^{2} &\rho\sigma_{X}\sigma_{Y} &\underset{1 \times (2d-2)}{\mathbf{0}}\\
\rho\sigma_{X}\sigma_{Y} &\sigma_{X}^{2} &\underset{1 \times (2d-2)}{\mathbf{0}}\\
\underset{(2d-2) \times 1}{\mathbf{0}} &\underset{(2d-2) \times 1}{\mathbf{0}} &\underset{(2d-2) \times (2d-2)}{\mathbf{0}}
\end{bmatrix}^{\frac{1}{2}}
\begin{bmatrix}
z_{X1\bullet}\\
z_{Y1\bullet}\\
z_{X2\bullet}\\
\vdots\\
z_{Xd\bullet}\\
z_{Y2\bullet}\\
\vdots\\
z_{Yd\bullet}
\end{bmatrix}
\right\|_{2} \mathop{\longrightarrow}_{d \to \infty}^{P} 0.
\end{align*}
Then, the limiting values of $\langle c_{1}, d_{1} \rangle /d^{\alpha}$, $\| c_{1} \|_{2}^{2} /d^{\alpha}$ and $\| c_{1} \|_{2}^{2} /d^{\alpha}$ are found in the first $2 \times 2$ block of the following matrix, as $d \to \infty$,
\begin{align*}
\frac{1}{d^{\alpha}}\left(\acute{\mathbf{\Sigma}}_{T}^{(2d)}\right)^{\frac{1}{2}} \acute{\mathbf{Z}}^{(2d)} \left( \acute{\mathbf{Z}}^{(2d)} \right)^{T} \left(\acute{\mathbf{\Sigma}}_{T}^{(2d)}\right)^{\frac{1}{2}}=\left(\frac{\acute{\mathbf{\Sigma}}_{T}^{(2d)}}{d^{\alpha}}\right)^{\frac{1}{2}} \acute{\mathbf{Z}}^{(2d)} \left( \acute{\mathbf{Z}}^{(2d)} \right)^{T} \left(\frac{\acute{\mathbf{\Sigma}}_{T}^{(2d)}}{d^{\alpha}}\right)^{\frac{1}{2}}.
\end{align*}
Therefore, the limiting values of $\langle c_{1}, d_{1} \rangle /d^{\alpha}$, $\| c_{1} \|_{2}^{2} /d^{\alpha}$ and $\| c_{1} \|_{2}^{2} /d^{\alpha}$ are identified as the (2,2), (1,1) and (2,1) entries of the following matrix,respectively,
\begin{align*}
\begin{bmatrix}
\sigma_{X}^{2} &\rho\sigma_{X}\sigma_{Y}\\
\rho\sigma_{X}\sigma_{Y} &\sigma_{X}^{2}
\end{bmatrix}^{\frac{1}{2}}
\begin{bmatrix}
z_{X1\bullet}\\
z_{Y1\bullet}
\end{bmatrix}
\begin{bmatrix}
z_{X1\bullet}\\
z_{Y1\bullet}
\end{bmatrix}^{T}
\begin{bmatrix}
\sigma_{X}^{2} &\rho\sigma_{X}\sigma_{Y}\\
\rho\sigma_{X}\sigma_{Y} &\sigma_{X}^{2}
\end{bmatrix}^{\frac{1}{2}}.
\end{align*}

\subsubsection{Behavior of the rest of sample canonical weight vectors}

We use the equation~(\ref{sec3:equ28}) for the second canonical weight vector $\hat{\psi}_{X2}^{(d)}$. Recall from~(\ref{sec3:equ25}) that $a_{1}^{(d)}$ of $\hat{\eta}_{RXi}^{(d)}$,for $i=2,3,\dots,n$, converges to 0, which implies,
\begin{align*}
&\left( \sum\limits_{i=1}^{n} \frac{a_{i}^{(d)}}{\sqrt{\hat{\lambda}_{Xi}^{(d)}}} \langle e_{1}^{(d)}, \hat{\xi}_{Xi}^{(d)} \rangle- \sum\limits_{j=2}^{n} \frac{a_{j}^{(d)}}{\sqrt{\hat{\lambda}_{Xj}^{(d)}}} \langle e_{1}^{(d)}, \hat{\xi}_{Xj}^{(d)} \rangle \right)^{2} \mathop{\longrightarrow}_{d \to \infty}^{P} 0,\\
&\left( \sum\limits_{i=1}^{n} \frac{a_{i}^{(d)}}{\sqrt{\hat{\lambda}_{Xi}^{(d)}}} \langle e_{2}^{(d)}, \hat{\xi}_{Xi}^{(d)} \rangle- \sum\limits_{j=2}^{n} \frac{a_{j}^{(d)}}{\sqrt{\hat{\lambda}_{Xj}^{(d)}}} \langle e_{2}^{(d)}, \hat{\xi}_{Xj}^{(d)} \rangle \right)^{2} \mathop{\longrightarrow}_{d \to \infty}^{P} 0,\\
&\left\| \sum\limits_{i=1}^{n} \frac{a_{i}^{(d)}}{\sqrt{\hat{\lambda}_{Xi}^{(d)}}} \hat{\xi}_{Xi}^{(d)}- \sum\limits_{j=2}^{n} \frac{a_{j}^{(d)}}{\sqrt{\hat{\lambda}_{Xj}^{(d)}}} \hat{\xi}_{Xj}^{(d)} \right\|_{2}^{2} \mathop{\longrightarrow}_{d \to \infty}^{P} 0.
\end{align*}
Then,~(\ref{sec3:equ28}) becomes,
\begin{align*}
\left( \langle \hat{\psi}_{X2}^{(d)}, \psi_{X1}^{(d)} \rangle - \frac{\cos \theta_{X} \left( \sum\limits_{j=2}^{n} \frac{da_{j}^{(d)}}{\sqrt{\hat{\lambda}_{Xj}^{(d)}}} \langle e_{1}^{(d)}, \hat{\xi}_{Xj}^{(d)} \rangle \right)+\sin \theta_{X} \left( \sum\limits_{j=2}^{n} \frac{da_{j}^{(d)}}{\sqrt{\hat{\lambda}_{Xj}^{(d)}}} \langle e_{1}^{(d)}, \hat{\xi}_{Xj}^{(d)} \rangle \right)}{\left \| \sum\limits_{j=2}^{n} \frac{da_{j}^{(d)}}{\sqrt{\hat{\lambda}_{Xj}^{(d)}}} \hat{\xi}_{Xj}^{(d)} \right\|_{2}} \right)^{2} \mathop{\longrightarrow}_{d \to \infty}^{P} 0.
\end{align*}
We know from\ref{lem10} that $d/\sqrt{\hat{\lambda}_{Xi}^{(d)}}$, for $i=2,3,\dots,n$, is of $(\asymp O_{P}(1))$ and that $\langle e_{1}^{(d)}, \hat{\xi}_{Xi}^{(d)} \rangle$ and $\langle e_{2}^{(d)}, \hat{\xi}_{Xi}^{(d)} \rangle$, for $i=2,3,\dots,n$, both converge in probability to 0. Therefore, we finally have, 
\begin{align*}
\left( \langle \hat{\psi}_{Xi}^{(d)}, \psi_{X1}^{(d)} \rangle - 0 \right)^{2} \mathop{\longrightarrow}_{d \to \infty}^{P} 0, \,\, i=2,3,\dots,n.
\end{align*}
Results are similar for $\hat{\psi}_{Xi}^{(d)}$ for $i=3,4,\dots,n$. By the similar argument,
\begin{align*}
\left( \langle \hat{\psi}_{Yi}^{(d)}, \psi_{Y1}^{(d)} \rangle - 0 \right)^{2} \mathop{\longrightarrow}_{d \to \infty}^{P} 0, \,\, i=2,3,\dots,n.
\end{align*}

\subsubsection{Behavior of the rest of sample canonical correlation coefficients}

The asymptotic $i$th sample canonical correlation coefficients, for $i=2,3,\dots,n$, is found as the $i$th singular value of the matrix $\hat{\mathbf{R}}^{(d)}$ in~(\ref{sec3:equ25}) under the limiting operation of $d \to \infty$,
\begin{align*}
\hat{\rho}_{i}\mathop{\longrightarrow}_{d \to \infty}^{p} 0, \,\, i=1,2,\dots,n.
\end{align*}

\subsection{Case of \texorpdfstring{$\alpha < 1$}{}}

We, now, prove Theorem~\ref{thm1} under the condition of $\alpha < 1$ with the spiked model of the population covariance structure of $X^{(d)}$ and $Y^{(d)}$ decribed in~(\ref{sec3:equ8}).

\subsubsection{Behavior of the cross-covariance matrix}

\begin{lem}[CCA HDLSS Asymptotic lemma 12.]
\label{lem12}
For $\alpha < 1$, the magnitudes of the sample singular values $\hat{\lambda}_{XYi}^{(d)}$ for $i=1,2,..,n$ are of $O_{p}(d)$ as $d \to \infty$.
\end{lem}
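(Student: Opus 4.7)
The strategy is to obtain a uniform bound on all $n$ sample singular values by controlling the Frobenius norm of $\hat{\mathbf{\Sigma}}^{(d)}_{XY}$. Since $\|\hat{\mathbf{\Sigma}}^{(d)}_{XY}\|_F^2 = \sum_{i=1}^n (\hat{\lambda}^{(d)}_{XYi})^2$ and $n$ is fixed, it is enough to show $\|n\hat{\mathbf{\Sigma}}^{(d)}_{XY}/d\|_F^2 = O_P(1)$; this immediately forces $\hat{\lambda}^{(d)}_{XYi}=O_P(d)$ for every $i$. This parallels the bound used in Lemma~\ref{lem6} for the non-leading singular values, but here it is applied to all singular values at once because in the $\alpha<1$ regime there is no surviving rank-one spike to extract.

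First, I would expand $\|n\hat{\mathbf{\Sigma}}^{(d)}_{XY}/d\|_F^2$ entrywise from the explicit form given in~(\ref{sec3:equ19}). This yields three groups of terms: the four ``spike--spike'' inner products $\langle c_a, d_b\rangle^2/d^2$ with $a,b\in\{1,2\}$; the ``spike--noise'' sums $\tau_Y^2\sum_{i=3}^d\langle c_a,z_{Yi\bullet}\rangle^2/d^2$ and $\tau_X^2\sum_{i=3}^d\langle z_{Xi\bullet},d_b\rangle^2/d^2$; and the ``noise--noise'' double sum $\tau_X^2\tau_Y^2\sum_{i,j=3}^d\langle z_{Xi\bullet},z_{Yj\bullet}\rangle^2/d^2$. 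Each inner product is then bounded via Cauchy--Schwarz, using the elementary inputs $\|c_1\|_2^2,\|d_1\|_2^2=O_P(d^\alpha)$, $\|c_2\|_2^2,\|d_2\|_2^2=O_P(1)$ (which follow from~(\ref{sec3:equ18}) since $a_5,\dots,a_8,b_5,\dots,b_8$ are all $O_P(1)$), together with $\|z_{Xi\bullet}\|_2^2,\|z_{Yi\bullet}\|_2^2\sim\chi_n^2$ i.i.d.\ across $i$.

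Under $\alpha<1$, the spike--spike block collapses to $O_P(d^{2\alpha-2})$ or smaller and vanishes. Each spike--noise sum factors as a product of the form $(\|c_a\|_2^2/d)\cdot(\sum_{i=3}^d\|z_{Yi\bullet}\|_2^2/d)$, whose first factor is $O_P(d^{\alpha-1})$ or $O_P(d^{-1})$ and whose second factor tends in probability to $n$ by the law of large numbers, so these contributions also vanish. The only term that stays bounded away from zero is the noise--noise double sum, which Cauchy--Schwarz bounds by $\bigl(\sum_i\|z_{Xi\bullet}\|_2^2/d\bigr)\bigl(\sum_j\|z_{Yj\bullet}\|_2^2/d\bigr)\to n^2$. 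Combining, $\|n\hat{\mathbf{\Sigma}}^{(d)}_{XY}/d\|_F^2 \le \tau_X^2\tau_Y^2 n^2 + o_P(1) = O_P(1)$, which yields the claimed bound on every $\hat{\lambda}^{(d)}_{XYi}$.

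The main obstacle is just the bookkeeping: verifying that the hypothesis $\alpha<1$ is invoked at exactly the places needed to suppress each spike-involving contribution relative to the $\chi^2$ bulk from the noise--noise block. No delicate cancellation argument (as in Lemma~\ref{lem15}) is required here, since an $O_P$ upper bound rather than an exact limiting value suffices.
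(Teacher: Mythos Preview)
Your proposal is correct and follows essentially the same approach as the paper: both bound $\|n\hat{\mathbf{\Sigma}}^{(d)}_{XY}/d\|_F^2$ via the same entrywise expansion from~(\ref{sec3:equ19}), apply Cauchy--Schwarz to each block, use $\alpha<1$ together with the law of large numbers to kill the spike-involving terms, and identify the noise--noise double sum as the sole surviving $O_P(1)$ contribution, then conclude $\hat{\lambda}_{XYi}^{(d)}=O_P(d)$ from $\sum_i(\hat{\lambda}_{XYi}^{(d)}/d)^2=O_P(1)$. Your remark that no rank-one extraction is needed here (in contrast to Lemma~\ref{lem6}) matches the paper's treatment exactly.
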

\begin{proof}
Using the Cauchy-Schwarz inequality,
\begin{align}
\label{sec3:equ30}
\begin{aligned}
\left\| \frac{n\hat{\mathbf{\Sigma}}^{(d)}_{XY}}{d}\right\|_{F}^{2}
=&\frac{\langle c_{1}, d_{1} \rangle^{2}}{d^{2-2\alpha}d^{2\alpha}}
+\frac{\langle c_{1}, d_{2} \rangle^{2}}{d^{2-\alpha}d^{\alpha}}
+\frac{\langle c_{2}, d_{1} \rangle^{2}}{d^{2-\alpha}d^{\alpha}}
+\frac{\langle c_{2}, d_{2} \rangle^{2}}{d^{2}}\\
&+\tau_{Y}^{2}\sum_{i=3}^{d} \frac{\langle c_{1}, z_{Yi\bullet} \rangle^{2}}{d^{2-\alpha}d^{\alpha}}
+\tau_{Y}^{2}\sum_{i=3}^{d} \frac{\langle c_{2}, z_{Yi\bullet} \rangle^{2}}{d^{2}}
+\tau_{X}^{2}\sum_{i=3}^{d} \frac{\langle z_{Xi\bullet}, d_{1} \rangle^{2}}{d^{2-\alpha}d^{\alpha}}\\
&+\tau_{X}^{2}\sum_{i=3}^{d} \frac{\langle z_{Xi\bullet}, d_{2} \rangle^{2}}{d^{2}}
+\tau_{X}^{2}\tau_{Y}^{2}\sum_{i=3}^{d}\sum_{j=3}^{d} \frac{\langle z_{Xi\bullet}, z_{Yj\bullet} \rangle^{2}}{d^{2}}\\
\le &\frac{\langle c_{1}, d_{1} \rangle^{2}}{d^{2-2\alpha}d^{2\alpha}}
+\frac{\langle c_{1}, d_{2} \rangle^{2}}{d^{2-\alpha}d^{\alpha}}
+\frac{\langle c_{2}, d_{1} \rangle^{2}}{d^{2-\alpha}d^{\alpha}}
+\frac{\langle c_{2}, d_{2} \rangle^{2}}{d^{2}}\\
&+\tau_{Y}^{2}\sum_{i=3}^{d} \frac{\| c_{1} \|_{2}^{2} \| z_{Yi\bullet} \|_{2}^{2}}{d^{2-\alpha}d^{\alpha}}
+\tau_{Y}^{2}\sum_{i=3}^{d} \frac{\| c_{2} \|_{2}^{2} \| z_{Yi\bullet} \|_{2}^{2}}{d^{2}}
+\tau_{X}^{2}\sum_{i=3}^{d} \frac{\| z_{Xi\bullet} \|_{2}^{2} \| d_{1} \|_{2}^{2}}{d^{2-\alpha}d^{\alpha}}\\
&+\tau_{X}^{2}\sum_{i=3}^{d} \frac{\| z_{Xi\bullet}\|_{2}^{2} \|d_{2} \|_{2}^{2}}{d^{2}}
+\tau_{X}^{2}\tau_{Y}^{2}\sum_{i=3}^{d}\sum_{j=3}^{d} \frac{\| z_{Xi\bullet}\|_{2}^{2} \| z_{Yj\bullet} \|_{2}^{2}}{d^{2}}.
\end{aligned}
\end{align}
Since $\langle c_{1}, d_{1} \rangle^{2}$ is of $O_{P}(d^{\alpha})$, $\langle c_{1}, d_{2} \rangle^{2}$ and $\langle c_{2}, d_{1} \rangle^{2}$ are of $O_{P}(\sqrt{d^{\alpha}})$, and $\langle c_{2}, d_{2} \rangle^{2}$ is of $O_{P}(1)$, using $\alpha < 1$,
\begin{align*}
\frac{\langle c_{1}, d_{1} \rangle^{2}}{d^{2-2\alpha}d^{2\alpha}} \mathop{\longrightarrow}_{d \to \infty}^{p} 0, \,\, \frac{\langle c_{1}, d_{2} \rangle^{2}}{d^{2\alpha}} \mathop{\longrightarrow}_{d \to \infty}^{p} 0, \,\, \frac{\langle c_{2}, d_{1} \rangle^{2}}{d^{2\alpha}} \mathop{\longrightarrow}_{d \to \infty}^{p} 0, \,\, \frac{\langle c_{2}, d_{2} \rangle^{2}}{d^{2\alpha}} \mathop{\longrightarrow}_{d \to \infty}^{p} 0.
\end{align*}
It is not hard to see that $\| z_{Xi\bullet} \|_{2}^{2} \sim \chi_{n}^{2}$ and $\| z_{Yi\bullet} \|_{2}^{2} \sim \chi_{n}^{2}$. Note that $\| c_{1} \|_{2}^{2}$ and $\| d_{1} \|_{2}^{2}$ are of $O_{P}(d^{\alpha})$ and that $\| c_{2} \|_{2}^{2}$ and $\| d_{2} \|_{2}^{2}$ are of $O_{P}(1)$. With the law of large numbers and the condition that $\alpha < 1$,
\begin{align*}
\tau_{Y}^{2}\sum_{i=3}^{d} \frac{\| c_{1} \|_{2}^{2} \| z_{Yi\bullet} \|_{2}^{2}}{d^{2-\alpha}d^{\alpha}}
&=\frac{\tau_{Y}^{2}}{d^{1-\alpha}} \left \| \frac{c_{1}}{\sqrt{d^{\alpha}}} \right \|_{2}^{2} \sum_{i=3}^{d} \frac{\| z_{Yi\bullet} \|_{2}^{2}}{d}\\
&\mathop{\longrightarrow}_{d \to \infty}^{p} 0 \times O_{P}(1) \times E(\chi_{n}^{2}) = 0,\\
\tau_{Y}^{2}\sum_{i=3}^{d} \frac{\| c_{2} \|_{2}^{2} \| z_{Yi\bullet} \|_{2}^{2}}{d^{2}}
&=\frac{\tau_{Y}^{2} \| c_{2} \|_{2}^{2}}{d} \sum_{i=3}^{d} \frac{\| z_{Yi\bullet} \|_{2}^{2}}{d}\\
&\mathop{\longrightarrow}_{d \to \infty}^{p} 0 \times E(\chi_{n}^{2}) = 0,\\
\tau_{X}^{2}\sum_{i=3}^{d} \frac{\| d_{1} \|_{2}^{2} \| z_{Xi\bullet} \|_{2}^{2}}{d^{2-\alpha}d^{\alpha}}
&=\frac{\tau_{X}^{2}}{d^{1-\alpha}} \left \| \frac{d_{1}}{\sqrt{d^{\alpha}}} \right \|_{2}^{2} \sum_{i=3}^{d} \frac{\| z_{Xi\bullet} \|_{2}^{2}}{d}\\
&\mathop{\longrightarrow}_{d \to \infty}^{p} 0 \times O_{P}(1) \times E(\chi_{n}^{2}) = 0,\\
\tau_{X}^{2}\sum_{i=3}^{d} \frac{\| d_{2} \|_{2}^{2} \| z_{Xi\bullet} \|_{2}^{2}}{d^{2}}
&=\frac{\tau_{X}^{2} \| d_{2} \|_{2}^{2}}{d} \sum_{i=3}^{d} \frac{\| z_{Xi\bullet} \|_{2}^{2}}{d}\\
&\mathop{\longrightarrow}_{d \to \infty}^{p} 0 \times E(\chi_{n}^{2}) = 0.
\end{align*}
Using $\alpha > 1$ and the law of large numbers,
\begin{align*}
\tau_{X}^{2}\tau_{Y}^{2}\sum_{i=3}^{d}\sum_{j=3}^{d} \frac{\| z_{Xi\bullet}\|_{2}^{2} \| z_{Yj\bullet} \|_{2}^{2}}{d^{2}}&=\tau_{X}^{2}\tau_{Y}^{2} \sum_{i=3}^{d} \frac{\| z_{Xi\bullet} \|_{2}^{2}}{d} \sum_{j=3}^{d} \frac{\| z_{Yj\bullet} \|_{2}^{2}}{d}\\
&\mathop{\longrightarrow}_{d \to \infty}^{p} \tau_{X}^{2}\tau_{Y}^{2} \times E(\chi_{n}^{2}) \times E(\chi_{n}^{2})\\ 
&= \tau_{X}^{2}\tau_{Y}^{2}n^{2}.
\end{align*}
We have,
\begin{align*}
\left\| \frac{\hat{\mathbf{\Sigma}}^{(d)}_{XY}}{d}\right\|_{F}^{2} = \sum_{i=1}^{n} \left( \frac{\hat{\lambda}_{XYi}^{(d)}}{d} \right)^{2} \mathop{\longrightarrow}_{d \to \infty}^{p} \le \tau_{X}^{2}\tau_{Y}^{2}.
\end{align*}
Therefore, the magnitudes of the sample singular values $\hat{\lambda}_{XYi}^{(d)}$ for $i=1,2,..,n$ are of $O_{p}(d)$.
\end{proof}
\begin{lem}[CCA HDLSS Asymptotic lemma 13.]
\label{lem13}
This lemma improves Lemma~\ref{lem12}. For $\alpha < 1$, the sample singular values $n\hat{\lambda}_{XYi}^{(d)}$ for $i=1,2,..,n$ are of $(\asymp O_{p}(d))$ with the following limiting quantity as $d \to \infty$,
\begin{align*}
\frac{\hat{\lambda}_{XYi}^{(d)}}{d} \mathop{\longrightarrow}_{d \to \infty}^{P} = \frac{\tau_{X}\tau_{Y}}{n}, \,\, i=1,2,..,n.
\end{align*}
\end{lem}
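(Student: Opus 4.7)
The plan is to exploit the fact that $\hat{\mathbf{\Sigma}}_{XY}^{(d)}=\frac{1}{n}\mathbf{X}^{(d)}(\mathbf{Y}^{(d)})^{T}$ is a $d\times d$ matrix of rank at most $n$, so its $n$ nonzero squared singular values coincide with the eigenvalues of the $n\times n$ matrix
\begin{align*}
\mathbf{W}^{(d)} = \frac{1}{n^{2}}(\mathbf{X}^{(d)})^{T}\mathbf{X}^{(d)}(\mathbf{Y}^{(d)})^{T}\mathbf{Y}^{(d)}.
\end{align*}
This reduction follows from the identity that nonzero eigenvalues of $AB$ equal those of $BA$, applied to $\hat{\mathbf{\Sigma}}_{XY}^{(d)T}\hat{\mathbf{\Sigma}}_{XY}^{(d)}=n^{-2}\mathbf{Y}^{(d)}(\mathbf{X}^{(d)})^{T}\mathbf{X}^{(d)}(\mathbf{Y}^{(d)})^{T}$ with $A=\mathbf{Y}^{(d)}$ and $B=(\mathbf{X}^{(d)})^{T}\mathbf{X}^{(d)}(\mathbf{Y}^{(d)})^{T}/n^{2}$. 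Since $\mathbf{W}^{(d)}$ lives in the fixed finite dimension $n$, its limiting spectrum is far more tractable than that of the original $d\times d$ object.

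Using the row decomposition in~(\ref{sec3:equ17}), I would expand
\begin{align*}
(\mathbf{X}^{(d)})^{T}\mathbf{X}^{(d)} = c_{1}^{T}c_{1}+c_{2}^{T}c_{2}+\tau_{X}^{2}\sum_{k=3}^{d}z_{Xk\bullet}^{T}z_{Xk\bullet},
\end{align*}
and analogously for $\mathbf{Y}^{(d)}$. Under $\alpha<1$ the two ``spike rows'' satisfy $\|c_{1}\|_{2}^{2}=O_{P}(d^{\alpha})$ and $\|c_{2}\|_{2}^{2}=O_{P}(1)$, both $o_{P}(d)$, so $c_{i}^{T}c_{i}/d$ vanishes in probability as an $n\times n$ matrix, and similarly for $d_{1},d_{2}$. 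The bulk term averages $d-2$ i.i.d.\ rank-one $n\times n$ matrices whose common mean is $\mathbf{I}_{n}$ (since $\mathbb{E}[z_{Xki}z_{Xkj}]=\delta_{ij}$), so the entry-wise law of large numbers produces
\begin{align*}
\frac{(\mathbf{X}^{(d)})^{T}\mathbf{X}^{(d)}}{d}\mathop{\longrightarrow}_{d\to\infty}^{P}\tau_{X}^{2}\mathbf{I}_{n},\qquad \frac{(\mathbf{Y}^{(d)})^{T}\mathbf{Y}^{(d)}}{d}\mathop{\longrightarrow}_{d\to\infty}^{P}\tau_{Y}^{2}\mathbf{I}_{n},
\end{align*}
in any matrix norm (all equivalent in the fixed dimension $n$).

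Multiplying the two displays, which is legitimate by the continuous mapping theorem since matrix multiplication is jointly continuous in fixed dimension, gives $\mathbf{W}^{(d)}/d^{2}\mathop{\longrightarrow}_{d\to\infty}^{P}(\tau_{X}^{2}\tau_{Y}^{2}/n^{2})\mathbf{I}_{n}$. Because the limiting matrix is a scalar multiple of the identity, continuity of eigenvalues at that point forces every eigenvalue of $\mathbf{W}^{(d)}/d^{2}$ to converge in probability to $\tau_{X}^{2}\tau_{Y}^{2}/n^{2}$; taking positive square roots then produces $\hat{\lambda}_{XYi}^{(d)}/d\mathop{\longrightarrow}_{d\to\infty}^{P}\tau_{X}\tau_{Y}/n$ for every $i=1,\ldots,n$, which is the claim. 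The main obstacle I anticipate is that $\mathbf{W}^{(d)}$ is a product of symmetric positive semidefinite matrices but is not itself symmetric, so one cannot invoke Weyl's inequality directly; this is defused precisely because each of the two factors individually converges to a scalar multiple of $\mathbf{I}_{n}$, so their limits commute and eigenvalue continuity at a multiple of the identity holds without subtlety.
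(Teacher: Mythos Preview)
Your argument is correct and is in fact cleaner than the paper's. The key difference is where the dimension reduction happens. The paper stays with the $d\times d$ matrix $\hat{\mathbf{\Sigma}}^{(d)}_{XY}$ throughout: it first computes the exact limit $\|n\hat{\mathbf{\Sigma}}^{(d)}_{XY}/d\|_{F}^{2}\to n\tau_{X}^{2}\tau_{Y}^{2}$ by expanding all $d^{2}$ entries and applying the law of large numbers, then obtains the matching upper bound $(\hat{\lambda}_{XYi}^{(d)}/d)^{2}\le \tau_{X}^{2}\tau_{Y}^{2}/n^{2}$ for each $i$ via Cauchy--Schwarz on $\langle(\mathbf{X}^{(d)})^{T}\hat{\eta}_{Xi}^{(d)},(\mathbf{Y}^{(d)})^{T}\hat{\eta}_{Yi}^{(d)}\rangle$ and the known limits of the sample eigenvalues $\hat{\lambda}_{Xj}^{(d)}/d,\hat{\lambda}_{Yj}^{(d)}/d$; the squeeze between the sum and the per-term bound then forces equality.

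You instead pass immediately to the $n\times n$ dual matrix $\mathbf{W}^{(d)}$ via the $AB/BA$ eigenvalue identity, show each Gram factor $(\mathbf{X}^{(d)})^{T}\mathbf{X}^{(d)}/d$ and $(\mathbf{Y}^{(d)})^{T}\mathbf{Y}^{(d)}/d$ converges to a scalar multiple of $\mathbf{I}_{n}$ by a single entrywise law of large numbers, and read off the common eigenvalue limit by continuity. This bypasses both the Frobenius-norm expansion and the Cauchy--Schwarz squeeze, and it does not need to invoke the sample-eigenvalue limits from the PCA theorem. The paper's route has the minor advantage of reusing machinery already set up for the $\alpha>1$ case, but your dual-Gram argument is shorter and more transparent for this lemma.
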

\begin{proof}
First we calculate the exact limiting value of the Frobenius norm of $\hat{\mathbf{\Sigma}}_{XY}^{(d)}/d$ as $d \to \infty$. In~(\ref{sec3:equ30}), the first 8 terms converges in probability to 0 as $d \to \infty$. The last term of~(\ref{sec3:equ30}) expands as,
\begin{align*}
\sum_{i=3}^{d}\sum_{j=3}^{d} \frac{\langle z_{Xi\bullet}, z_{Yj\bullet} \rangle^{2}}{d^{2}} &=\sum_{i=3}^{d} \sum_{j=3}^{d} \left( \sum_{k=1}^{n} \frac{ z_{Xik} z_{Yjk} }{d} \right)^{2}\\
&=\sum_{k=1}^{n} \left( \sum_{i=3}^{d} \frac{z_{Xik}^{2}}{d} \right) \left(\sum_{i=3}^{d} \frac{z_{Yjk}^{2}}{d} \right)\\
&\quad +\sum_{k \not= k'} \left( \sum_{i=3}^{d} \frac{z_{Xik} z_{Yik'}}{d} \right) \left(\sum_{j=3}^{d} \frac{z_{Xjk} z_{Yjk'}}{d} \right).
\end{align*}
Since $E(z_{yik} z_{xik'}) = 0$ for $k \not= k'$, by the law of large numberss,
\begin{align*}
\sum_{k \not= k'} \left( \sum_{i=3}^{d} \frac{z_{yik} z_{xik'}}{d} \right) \left(\sum_{j=3}^{d} \frac{z_{yjk} z_{xjk'}}{d} \right) \mathop{\longrightarrow}_{d \to \infty}^{P} 0.
\end{align*}
Since $z_{yik}^{2}$ and $z_{xik}^{2}$ follow a Chi-squared distribution with a degree of freedom of 1, by the law of large numberss,
\begin{align*}
\sum_{k=1}^{n} \left( \sum_{i=3}^{d} \frac{z_{yik}^{2}}{d} \right) \left(\sum_{i=3}^{d} \frac{z_{xjk}^{2}}{d} \right) \mathop{\longrightarrow}_{d \to \infty}^{P} n \times 1 \times 1 = n.
\end{align*}
Hence,
\begin{align}
\label{sec3:equ32}
\left \| \frac{n\hat{\mathbf{\Sigma}}_{XY}^{(d)}}{d} \right\|_{F}^{2} \mathop{\longrightarrow}_{d \to \infty}^{P} n\tau_{X}^{2} \tau_{Y}^{2}.
\end{align}
Now we set a bound for each sample singular value $\hat{\lambda}_{XYi}^{(d)}/d$ for $i=1,2,..,n$ as $d \to \infty$. Consider the data matrix $\mathbf{X}^{(d)}$ and $\mathbf{Y}^{(d)}$ in~(\ref{sec3:equ17}). Write the singular vecotrs $\hat{\eta}_{Xi}^{(d)}$ and $\hat{\eta}_{Yi}^{(d)}$ of the sample cross-covariance matrix $\hat{\mathbf{\Sigma}}^{(d)}_{XY}$ of~(\ref{sec3:equ19}) by the linear combinations of the sample eigenvectors $\hat{\xi}_{Xj}^{(d)}$ and $\hat{\xi}_{Yj}^{(d)}$,
\begin{align*}
\hat{\eta}_{Xi}^{(d)} = \sum_{j=1}^{n} a_{j} \hat{\xi}_{Xj}^{(d)}, \,\, \hat{\eta}_{Yi}^{(d)} = \sum_{j=1}^{n} b_{j} \hat{\xi}_{Yj}^{(d)}, \,\, i=1,2,..,n,
\end{align*}
where $a_{1}^{2}+a_{2}^{2}+\dots+a_{n}^{2} = b_{1}^{2}+b_{2}^{2}+\dots+b_{n}^{2} = 1$ to ensure a unit length. Using $Cov^{2}(X,Y) \le Var(X)Var(Y)$,
\begin{align*}
\left( \hat{\lambda}_{XYi}^{(d)} \right)^{2} &= \frac{1}{n^{2}}\left\langle \left( \mathbf{X}^{(d)} \right)^{T} \hat{\eta}_{Xi}^{(d)}, \left( \mathbf{Y}^{(d)} \right)^{T} \hat{\eta}_{Yi}^{(d)} \right\rangle^{2}\\
&\le \frac{1}{n^{2}}\left\langle \left( \mathbf{X}^{(d)} \right)^{T} \hat{\eta}_{Xi}^{(d)}, \left( \mathbf{X}^{(d)} \right)^{T} \hat{\eta}_{Xi}^{(d)} \right\rangle \left\langle \left( \mathbf{Y}^{(d)} \right)^{T} \hat{\eta}_{Yi}^{(d)}, \left( \mathbf{Y}^{(d)} \right)^{T} \hat{\eta}_{Yi}^{(d)} \right\rangle.
\end{align*}
Using the fact that the covariance between two sets of scores of the projections of the observations onto any two different eigenvectors is 0,
\begin{align*}
\frac{1}{n}\left\langle \left( \mathbf{X}^{(d)} \right)^{T} \hat{\eta}_{Xi}^{(d)}, \left( \mathbf{X}^{(d)} \right)^{T} \hat{\eta}_{Xi}^{(d)} \right\rangle = &\frac{1}{n}\left\langle \left( \mathbf{X}^{(d)} \right)^{T} \sum_{j=1}^{n} a_{j} \hat{\xi}_{Xj}^{(d)}, \left( \mathbf{X}^{(d)} \right)^{T} \sum_{j=1}^{n} a_{j} \hat{\xi}_{Xj}^{(d)} \right\rangle\\
= &\sum_{j = j'} \frac{1}{n}\left\langle \left( \mathbf{X}^{(d)} \right)^{T} a_{j} \hat{\xi}_{Xj}^{(d)}, \left( \mathbf{X}^{(d)} \right)^{T} a_{j'} \hat{\xi}_{Xj'}^{(d)} \right\rangle\\ 
&+ \sum_{j \not= j'} \frac{1}{n}\left\langle \left( \mathbf{X}^{(d)} \right)^{T} a_{j} \hat{\xi}_{Xj}^{(d)}, \left( \mathbf{X}^{(d)} \right)^{T} a_{j'} \hat{\xi}_{Xj'}^{(d)} \right\rangle\\
= &\sum_{j = j'} \frac{1}{n}\left\langle \left( \mathbf{X}^{(d)} \right)^{T} a_{j} \hat{\xi}_{Xj}^{(d)}, \left( \mathbf{X}^{(d)} \right)^{T} a_{j'} \hat{\xi}_{Xj'}^{(d)} \right\rangle\\ 
= &\sum_{j=1}^{n} \frac{a_{j}^{2}}{n} \left\langle \left( \mathbf{X}^{(d)} \right)^{T} \hat{\xi}_{Xj}^{(d)}, \left( \mathbf{X}^{(d)} \right)^{T} \hat{\xi}_{Xj}^{(d)} \right\rangle\\
= &\sum_{j=1}^{n} a_{j}^{2} \hat{\lambda}_{Xj}^{(d)}.
\end{align*}
Similarly,
\begin{align*}
\frac{1}{n}\left\langle \left( \mathbf{Y}^{(d)} \right)^{T} \hat{\eta}_{Yi}^{(d)}, \left( \mathbf{Y}^{(d)} \right)^{T} \hat{\eta}_{Yi}^{(d)} \right\rangle = \sum_{k=1}^{n} b_{k}^{2} \hat{\lambda}_{Yk}^{(d)}.
\end{align*}
By Lemma~\ref{lem10},
\begin{align*}
\left( \frac{\hat{\lambda}_{XYi}^{(d)}}{d} \right)^{2} &\le \frac{1}{n^{2}}\frac{\left\langle \left( \mathbf{X}^{(d)} \right)^{T} \hat{\eta}_{Xi}^{(d)}, \left( \mathbf{X}^{(d)} \right)^{T} \hat{\eta}_{Xi}^{(d)} \right\rangle}{d} \frac{\left\langle \left( \mathbf{Y}^{(d)} \right)^{T} \hat{\eta}_{Yi}^{(d)}, \left( \mathbf{Y}^{(d)} \right)^{T} \hat{\eta}_{Yi}^{(d)} \right\rangle}{d}\\
&=\left( \sum_{j=1}^{n} a_{j}^{2} \frac{\hat{\lambda}_{Xj}^{(d)}}{d} \right) \left( \sum_{k=1}^{n} b_{k}^{2} \frac{\hat{\lambda}_{Yk}^{(d)}}{d} \right)\\
& \mathop{\longrightarrow}_{d \to \infty}^{P} \frac{\tau_{X}^{2}}{n}\frac{\tau_{Y}^{2}}{n} \left( \sum_{j=1}^{n} a_{j}^{2} \right) \left( \sum_{k=1}^{n} b_{k}^{2} \right)\\ 
&= \frac{\tau_{X}^{2}\tau_{Y}^{2}}{n^{2}}, \,\, i=1,2,..,n.
\end{align*}
The constraints above and the proven fact of~(\ref{sec3:equ32}),
\begin{align*}
\left \| \frac{\hat{\mathbf{\Sigma}}_{XY}^{(d)}}{d} \right\|_{F}^{2} = \sum_{i=1}^{n} \left( \frac{\hat{\lambda}_{XYi}^{(d)}}{d} \right)^{2} \mathop{\longrightarrow}_{d \to \infty}^{P} \frac{1}{n}\tau_{X}^{2}\tau_{Y}^{2},
\end{align*}
implies that,
\begin{align*}
\frac{\hat{\lambda}_{XYi}^{(d)}}{d} \mathop{\longrightarrow}_{d \to \infty}^{P} \frac{\tau_{X}\tau_{Y}}{n}, \,\, i=1,2,..,n.
\end{align*}
\end{proof}

\subsubsection{Behavior of the sample covariance matrices}

Here, we only include the result about the sample covariance matrix $\hat{\mathbf{\Sigma}}^{(d)}_{X}$. That of $\hat{\mathbf{\Sigma}}^{(d)}_{Y}$ is similar.

\begin{lem}[CCA HDLSS Asymptotic lemma 14.]
\label{lem14}
The sample eigenvalue $\hat{\lambda}_{Xi}^{(d)}$ and eigenvector $\hat{\xi}_{Xi}^{(d)}$, $i=2,3,..,d$, converge in probability to the following quantities as $d \to \infty$,
\begin{align*}
\frac{\hat{\lambda}_{Xi}^{(d)}}{d} \mathop{\longrightarrow}_{d \to \infty}^{P} \frac{\tau_{X}^{2}}{n}, \,\, \langle \hat{\xi}_{Xi}^{(d)}, \xi^{(d)} \rangle \mathop{\longrightarrow}_{d \to \infty}^{P} 0, \,\, i=1,2,\dots,n,
\end{align*}
where $\xi^{(d)}$ is an any given vector in $R^{d}$.
\end{lem}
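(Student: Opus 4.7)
The plan is to read this lemma off from Theorem~\ref{thm2}, the one-spike HDLSS PCA result, and to upgrade its eigenvector statement to an arbitrary fixed direction via the dual representation of sample eigenvectors already used in Lemma~\ref{lem11}. The covariance of $X^{(d)}$ is exactly the one-spike model of Theorem~\ref{thm2} with spike exponent $\alpha<1$, so the spike $\sigma_{X}^{2}d^{\alpha}$ is asymptotically dominated by the cumulative noise mass $d\tau_{X}^{2}$, and sample PCA should behave as if the data were pure isotropic Gaussian noise.

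The eigenvalue claim follows almost immediately. Theorem~\ref{thm2}(i), specialized to the $\alpha<1$ branch, gives $\hat{\lambda}_{X1}^{(d)}/d \Longrightarrow \tau_{X}^{2}/n$; since the limit is a constant, this sharpens to convergence in probability. The companion ``rest of the eigenvalues'' statement of Theorem~\ref{thm2}(i), under the same scaling by $d$ used in the $\alpha>1$ counterpart Lemma~\ref{lem10}, yields $\hat{\lambda}_{Xi}^{(d)}/d \to \tau_{X}^{2}/n$ in probability for $i=2,3,\dots,n$. All $n$ limits agree, so the eigenvalue part of the lemma is done.

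For the strong inconsistency claim, I would write each sample eigenvector in the dual form
\begin{equation*}
\hat{\xi}_{Xi}^{(d)} = \frac{\mathbf{X}^{(d)} v_{i}^{(d)}}{\sqrt{n\hat{\lambda}_{Xi}^{(d)}}}, \quad v_{i}^{(d)} \in R^{n}, \,\, \| v_{i}^{(d)} \|_{2} = 1,
\end{equation*}
with $v_{i}^{(d)}$ the $i$th right singular vector of $\mathbf{X}^{(d)}/\sqrt{n}$, so that
\begin{equation*}
\left\langle \hat{\xi}_{Xi}^{(d)}, \xi^{(d)} \right\rangle = \frac{\left\langle v_{i}^{(d)}, \left(\mathbf{X}^{(d)}\right)^{T}\xi^{(d)} \right\rangle}{\sqrt{n\hat{\lambda}_{Xi}^{(d)}}},
\end{equation*}
and the Cauchy--Schwarz inequality bounds the numerator in absolute value by $\left\| \left(\mathbf{X}^{(d)}\right)^{T}\xi^{(d)} \right\|_{2}$. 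Using the row-by-row description~(\ref{sec3:equ17}) of $\mathbf{X}^{(d)}$, each of the $n$ coordinates of $(\mathbf{X}^{(d)})^{T}\xi^{(d)}$ picks up an $O_{P}(\sqrt{d^{\alpha}})|\xi^{(d)}(1)| + O_{P}(1)|\xi^{(d)}(2)|$ contribution from the two spiked rows, together with a Gaussian tail of variance at most $\tau_{X}^{2}\sum_{j=3}^{d}(\xi^{(d)}(j))^{2} \le \tau_{X}^{2}$; hence $\left\| \left(\mathbf{X}^{(d)}\right)^{T}\xi^{(d)} \right\|_{2} = O_{P}(\sqrt{d^{\alpha}})$ uniformly over unit $\xi^{(d)}$. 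Combining with the denominator estimate $\sqrt{n\hat{\lambda}_{Xi}^{(d)}} \asymp \sqrt{d}$ from the eigenvalue step yields $\left| \left\langle \hat{\xi}_{Xi}^{(d)}, \xi^{(d)} \right\rangle \right| = O_{P}(\sqrt{d^{\alpha-1}}) \to 0$ because $\alpha < 1$.

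The main obstacle here is purely bookkeeping rather than genuine difficulty: one must keep careful track of which coordinates of $\xi^{(d)}$ multiply the $O_{P}(\sqrt{d^{\alpha}})$-scale spiked rows versus the $O_{P}(1)$-scale noise rows of $\mathbf{X}^{(d)}$, and verify that the worst-case bound is delivered by the spiked coordinates. No new analytic ideas beyond Theorem~\ref{thm2} and the dual-representation trick of Lemma~\ref{lem11} are required for this lemma.
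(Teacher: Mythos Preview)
Your proposal is correct and takes essentially the same route as the paper. The paper's own proof of this lemma is a two-line reference: the eigenvalue statement is read off directly from the $\alpha<1$ branch of Theorem~\ref{thm2}, and the strong-inconsistency statement for arbitrary $\xi^{(d)}$ is deferred to the proof in the cited PCA paper~\cite{38}. Your eigenvalue argument is identical to this; for the eigenvector part you supply an explicit dual-representation/Cauchy--Schwarz bound rather than simply citing~\cite{38}, but this is exactly the kind of argument that reference contains, so there is no genuine methodological difference.
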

\begin{proof}
Recall that the underlying random variable $X^{(d)}$ of the sample covariance matrix $\hat{\mathbf{\Sigma}}_{X}^{(d)}$ has a simple spiked covariance structure of~(\ref{sec3:equ8}). Then, the asymptotic behavior of a sample eigenvalue is a direct consequence of Theorem~\ref{thm1} for $\alpha < 1$. The result on the behavior of $\langle \hat{\xi}_{Xi}^{(d)}, \xi^{(d)} \rangle$ is indicated in the proof of Theorem~\ref{thm1} given in~\cite{38}.
\end{proof}

\subsubsection{Behavior of the matrix \texorpdfstring{$\hat{\mathbf{R}}^{(d)}$}{}}

Lemma~\ref{lem13} states that the the singular values of the sample cross-covariance matrix $\hat{\mathbf{\Sigma}}^{(d)}_{XY}$ become indistinguishable in the limit of $d \to \infty$, in which situation there exist an infinite number of choices for a singular vector set. We choose its singular vectors such that the sample eigenvectors of the sample covariance matrices $\hat{\mathbf{\Sigma}}^{(d)}_{X}$ and $\hat{\mathbf{\Sigma}}^{(d)}_{X}$ are their limiting quantities,
\begin{align*}
\left\| \hat{\eta}_{Xi}^{(d)} - \hat{\xi}_{Xi}^{(d)} \right\|_{2}^{2} \mathop{\longrightarrow}_{d \to \infty}^{P} 0, \,\, \left\| \hat{\eta}_{Yj}^{(d)} - \hat{\xi}_{Yj}^{(d)} \right\|_{2}^{2} \mathop{\longrightarrow}_{d \to \infty}^{P} 0, \,\, i,j=1,2,\dots, n.
\end{align*}
Then, it is easily shown that the matrix $\hat{\mathbf{R}}^{(d)}$ defined in~(\ref{sec3:equ22}), reduces to the following limiting form, using the limiting values of the sample singular vectors and eigenvectors given in Lemma~\ref{lem13} and~\ref{lem14},
\begin{align}
\label{sec3:equ33}
\left\| \hat{\mathbf{R}}^{(d)}-
\begin{bmatrix}
\hat{\xi}_{X1}^{(d)} &\hat{\xi}_{X2}^{(d)} &\ldots &\hat{\xi}_{Xn}^{(d)}
\end{bmatrix}
\begin{bmatrix}
1 &0 &\ldots &0\\
0 &1 &\ldots &0\\
\vdots &\vdots &\ddots &\vdots\\
0 &0 &\ldots &1
\end{bmatrix}
\begin{bmatrix}
\left( \hat{\xi}_{Y1}^{(d)} \right)^{T}\\
\left( \hat{\xi}_{Y2}^{(d)} \right)^{T}\\
\vdots\\
\left( \hat{\xi}_{Yn}^{(d)} \right)^{T}
\end{bmatrix}\right\|_{F}^{2}  \mathop{\longrightarrow}_{d \to \infty}^{P} 0.
\end{align}

\subsubsection{Behavior of sample canonical weight vectors}

Use the equation~(\ref{sec3:equ28}) for a sample canonical weight vector $\hat{\psi}_{Xi}^{(d)}$ represented as in~(\ref{sec3:equ26}) for a given $i$. We know from Lemma~\ref{lem14} that the magnitude of $d/\sqrt{\hat{\lambda}_{Xi}^{(d)}}$, for $i=1,2,\dots,n$, is of $(\asymp O_{P}(1))$ and that $\langle e_{1}^{(d)}, \hat{\xi}_{Xi}^{(d)} \rangle$ and $\langle e_{2}^{(d)}, \hat{\xi}_{Xi}^{(d)} \rangle$, for $i=1,2,\dots,n$, converge in probability to 0 as $d \to \infty$, which leads to,
\begin{align*}
\left( \langle \hat{\psi}_{Xi}^{(d)}, \psi_{X1}^{(d)} \rangle - \frac{\cos \theta_{X} \left( \sum\limits_{j=1}^{n} \frac{da_{j}^{(d)}}{\sqrt{\hat{\lambda}_{Xj}^{(d)}}} \langle e_{1}^{(d)}, \hat{\xi}_{Xj}^{(d)} \rangle \right)+\sin \theta_{X} \left( \sum\limits_{j=1}^{n} \frac{da_{j}^{(d)}}{\sqrt{\hat{\lambda}_{Xj}^{(d)}}} \langle e_{1}^{(d)}, \hat{\xi}_{Xj}^{(d)} \rangle \right)}{\left \| \sum\limits_{j=1}^{n} \frac{da_{j}^{(d)}}{\sqrt{\hat{\lambda}_{Xj}^{(d)}}} \hat{\xi}_{Xj}^{(d)} \right\|_{2}} \right)^{2} \mathop{\longrightarrow}_{d \to \infty}^{P} 0.
\end{align*}
Hence, we have,
\begin{align*}
\left( \langle \hat{\psi}_{Xi}^{(d)}, \psi_{X1}^{(d)} \rangle - 0 \right)^{2} \mathop{\longrightarrow}_{d \to \infty}^{P} 0, \,\, i=1,2,\dots,n.
\end{align*}
Similarly,
\begin{align*}
\left( \langle \hat{\psi}_{Yi}^{(d)}, \psi_{Y1}^{(d)} \rangle - 0 \right)^{2} \mathop{\longrightarrow}_{d \to \infty}^{P} 0, \,\, i=1,2,\dots,n.
\end{align*}

\subsubsection{Behavior of sample canonical correlation coefficients}

The asymptotic $i$th sample canonical correlation coefficients is found as the $i$th singular value of the matrix~(\ref{sec3:equ33}) under the limiting operation of $d \to \infty$,
\begin{align*}
\hat{\rho}_{i}\mathop{\longrightarrow}_{d \to \infty}^{P}1, \,\, i=1,2,\dots,n.
\end{align*}

\section{Simulation}

Simulation study in this section aims at verifing the asymptotic behavior of sample canonical correlation coefficients and their corresponding weight vectors given in the main theorom~\ref{thm1} as dimension $d$ grows with sample size $n$ fixed. We first state the parameter settings to be used. For the spiked covariance structures of the random variables $X^{(d)}$ and $Y^{(d)}$ described in~(\ref{sec3:equ6}) and~(\ref{sec3:equ7}), we set $\sigma_{X}^{2}=\tau_{X}^{(d)}=\sigma_{Y}^{2}=\tau_{Y}^{(d)}=1$. The population caconical weight vectors described in~(\ref{sec3:equ13}) and population caconical correlation coefficient are set to be,
\begin{figure}[t!]
\centering
\includegraphics[width=14cm, height=16cm]{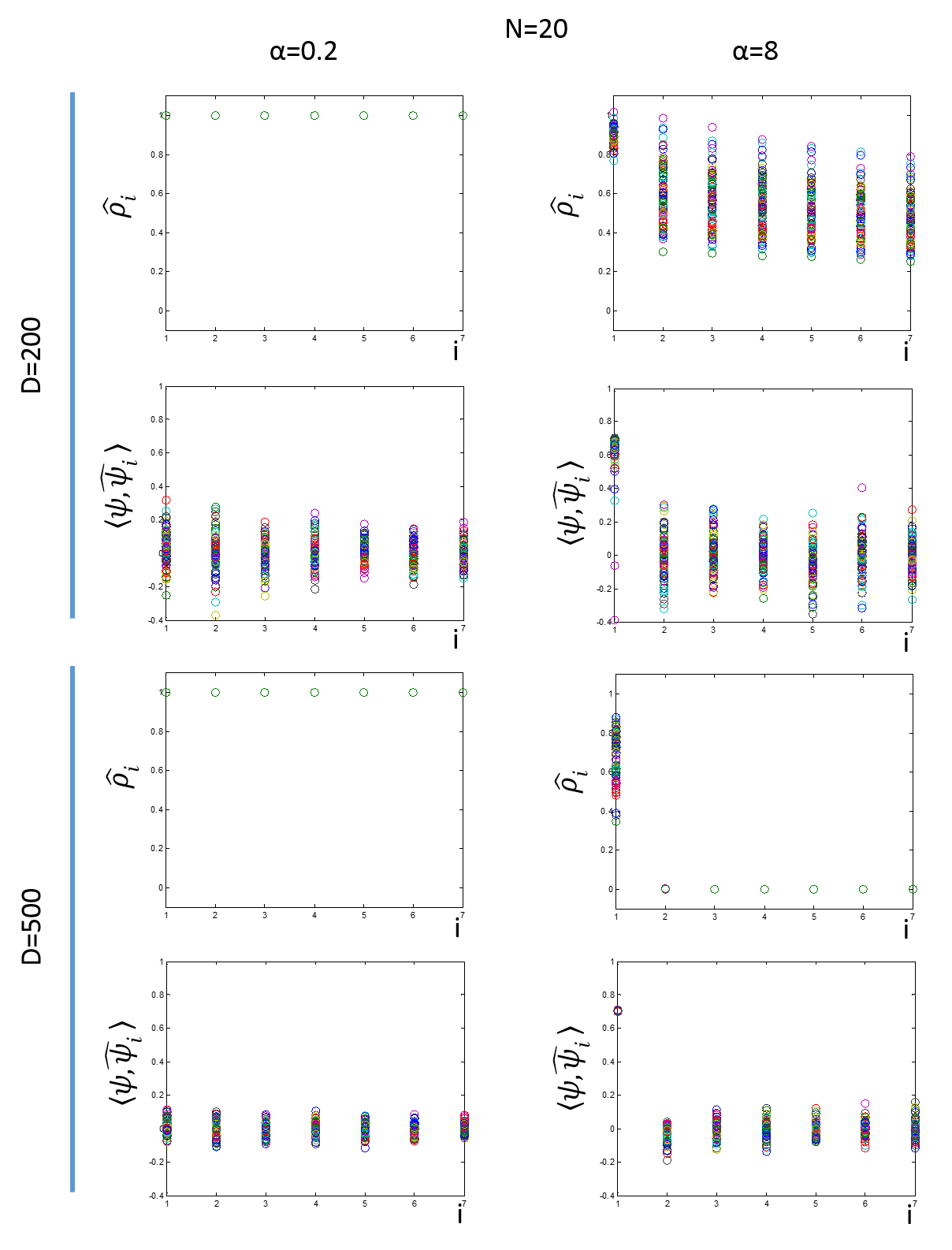}
\caption{Estimated sample canonical correlation coefficients $\hat{\rho}_{i}^{(d)}$ and inner products of the sample left canonical weight vectors $\hat{\psi}_{Xi}^{(d)}$ and the population canonical weight vector $\psi_{Xi}^{(d)}$, for $i=1,2,\dots,5$, obtained from 100 repetitions of simulations for different settings of dimension $d$ and exponent $\alpha$ with a sample size of $n=20$.}
\label{sec3:fig1}
\end{figure}\begin{figure}[t!]
\centering
\includegraphics[width=14cm, height=16cm]{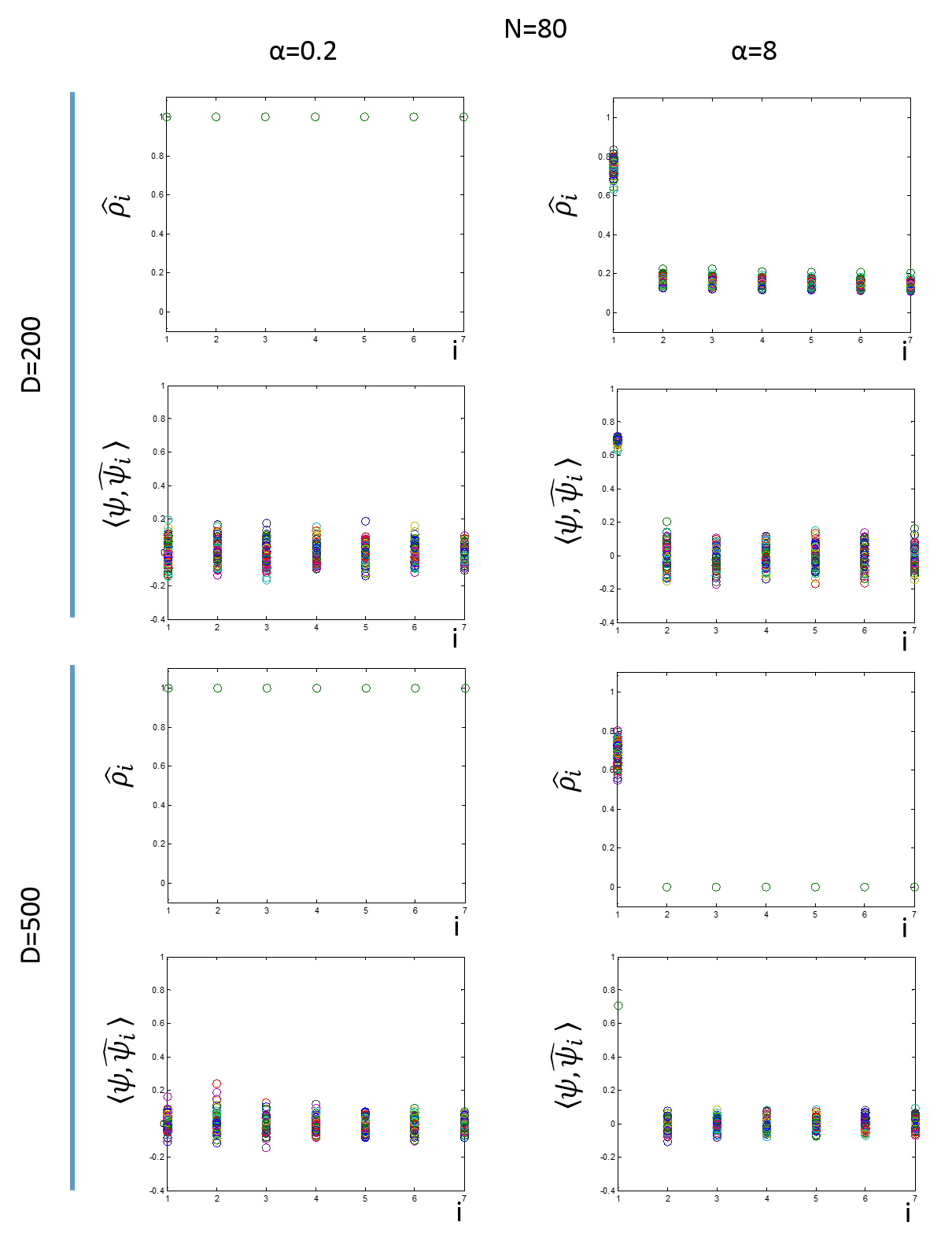}
\caption{100 estimated sample canonical correlation coefficients $\hat{\rho}_{i}^{(d)}$ and inner products of the sample left canonical weight vectors $\hat{\psi}_{Xi}^{(d)}$ and the population canonical weight vector $\psi_{Xi}^{(d)}$, for $i=1,2,\dots,5$, obtained from 100 repetitions of simulations for different settings of dimension $d$ and exponent $\alpha$ with a sample size of $n=80$.}
\label{sec3:fig2}
\end{figure}
\begin{align*}
\psi_{X}^{(d)}=(\cos 0.75\pi) e_{1}^{(d)} + (\sin 0.75\pi) e_{2}^{(d)}, \,\, \psi_{Y}^{(d)}=(\cos 0.75\pi) e_{1}^{(d)} + (\sin 0.75\pi) e_{2}^{(d)}, \,\, \rho=0.7.
\end{align*}
Note that $\langle \psi_{X}^{(d)}, e_{1}^{(d)} \rangle=\langle \psi_{Y}^{(d)}, e_{1}^{(d)} \rangle=\cos 0.75\pi=0.7071$, which implies that the angle between $\psi_{X}^{(d)}$ and $e_{1}^{(d)}$ is $135^{\circ}$. The population cross-covariance structure of $X^{(d)}$ and $Y^{(d)}$ can be accordingly defined as in~(\ref{sec3:equ8}). We perform 100 runs of simulations for each combination of different values of the following three sets,
\begin{itemize}
\item Sample size $n \in \{20,80\}$,
\item Dimension $d \in \{200,500\}$,
\item Exponent $\alpha \in \{0.2, 8\}$.
\end{itemize}
Each case, estimates of the first 5 canonical correlation coefficients $\hat{\rho}_{i}^{(d)}$ and their corresponding canonical weight vectors $\hat{\psi}_{Xi}^{(d)}$ and $\hat{\psi}_{Yi}^{(d)}$ are obtained. The estimated vectors $\hat{\psi}_{Xi}^{(d)}$ and $\hat{\psi}_{Yi}^{(d)}$, for $i=1,2,\dots,5$, are compared to the population canonical weight vector $\psi_{X}^{(d)}$ using their inner product. Here, we do not include results of $\hat{\psi}_{Yi}^{(d)}$ as they are similar as those of $\hat{\psi}_{Xi}^{(d)}$.

Figure~\ref{sec3:fig1} presents the simulation results for a small sample size of $n=20$. For $\alpha=0.2$, sample coefficients and vectors are almost of no use as the estimated vectors tend to be as far away as possible from the popultion direction (implied in the inner products of 0) with always perfect correlation. When $\alpha$ increases to a high strength of 8, the first sample coefficient $\hat{\rho}_{1}^{(d)}$ approachs to the population direction whereas the rest degenerate to 0 as $d \to \infty$. The first left sample canonical weight vector $\hat{\psi}_{X1}^{(d)}$ converges to the direction $e_{1}^{(d)}$ (implied in the inner products of $\cos 0.75\pi$) containing dominant variability as $d \to \infty$ and the rest carry no information on the population direction with tending to deviate from it by a highest degree of $90^{\circ}$.
Figure~\ref{sec3:fig2} illustrates the results for a larger sample size of $n=80$. For the case of $\alpha=0.2$, the behavior of $\hat{\rho}_{i}^{(d)}$ and $\hat{\psi}_{Xi}^{(d)}$ is similar as that in a small sample size case. However, for $\alpha=8$, we see a noticeable decrease in variability of the first sample canonical correlation coefficient $\hat{\rho}_{1}^{(d)}$ around a true value of 0.7 and of the rest of $\hat{\rho}_{i}^{(d)}$ around 0. This implies that the usual large sample theory works for $\hat{\rho}_{1}^{(d)}$. Diminishing variability is also observed for the sample canonical weight vectors $\hat{\psi}_{Xi}^{(d)}$, where the first sample vector $\hat{\psi}_{Xi}^{(d)}$ becomes almost identical to the largest variance direction $e_{1}^{(d)}$ and the rest diverge from the population canonical direction $\psi_{X}^{(d)}$.

\section{Discussion}

A natural question arises about what asymptotic behavior of sample canonical weight vectors we can expect at the boundary case of $\alpha=1$ as $d \to \infty$ with a sample size fixed at $n$ ? When $\alpha > 1$, we saw that the angle between the first sample canonical weight vector and its population counterpart degenerates to 0 and when $\alpha < 1$, the angle between them diverges by as large as $\pi/2$. We conjecture that, with $\alpha=1$, the angle formed by the first sample canonical weight vector and its population counterpart converges weakly to some random variable on the support of $[0,\pi/2]$. We leave an investigation of this conjecture for future work.  

\bibliography{mybibfile}
\end{document}